\theoremstyle{plain}
\newtheorem{teo}{Theorem}[section] \newtheorem{theo}[teo]{Theorem}
 \newtheorem{lema}[teo]{Lemma}
\theoremstyle{remark}
\newtheorem{rema}[teo]{Remark}
\theoremstyle{definition}
\newtheorem{defi}[teo]{Definition} \newtheorem{obse}[teo]{Observation}
\newtheorem{example}[teo]{Example}
\renewcommand{\Perp}{\bot\!\!\!\bot}
\newcommand{\bbot}{\Perp}
 \renewcommand{\t}{t}
\newcommand{\koca}{\mathcal{{}^KOCA}} \newcommand{\oca}{\mathcal{OCA}}
\newcommand{\qoca}{\mathcal{{}^QOCA}} 
\renewcommand{\clubsuit}{\ensuremath{\Box}}
\newcommand{\clubsuitLittle}{\ensuremath{\Box}}
\newcommand{\cs}{{\scalebox{0.9}{$\mathsf {s}$\,}}}
\newcommand{\ck}{{\scalebox{0.9}{$\mathsf {k}$\,}}}
\newcommand{\ci}{{\scalebox{0.9}{$\mathsf {i}$\,}}}
\newcommand{\ce}{{\scalebox{0.9}{$\mathsf {e}$\,}}}
\newcommand{\cp}{{\scalebox{0.9}{$\mathsf {p}$\,}}}
\newcommand{\cS}{{\bf \mathsf {\scriptstyle{S}}\,}}
\newcommand{\cK}{{\bf \mathsf {\scriptstyle{K}}\,}}
\newcommand{\cB}{{\bf \mathsf {\scriptstyle{B}}\,}}
\newcommand{\cI}{{\bf \mathsf {\scriptstyle{I}}\,}}
\newcommand{\cE}{{\bf \mathsf {\scriptstyle{E}}\,}}
\newcommand{\ioca}{\ensuremath{\mathcal{{}^IOCA}}}
\newcommand{\foca}{\ensuremath{\mathcal{{}^FOCA}}}
\newcommand{\rapp}{\mathrm{app}} \newcommand{\rimp}{\mathrm{imp}}
\renewcommand{\int}{\operatorname{int}}
 \newcommand{\id}{\operatorname{id}}
\newcommand{\rpaxk}{(PK)}
\newcommand{\rpaxs}{(\hspace*{1pt}P\hspace*{.5pt}S\hspace*{.5pt})}
 \newcommand{\rpaxha}{(\hspace*{.5pt}PA)}
\begin{document}

\begin{abstract} In the context of the $\oca$ associated to an 
${\mathcal{AKS}}$ we introduce a closure operator and two associated
  maps that replace the closure and the maps defined in
  \cite{kn:ocar}. We were motivated by the search of a full adjunction
  to the original implication map. We show that all the constructions
  from $\oca$s to triposes developped in \cite{kn:ocar} can be also
  implemented in the new situation.

\end{abstract}
\title[Products in Abstract Krivine Structures and Ordered Combinatory Algebras.\\] {Realizability in $\oca${\tiny s} and
  ${\mathcal {AKS}}${\tiny s}} \author{Walter Ferrer Santos}
\address{Facultad de Ciencias\\Universidad de la Rep\'ublica\\ Igu\'a
  4225\\11400. Montevideo\\Uruguay\\} \email{wrferrer@cmat.edu.uy}
 
\author{Mauricio Guillermo} \address{Facultad de
  Ingenier\'ia\\Universidad de la Rep\'ublica\\ J. Herrera y Reissig
  565 \\ 11300. Montevideo \\ Uruguay\\} \email{mguille@fing.edu.uy}
\author{Octavio Malherbe} \address{Facultad de
  Ingenier\'ia\\Universidad de la Rep\'ublica\\ J. Herrera y Reissig
  565 \\ 11300. Montevideo \\ Uruguay\\} \email{malherbe@fing.edu.uy}
\thanks{The authors would like to thank Anii/FCE, EI/UdelaR and
  LIA/IFUM for their partial support} \today \maketitle
\section{Introduction}
\newcounter{bourbaki}
\renewcommand{\thebourbaki}{{\sf{\Roman{bourbaki}}}}
\begin{list}{\large{\sf{\Roman{bourbaki}.}}}{\usecounter{bourbaki}}
\bigskip
\item\label{item:intro} In this report we revisit some aspects of the
  important construction presented in the paper: \emph{Krivine's
    Classical Realizability from a Categorical Perspective} by Thomas
  Streicher --see \cite{kn:streicher}--.

\noindent
We put special emphasis in the algebrization of the structure of an
${\mathcal {AKS}}$ and the associated $\oca$ --see \cite{kn:report} or
\cite{kn:streicher} and the definitions and notations therein--.

\noindent
Streicher's paper points towards interpreting the classical
realizability of Krivine, as an instance of the categorical approach
started by Hyland--see \cite{kn:kr2001};\cite{kn:kr2003};
\cite{kn:kr2008}; \cite{kn:kr2009} for Krivine's approach and
\cite{kn:hyland} and \cite{kn:vanOosbook} for the categorical
approach. The present paper concentrates upon the basic algebraic set
up of these important categorical aspects of the theory.  The
categorical aspects will be considered in future work by the authors.
\bigskip
\item 
\vspace*{.2cm}
\noindent
In what follows we describe briefly the contents of each of the
sections of the current paper.  

\vspace*{.2cm}
\noindent
In \emph{Section
  \ref{section:ordered}} and in order to fix notations, we recall
general concepts on ordered structures and indexed ordered
structures. In particular, we write the definitions of preorder, meet
semilattice and Heyting preorder and its indexed versions i.e.,
contravariant functors from the category ${\bf Set}$ --of sets-- with
codomain the adequate subcategory of the category ${\bf Cat}$ --of all
categories --(e.g. the category of preorders, of meet semilatices, of
Heyting preorders). As a particular case of indexed Heyting preorders
we recall the crucial definition of tripos (see\cite{kn:hyland}).
\noindent
At the end, and as a tool for later usage, we briefly recall
how to program directly in an $\oca$ using the standard
codification in combinatory algebras (combinatory completeness).

\vspace*{.2cm}
\noindent
In \emph{Section \ref{section:oca}} we present the classes of ordered
combinatory algebras that we need in the paper, \emph{quasi
  implicative ordered combinatory}: $\qoca$s, \emph{implicative
  ordered combinatory algebras}: $\ioca$s and \emph{full adjunction
  implicative ordered combinatory algebras}: $\foca$s. We use the
completeness properties of $\mathcal A$ --the $\oca$ we are
considering-- to define a new product $\sharp: A \times A \rightarrow
A$ that is bounded bellow by the application. We also show how to
define realizability in high order arithmetics for $\foca$s (almost in
the same manner that we did in \cite{kn:ocar}, Section 6 for
$\ioca$s).

\vspace*{.2cm}
\noindent
In \emph{Section \ref{section:aksrl}} in preparation fot the definition of
Abstract Krivine Structure we introduce \emph{realizability lattices}
and some operations between the sets of terms and stacks of the
lattice. 

\noindent
We introduce the definition by steps. First we consider a pair of sets
equipped with a subset of its cartesian product and the induced
\emph{polarity} on the family of subsets of the original sets. We
consider two closure operators associated to the polarity and the
corresponding sets of closed subsets which are: the double
perpendicular introduced by Streicher in \cite{kn:streicher} and
another smaller one that we call $(-)\,\,{\widehat{}}$\,\,.  Then, if there is
an additional $\emph{push}$ map we say that we have a
\emph{realizability lattice} --similarly to the situation treated in
\cite[Section 2. Streicher's Abstract Krivine Structures]{kn:ocar}. We
define the basic operations of \emph{conductor} and \emph{push}--which
are the predecessors of the \emph{application} and the
\emph{implication} of the ${\mathcal{OCA}}$ and that are defined
between subsets of the set of stacks. All in all we define three
conductors and three push operations --including the ones used by
Streicher--, and the introduction of these operations will be
profitable for the understanding of the adjunction relations for
Streicher's operations, because the new operations are better behaved
with respect to adjunction than the original one.

\vspace*{.2cm}
\noindent
In \emph{Section \ref{section:opcomb}}, we add the remaining pieces of the
structure: the additional application operation in the set of terms,
the subset of terms called the set of quasi--proofs with its
distinguished elements, and the axioms relating the application with
the push through perpendicularity.

\noindent
We define the last operation, the application--like operation --called
the \emph{square} product and denoted as $\Box$-- and a distinguished
term called the ``adjunctor''. The adjunctor should be viewed as a
functor that will allow --partially, or in a ``lax'' sense-- the
recuperation of the \emph{full} adjunction property between the
conductor --application-- and the push --implication--, connecting
thus with Streicher's perspective in \cite{kn:streicher}.  Later, we
use the three pairs of application/implication--like operations to
construct different ${\mathcal {OCA}}$s starting from the same
${\mathcal {AKS}}$.  Only in Streicher's construction one needs the
adjunctor, and for this reason, the new $\oca$ --that we call
${\mathcal A}_{{\mathcal K},\bullet}$-- and that has stronger
adjunction properties, might be a good candidate to construct the
tripos we are searching for, in a simpler manner than the original
appearing in \cite{kn:streicher}.

%\vspace*{.2cm}
%\noindent 
%In \emph{Section \ref{section:akstohpo}} we begin by showing how the
%intermediate construction described in \cite{kn:ocar} between an
%$\mathcal{AKS}$ and a Heyting preorder (viewed as a category), can be
%performed in our new setting using a $\foca$ instead of a
%$\koca$. After reviewing our old construction we discuss strengths and
%benefits added by this new category whose objects are precisely
%described in Theorem \ref{theo:akstoocabullet}. The given
%$\operatorname{AKS}$ generates two equivalent (but not ispreordes that
%different preodifferent solutions of our fundamental problem it
%produces equivalent preorders. Next, we consider the opposite
%direction of this correspondence. We compare iteraded constructions
%that lead us to a Galois connection..

\vspace*{.2cm}
\noindent 
The content of \emph{Section \ref{section:akstohpo}} and \emph{Section
  \ref{section:constripos}} can be described using the diagram below
that summarizes the constructions appearing in \cite{kn:ocar}:

\[\xymatrix{\mathcal{AKS} \ar[rr] \ar[rd]&&\mathcal{{}^{\mathcal I}OCA}\ar[ld]\\
 &\bf {HPO}&}\] 
\noindent
 
In this diagram the concept of $\ioca$ and the construction of the right
diagonal map appears in \cite{kn:ocar}, the horizontal arrow
represents the construction of a $\ioca$ using the closure operator
associated to $\perp$ and the left diagonal arrow is Streicher's
construction as it appears in \cite{kn:streicher}.

Along these two sections, we reproduce the triangle of constructions
described above with the  modifications we mention below.

The $\ioca$s are substitued by the $\foca$s where no adjunctor is
needed and the new right diagonal arrow is formally the same
construction that the one in \cite{kn:ocar}, that can be performed in
the same manner thanks to presence of the filter. Moreover, in this paper the
horizontal arrow is the construction of a $\foca$ using the
closure operator $(-)\,\,\widehat{}$\,\,\,as described before.

Concerning the passage from ordered structures to triposes, that is
the main content of Section \ref{section:constripos} we need to
describe one more construction that is a crucial part of the theory
developped in \cite{kn:ocar}.  The upper arrow in the 
diagram below consists of the
building of an $\mathcal{AKS}$ from a $\ioca$ that was
performed in\,\cite[Section 5]{kn:ocar}.

\[\xymatrix{\mathcal{AKS} \ar@/_.5pc/[rr]
  \ar[rd]&&\mathcal{\ioca}\ar[ld]\ar@/_.5pc/[ll]_{ {\mathcal
      K}_{\mathcal A}\leftarrow {\mathcal A}}\\ &{\bf HPO}&}\]

The content of \cite[Theorem 5.16]{kn:ocar}, is the following: if we
start from $\mathcal A \in \ioca$, take $\mathcal K_{\mathcal
  A}$ and perform on it Streicher's construction, the associated
tripos is equivalent to the one obtained directly from $\mathcal A$. 

In Theorem \ref{theo:equivtripos} we prove a version of the above
mentioned result starting now from a $\foca$ and performing similar
steps to the point where we prove an analogous result concerning the
equivalence of the associated triposes.
\bigskip

\item 
\vspace*{.2cm}
\noindent
In future work we intend to explore the use of the above methods in
order to explore the basic foundational pillars of the categorical
approach to realizability.
\bigskip

\section{Ordered structures, indexed ordered structures, triposes}
\label{section:ordered}
In this section --in order to fix notations-- we recall besides the
basic notion of tripos some general definitions related to ordered
structures and indexed ordered structures.

 \item \label{item:recall} We list some definitions and basic results. 
\begin{defi}
A preorder $(D,\leq)$ is a pair of a set $D$ with a reflexive and
transitive relation $\leq$. A morphism of preorders is a monotonic
map between the sets.
%A \emph{monotone map} between the preorders $(D,\leq)$ and $(E,\leq)$
%  is a function $f:D\to E$ such that $d\leq d'$ implies $f(d)\leq
%  f(d')$ for all $d,d'\in D$.
\begin{enumerate}
\item If $d\leq d'$, and $d'\leq d$, with $d,\,d'\in D$ we say that
  $d$ and $d'$ are \emph{isomorphic}, and write $d\cong d'$. 
\item If $(C,\leq)$ and $(D,\leq)$ are two preorders and
  $f,g:(C,\leq)\to(D,\leq)$ are morphisms of preorders, we say that
  $f\leq g :\Leftrightarrow \forall d\in D, f(d)\leq g(d)$ and that
  $f$ and $g$ are \emph{isomorphic} ($f\cong g$) if $f\leq g$ and
  $g\leq f$. A monotonic map $f:(C,\leq)\to(D,\leq)$ is an
  \emph{equivalence}, if there exists $g:(D,\leq)\to(C,\leq)$ monotonic
  such that $g\circ f\cong \id_C$, and $f\circ g\cong\id_D$. The map
  $g$ is called a \emph{weak inverse} of $f$. In this case we say that
  $(C,\leq)$ and $(D,\leq)$ are \emph{equivalent} (denoted as
  $(C,\leq)\simeq (D.\leq)$).    
\item Given monotonic maps
  $f:(C,\leq)\to(D,\leq)$, $g:(D,\leq)\to(C,\leq)$, we say that ``$f$
  is left adjoint to $g$'', or ``$g$ is right adjoint to $f$'', and
  write $f \dashv g$, if $\id_C\leq g\circ f$ and $f\circ g\leq
  \id_D$. 
\item We call ${\bf{Ord}}$ the category of preorders and, 
\emph{order preserving maps}, i.e.\ monotonic maps.
\item 
\begin{enumerate}
\item 
A \emph{principal filter} in $(D,\leq)$ is a subset of $D$ of the form
${\uparrow}d_0:=\{d \in D: d_0 \leq d\}$, for some $d_0\in D$.
\item
Dually, a \emph{principal ideal} in $D$ is a subset of the form
${\downarrow} d_0:=\{d\in D: d\leq d_0\}$, for $d_0\in D$. 
\end{enumerate}
\end{enumerate}
\end{defi}
\begin{obse}\label{obse:essentiallysur}\  
\begin{enumerate}
\item Given monotonic maps
  $f:(C,\leq)\to(D,\leq)$, $g:(D,\leq)\to(C,\leq)$, $f$ is left
  adjoint to $g$ if and only if 
  $\forall c\in C,\, d\in D,  f(c)\leq d \Leftrightarrow c\leq g(d)$.
\item We will use the following standard result in order theory: 

\noindent A monotonic map $f:(C,\leq)\to(D,\leq)$ is 
an equivalence if and only if: 
\begin{itemize}
\item $f$ is \emph{order reflecting}: $\forall
c,c'\in C, f(c)\leq f(c')\Rightarrow c\leq c'$
\item $f$ is \emph{essentially surjective}: $\forall d\in D\;\exists
  c\in C, 
  f(c)\cong d$. 
\end{itemize}
\item Concerning principal filters and principal ideals we have that:
$\inf({\uparrow}d_0)=d_0=\sup({\downarrow}d_0)$.  
\item The above standard notion of filter, is different from the one
  we use for $\oca$s later.
\end{enumerate}
\end{obse}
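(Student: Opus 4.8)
The plan is to read the Observation as a bundle of three standard order-theoretic facts — items (1)--(3), since item (4) is only a terminological warning and requires no argument — and to verify each one by unwinding the definitions from the preceding Definition, bearing in mind throughout that we work in preorders, so that every equality of maps or of elements must be read as the isomorphism relation $\cong$ and every $\inf/\sup$ is determined only up to $\cong$.

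For item (1) I would prove both implications directly. Assuming $f \dashv g$, i.e.\ $\id_C \leq g \circ f$ and $f \circ g \leq \id_D$: if $f(c) \leq d$, applying the monotone $g$ and prepending $c \leq g(f(c))$ yields $c \leq g(d)$; symmetrically, if $c \leq g(d)$, applying the monotone $f$ and appending $f(g(d)) \leq d$ yields $f(c) \leq d$. For the converse, instantiating the hom-condition at $d = f(c)$ and at $c = g(d)$ (using reflexivity on the right-hand inequality in each case) recovers exactly the two inequalities $\id_C \leq g \circ f$ and $f \circ g \leq \id_D$. The only care needed is to keep track of the monotonicity of the maps being applied.

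For item (2), the forward direction is again a short computation from a chosen weak inverse $g$: order-reflection follows by applying $g$ to $f(c) \leq f(c')$ and sandwiching with $g \circ f \cong \id_C$, and essential surjectivity follows by exhibiting $g(d)$ as a preimage of $d$ up to $\cong$ via $f \circ g \cong \id_D$. The substantive direction is the construction of a weak inverse from the two hypotheses: using essential surjectivity (and a choice of witness for each $d \in D$) set $g(d)$ to be some $c$ with $f(c) \cong d$; then $f \circ g \cong \id_D$ is immediate, while both $g \circ f \cong \id_C$ and the monotonicity of $g$ reduce — after applying $f$ and invoking order-reflection — to statements that hold by the very definition of $g$. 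I expect this to be the main point of the proof: it is where the choice of witnesses enters and where order-reflection does genuine work, being precisely what allows one to transport $\leq$ and $\cong$ back through $f$.

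For item (3) I would simply note that $d_0 \in {\uparrow}d_0$ by reflexivity and that $d_0$ is a lower bound of ${\uparrow}d_0$ by definition of ${\uparrow}d_0$, so any lower bound of ${\uparrow}d_0$ is $\leq d_0$; hence $d_0$ is a greatest lower bound, i.e.\ $\inf({\uparrow}d_0) = d_0$, and the identity $\sup({\downarrow}d_0) = d_0$ is dual. There is no obstacle here; this part is purely a matter of spelling out the definitions.
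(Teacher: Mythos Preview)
Your proposal is correct and complete. The paper itself provides no proof of this Observation: it is stated as a collection of standard order-theoretic facts and left unproved, so your write-up actually supplies more detail than the original text does.
\medskip

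One minor remark on item (3): you might add that the argument uses only reflexivity and the defining property of ${\uparrow}d_0$, so it goes through unchanged in a preorder (with $\inf$ understood up to $\cong$), which is the setting the paper works in; but you already flagged this caveat in your opening paragraph, so nothing is missing.
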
 
\begin{defi} A \emph{meet semi-lattice} is a preorder $(A,\leq)$ equipped
  with a binary operation $\wedge$ and a distinguished element $\top$
  such that for all $a,b,c\in C$:
\begin{enumerate}
\item \label{item:meet1} $a\wedge b\leq a$; 
\item \label{item:meet2} $a\wedge b\leq b$; 
\item \label{item:meet3} $c\leq a\textrm{ and } c\leq b \Rightarrow c\leq
  a\wedge b$;
\item \label{item:top} $a\leq \top$. 
\end{enumerate}
We call ${\bf{SLat}}$ the category of meet semi-lattices,
and \emph{meet preserving monotonic maps}, i.e.\ monotonic maps
$f:(C,\leq)\to (D,\leq)$ such that $f(c)\wedge f(c')\cong f(c\wedge c')$ and $f(\top)\cong \top$.
\end{defi}

\begin{defi}\  
\begin{enumerate}
\item A \emph{Heyting preorder} is a meet semi-lattice $(A,\leq)$ with a
  binary operation $\to:A\times A\to A$ (called \emph{Heyting
    implication}) satisfying: 
\begin{equation}\label{eq:heyting-implication} \mbox{for all }
  a,b,c\in A\quad 
  a\wedge b\leq
c \,\,\,{\mathrm{if\,\,and\,\,only\,\,if}}\,\,\, a\leq b\to c
\end{equation} 
\item A \emph{morphism of Heyting preorders} is a monotonic map
  $f:(A,\leq)\to (B,\leq)$ such that $f(\top)\cong\top$, $f(a\wedge b)
  \cong f(a)\wedge f(b)$, $f(a\to b)\cong f(a)\to f(b)$ for all
  $a,b\in A$. 
\end{enumerate} 
We call ${\bf{HPO}}$, the category of
Heyting preorders and its morphisms.
\end{defi}
\begin{obse}
It follows from the definition above that in a Heyting preorder the
meet is monotonic in both arguments and the implication is 
antitonic in the first argument and monotonic in the second one.
\end{obse}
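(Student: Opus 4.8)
The plan is to reduce everything to three ingredients: the universal property of the meet (axioms (\ref{item:meet1})--(\ref{item:meet3})), the Heyting adjunction (\ref{eq:heyting-implication}), and transitivity of $\leq$. I would not invoke commutativity of $\wedge$ (which is in any case derivable from (\ref{item:meet1})--(\ref{item:meet3})); I only need to keep track of the side on which each factor sits.

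First I would prove that $\wedge$ is monotonic in each argument. Suppose $a\leq a'$. From $a\wedge b\leq a$ and $a\leq a'$ we get $a\wedge b\leq a'$ by transitivity, while $a\wedge b\leq b$ holds by axiom (\ref{item:meet2}); axiom (\ref{item:meet3}) then yields $a\wedge b\leq a'\wedge b$. The symmetric argument, using $a\wedge b\leq b\leq b'$ together with $a\wedge b\leq a$, gives $a\wedge b\leq a\wedge b'$ whenever $b\leq b'$.

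Next I would record the ``evaluation'' inequality $(b\to c)\wedge b\leq c$, obtained by feeding the reflexivity instance $b\to c\leq b\to c$ into (\ref{eq:heyting-implication}) with first meet--factor $a:=b\to c$. With this in hand, antitonicity of $\to$ in the first argument follows: if $b\leq b'$, then monotonicity of $\wedge$ in its second argument gives $(b'\to c)\wedge b\leq (b'\to c)\wedge b'$, and evaluation (for $b'$) gives $(b'\to c)\wedge b'\leq c$; composing, $(b'\to c)\wedge b\leq c$, whence (\ref{eq:heyting-implication}) read right-to-left, with $a:=b'\to c$, produces $b'\to c\leq b\to c$. Monotonicity in the second argument is shorter still: if $c\leq c'$, then $(b\to c)\wedge b\leq c\leq c'$ by evaluation and transitivity, so (\ref{eq:heyting-implication}) gives $b\to c\leq b\to c'$.

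I do not expect any genuine obstacle here; the only point requiring care is the bookkeeping of the order of the factors of $\wedge$, so that each use of (\ref{eq:heyting-implication}) and of monotonicity of $\wedge$ is applied to the intended argument. As an alternative one could derive all four statements abstractly from the fact that $(-)\wedge b\dashv (b\to -)$ is an adjunction together with functoriality of $b\mapsto (b\to -)$, but the direct computation sketched above is the most economical.
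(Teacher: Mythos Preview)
Your argument is correct and is the standard way to derive these monotonicity facts from the meet axioms and the Heyting adjunction. In the paper this statement is recorded as an \emph{observation} with no proof supplied, so there is nothing to compare against; your write-up fills in exactly the routine verification the authors left implicit.
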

\item Next we generalize the definitions above, to the situation of
  indexed structures. We write down the definitions for the category
  of indexed preorders--for the other structures the definitions are
  similar (see \cite{kn:ocar}).
\begin{defi} An indexed category is a functor ${\bf F}:{\bf
    {Set}}^{\operatorname {op}}\rightarrow {\bf Cat}$, where ${\bf Cat}$
  is the category of all categories.
\end{defi} 
\begin{example} Some examples of indexed categories
    that we use in this work are the following --with special
    codomains inside $\bf Cat$:
\begin{enumerate}
\item An \emph{indexed preorder} is a functor
${\bf F}:{\bf {Set}}^{\operatorname {op}}\rightarrow {\bf{Ord}}$. 
\item An \emph{indexed meet-semi-lattice} is a functor
${\bf L}:{\bf {Set}}^{\operatorname {op}}\rightarrow {\bf{SLat}}$.
\item An \emph{indexed Heyting preorder} is a functor
${\bf P}:{\bf {Set}}^{\operatorname {op}}\rightarrow {\bf{HPO}}$.
\end{enumerate}

\end{example}
\begin{defi} Given indexed preorders 
${\bf C},{\bf D}:{\bf {Set}}^{\operatorname {op}}\rightarrow
  {\bf{Ord}}$ an \emph{indexed monotonic map} $\sigma:{\bf C} 
  {\rightarrow}{\bf D}$ is a family
$\sigma_I:{\bf C}(I)\to {\bf D}(I) \quad (I\in {\bf{Set}})$ of
  monotonic functions, such that for all functions $f:J{\to}I$ and 
predicates 
$\varphi\in{\bf C}(I)$: 
\begin{equation}\label{eq:pseudo} \sigma_J(f^*(\varphi))\cong
f^*(\sigma_I(\varphi))
\end{equation}
where the symbol $f^*$ denotes ${\bf C}(f)$ when on the left side of the equation and
  ${\bf D}(f)$ when on 
the right side.  
\end{defi}

\begin{defi} Let ${\bf C},{\bf D}:{\bf {Set}}^{\operatorname {op}}\rightarrow {\bf{Ord}}$ be indexed preorders.
\begin{enumerate}
\item For indexed monotonic maps $\sigma,\tau:{\bf C}\to{\bf
  D}$, we define
\[ \sigma\leq\tau\Leftrightarrow\forall I\in {\bf Set}\quad
\sigma_I\leq\tau_I \Leftrightarrow \forall I\in {\bf Set}
\,\,\text{and}\,\, \forall d \in {\bf C}(I)\,,\, \sigma_I(d) \leq
\tau_I(d).\] We say that $\sigma$ and $\tau$ are \emph{isomorphic},
and write $\sigma\cong \tau$, if $\sigma\leq \tau$ and $\tau\leq
\sigma$.
\item An indexed monotonic map $\sigma :{\bf C}\to{\bf D}$
  is called an \emph{equivalence}, if there exists a indexed monotonic
  map $\tau :{\bf D}\to {\bf C}$ such that $\tau \circ
  \sigma\cong \id_{\bf C}$, and $\sigma \circ
  \tau\cong\id_{\bf D}$. In this case, $\tau $ is called an
  \emph{(indexed) weak inverse} of $\sigma $.
\item We say that ${\bf C}$ and ${\bf D}$ are \emph{equivalent}, and write
${\bf C}\simeq{\bf D}$, if there exists an equivalence $\sigma
:{\bf C}\to{\bf D}$.
\end{enumerate}
\end{defi}
A proof of the next Lemma follows from the considerations appearing at
begining of Paragraph~\ref{item:recall}. 
\begin{lema}\label{lem:equiv-iord} An indexed monotonic map
$\sigma:{\bf C}\to{\bf D}$ is an equivalence, if and only if
  for every set $I$, the monotonic map $\sigma_I:{\bf
    C}(I)\to{\bf D}(I)$ is order reflecting and essentially
  surjective.
\end{lema}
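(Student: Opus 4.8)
The plan is to reduce the statement about indexed monotonic maps to the pointwise statement of Observation~\ref{obse:essentiallysur}(2), which characterizes an equivalence of preorders as an order-reflecting and essentially surjective monotonic map. So the whole content is the interplay between ``being an equivalence in the indexed sense'' (i.e.\ having an indexed weak inverse, with the pseudo-naturality square \eqref{eq:pseudo} holding for $\tau$) and ``being componentwise an equivalence''.

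First I would prove the easy direction: if $\sigma:{\bf C}\to{\bf D}$ is an equivalence with indexed weak inverse $\tau$, then for each $I$ the equations $\tau_I\circ\sigma_I\cong\id_{{\bf C}(I)}$ and $\sigma_I\circ\tau_I\cong\id_{{\bf D}(I)}$ say precisely that $\sigma_I$ is an equivalence of preorders with weak inverse $\tau_I$; by Observation~\ref{obse:essentiallysur}(2) this gives order reflection and essential surjectivity of each $\sigma_I$ immediately. (One does not even need pseudo-naturality of $\tau$ for this direction.)

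For the converse, suppose each $\sigma_I$ is order reflecting and essentially surjective. By Observation~\ref{obse:essentiallysur}(2) each $\sigma_I$ is an equivalence of preorders, so for each $I$ there is a monotonic $\tau_I:{\bf D}(I)\to{\bf C}(I)$ with $\tau_I\circ\sigma_I\cong\id_{{\bf C}(I)}$ and $\sigma_I\circ\tau_I\cong\id_{{\bf D}(I)}$; concretely $\tau_I$ sends $d\in{\bf D}(I)$ to any chosen $c\in{\bf C}(I)$ with $\sigma_I(c)\cong d$, and order reflection makes this choice monotonic and well-defined up to $\cong$. The real work is to check that the family $\tau=(\tau_I)_I$ is an \emph{indexed} monotonic map, i.e.\ that it satisfies \eqref{eq:pseudo}: for $f:J\to I$ and $\psi\in{\bf D}(I)$ we need $\tau_J(f^*(\psi))\cong f^*(\tau_I(\psi))$. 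To see this, apply $\sigma_J$ to both sides: $\sigma_J(\tau_J(f^*\psi))\cong f^*\psi$ by the counit relation at $J$, while $\sigma_J(f^*(\tau_I\psi))\cong f^*(\sigma_I(\tau_I\psi))\cong f^*\psi$ using the pseudo-naturality \eqref{eq:pseudo} of $\sigma$ and the counit relation at $I$ together with functoriality of $f^*={\bf D}(f)$ and monotonicity. Thus $\sigma_J$ sends both $\tau_J(f^*\psi)$ and $f^*(\tau_I\psi)$ to isomorphic objects, and since $\sigma_J$ is order reflecting it reflects $\cong$, giving $\tau_J(f^*\psi)\cong f^*(\tau_I\psi)$ as required. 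Finally, the two composite isomorphisms $\tau\circ\sigma\cong\id_{\bf C}$ and $\sigma\circ\tau\cong\id_{\bf D}$ hold because they hold componentwise by construction, and an isomorphism of indexed monotonic maps is by definition a componentwise isomorphism. Hence $\sigma$ is an equivalence of indexed preorders.

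The main obstacle is precisely the verification that the pointwise weak inverses $\tau_I$ assemble into a pseudo-natural (indexed) transformation; everything else is bookkeeping with the definitions. The key device there is that order reflection of $\sigma_J$ upgrades a proved isomorphism $\sigma_J(x)\cong\sigma_J(y)$ to $x\cong y$, which is exactly what lets us transport the naturality square for $\sigma$ across to one for $\tau$. I would remark that this is the indexed analogue of the classical fact that an adjoint equivalence can be built from a mere equivalence, and that here the preorder setting makes all coherence conditions automatic.
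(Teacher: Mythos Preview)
Your proof is correct and follows the approach the paper intends: the paper merely says the lemma ``follows from the considerations appearing at the beginning of Paragraph~\ref{item:recall}'', i.e.\ from Observation~\ref{obse:essentiallysur}(2), without spelling out any details. You supply precisely those details, and in particular you carry out the one genuinely non-trivial step the paper leaves implicit: that the pointwise weak inverses $\tau_I$ assemble into an \emph{indexed} monotonic map, via the order-reflection trick for transporting the pseudo-naturality square of $\sigma$ to one for $\tau$.
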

\item Next we consider a special kind of indexed Heyting preorders,
  called \emph{triposes}, see \cite{kn:hyland}.
\begin{defi}\label{defi:triposes} 
A \emph{tripos} is a functor ${\bf P}:{\bf Set}^{\operatorname
  {op}}\to{\bf HPO} $ such 
that:
\begin{enumerate}
  \item For every function $f:J\to I$, the reindexing map
    $f^*:{\bf P}(I)\to{\bf P}(J)$ has a right adjoint
    $\forall_f:{\bf P}(J)\to{\bf P}(I)$.
\item If
\begin{equation}\label{eq:pullback}
\begin{array}[c]{ccc}
P&\stackrel{q}{\longrightarrow}&K\\
{\downarrow}\scriptstyle{p}&&{\downarrow}\scriptstyle{g}\\
J&\stackrel{f}{\longrightarrow}&I
\end{array}
\end{equation}

%% \begin{equation}\label{eq:pullback} \vcenter{\xymatrix{
%% P\pullbackcorner \ar[r]^q \ar[d]_p & K \ar[d]^g \\ J \ar[r]_f & I }}
%% \end{equation}
is a pullback square of sets and functions, then
$\forall_q(p^*(\varphi))\cong g^*(\forall_f(\varphi))$ for all
$\varphi\in{\bf P}(J)$ (this is the \emph{Beck-Chevalley condition}).
\item\label{def:tripos_chi} ${\bf P}$ has a \emph{generic
  predicate}, i.e.\ there exists a set ${\mathsf {Prop}}$, and a
  ${\mathsf {tr}}\in{\bf P}({\mathsf{Prop}})$ such that for every
  set $I$ and $\varphi\in{\bf P}(I)$ there exists a (not
  necessarily unique) function $\chi_\varphi:I\to{\mathsf{Prop}}$ with
  $\varphi\cong\chi_\varphi^*({\mathsf {tr}})$.

\end{enumerate}
\end{defi}

\begin{rema}\label{rema:equiv-iord-trip} Assume that ${\bf C}$ 
and ${\bf P}$ are equivalent indexed preorders, and that
${\bf P}$ is a tripos, then so is ${\bf C}$.
\end{rema}

\item \label{item:programing} We briefly recall how to program
  directly in an $\oca$ ${\mathcal A}$, using the standard
  codification in combinatory algebras--see for example
  \cite{kn:report} or \cite[Theorem 3.4]{kn:ocar} for details--.
\noindent
For any finite set of variables $\{x_1,\cdots,x_k,y\} $, there is a
function $\lambda^*y:A[x_1,\cdots,x_k,y] \rightarrow A[x_1,\cdots,
  x_k]$ satisfying the following property: If $t \in
A[x_1,\cdots,x_k,y]$ and $u \in A[x_1,\cdots,x_k]$, then $\lambda^*y
(t)\circ u \leq t\{y:=u\}$.
  
The function $\lambda^*y$ is defined recursively: i) If $y\neq x$,
then $\lambda^*y(x):= \ck x$; ii) $\lambda^*y(y):= \cs\ck\ck$; iii) if
$p, q$ are polynomials, then: $\lambda^*y(p q):= \cs(\lambda^*y(p))
(\lambda^*y(q))$.

Sometimes we write: $\lambda^*y(t)=\lambda^*y.t$.
\section{Ordered combinatory algebras}\label{section:oca} 
\noindent
\item \label{item:koca} We start by recalling the following basic
  structure, compare with \cite[Section 3.2]{kn:ocar}.

\begin{defi} \label{defi:ioca}\label{defi:foca}
%%An implicative ordered combinatory
  %% algebra --a $\ioca$--, consists 

Let $(A,\leq)$ be an $\operatorname{inf}$--complete
  partially ordered set equipped with: 
\begin{itemize}
\item[(OP)]\label{eq:OP:defi:ioca} binary operations
\[\rapp:A\times A\to A,\qquad(a,b)\;\mapsto\; a b\] called
\emph{application}, monotone in both arguments, and
\[\mathrm{imp}:A \times A\to A,\qquad (a,b)\;\mapsto\; a\to b\]
called \emph{implication}, antimonotone in the first argument and monotone
in the second;
\item[(CO)] distinguished elements $\cs,\ck \in A$
such that for all $a,b,c\in A$ the following holds:
\begin{itemize}
\item[\rpaxk] $\ck a b \leq a$
\item[\rpaxs] $\cs a b c\leq a c (b c)$
%\item[\rpaxc] $\comc\leq ((a\to b)\to a )\to a$
\item[\rpaxha] $\text{If}\quad a\leq b\to c, \quad \text{then}
  \quad \quad a b\leq c.$
 
\end{itemize}
\item[(FI)] a subset $\Phi\subseteq A$ (called \emph{filter}) which is
closed under application and such that $\cs, \ck \in \Phi$.
%\end{itemize}
\end{itemize} 
In above context we establish the following definitions:
\smallskip 
\begin{enumerate}
\item The structure $\mathcal A= (A, \operatorname{app},
  \operatorname{imp}, \cs, \ck, \Phi)$ as above is called a
  \emph{quasi implicative ordered combinatory algebra} --a $\qoca$.
\item If there exists a distinguished element  $\ce \in \Phi$ such that:       
\[\text{(PE)}\quad\quad\quad \text{If}\quad ab\leq
c,\quad\text{then}\quad \quad \ce a \leq b\to c \] 
 we say that $\mathcal A= (A, \operatorname{app}, \operatorname{imp},
 \cs, \ck, \ce, \Phi)$ is an \emph{implicative ordered combinatory
   algebra} --a $\ioca$.
\item If for all $a,b,c \in A$ \[\text{(PE)}'\quad\quad\quad
  \text{If}\quad ab\leq c,\quad\text{then}\quad \quad a \leq b\to
  c, \] we say that $\mathcal A= (A, \operatorname{app},
  \operatorname{imp}, \cs, \ck, \Phi)$ is a \emph{full adjunction
    implicative ordered combinatory algebra} --a $\foca$.
\end{enumerate}
\end{defi}  
\medskip

We refer the reader to \cite[Section 3]{kn:ocar} for motivation and
more details concerning the definition of $\ioca$, in particular we
mention that the element $\ce$ is called an \emph{adjunctor}.  When
there is no danger of confusion we denote $\mathcal A=
(A, \operatorname{app}, \operatorname{imp}, \cs, \ck, \ce, \Phi)$ as $A$.
\begin{obse} 
Notice that the element $\ci = \cs\ck\ck \in \Phi$, and that given a
$\foca$ called $A$, for all $a,b,c \in A$ if $ab \leq c$ implies that
$\ci a \leq a \leq b \rightarrow c$ and thus $A$ is also a $\ioca$
with adjunctor $\ci$.  
\end{obse}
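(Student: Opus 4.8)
The plan is to verify the three assertions of the Observation in turn, each being a short computation with the combinatory axioms. \emph{First}, that $\ci=\cs\ck\ck\in\Phi$: this is immediate from the axioms grouped under (FI), since $\cs,\ck\in\Phi$ and $\Phi$ is closed under application, whence $(\cs\ck)\ck\in\Phi$.

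\emph{Second}, I would establish the inequality $\ci a\leq a$ for every $a\in A$. Instantiating axiom $\rpaxs$ with the triple $(\ck,\ck,a)$ gives $\cs\ck\ck a\leq \ck a(\ck a)$, and instantiating axiom $\rpaxk$ with the pair $(a,\ck a)$ gives $\ck a(\ck a)\leq a$. Transitivity of $\leq$ then yields $\ci a=\cs\ck\ck a\leq a$. Note that no appeal to monotonicity of the operations is needed here; the inequality is just the composition of two axiom instances.

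\emph{Third}, assume $A$ is a $\foca$ and that $ab\leq c$. Axiom $(PE)'$ gives $a\leq b\to c$, and chaining this with the inequality $\ci a\leq a$ from the previous step produces $\ci a\leq b\to c$. This is exactly the statement that $\ci$ satisfies $(PE)$. Putting the pieces together: the structure $(A,\rapp,\rimp,\cs,\ck,\ci,\Phi)$ is a $\qoca$ by hypothesis (this part of the data is left untouched), $\ci\in\Phi$ by the first step, and $(PE)$ holds with $\ce:=\ci$ by the third step; hence $A$ is a $\ioca$ with adjunctor $\ci$, as claimed.

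There is essentially no serious obstacle in this proof — it is a routine unwinding of definitions. The only point requiring a little care is the correct instantiation of $\rpaxs$ and $\rpaxk$ in the second step (in particular, taking the second argument of $\rpaxk$ to be $\ck a$), after which the desired chain of inequalities follows purely by transitivity.
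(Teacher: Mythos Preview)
Your proof is correct and follows exactly the line indicated in the paper: the Observation is stated without a separate proof, the chain $\ci a \leq a \leq b \to c$ being taken as self-evident, and your three steps simply supply the routine justifications (closure of $\Phi$ under application, the standard computation $\cs\ck\ck a \leq \ck a(\ck a) \leq a$ from $\rpaxs$ and $\rpaxk$, and an appeal to $(PE)'$). There is nothing to add.
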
 
\begin{obse} 
    Notice that the application $\rapp(a,b)$ is denoted as
  $ab$ (c.f.: \ref{eq:OP:defi:ioca}(OP)). Eventually $ab$ will be denoted $a
    \circ b$, when there is danger 
  of confusion.

  When operating with the function $\rapp$ we associate to
  the left and when operating with the function $\rimp$ we associate
  to the right, so that $abc$ means $(ab)c$ and $a\to b\to c$ means
  $a\to (b\to c)$. 

\end{obse}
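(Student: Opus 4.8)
The final statement is an \emph{observation} that fixes notational conventions rather than asserting a mathematical fact, so strictly speaking there is no proposition to prove. The plan, therefore, is to recognize the statement for what it is: a stipulation that the binary operation $\rapp$ introduced in \ref{eq:OP:defi:ioca}(OP) will henceforth be written in juxtaposition form $ab$ (and occasionally $a \circ b$ to avoid ambiguity), together with the two parenthesization conventions --- left-associativity for application, so that $abc$ abbreviates $(ab)c$, and right-associativity for implication, so that $a \to b \to c$ abbreviates $a \to (b \to c)$.

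Since these are definitional choices, the only thing one could conceivably ``verify'' is that the conventions are internally consistent and compatible with the earlier definitions, and this is immediate. By (OP), both $\rapp$ and $\rimp$ are genuine binary functions $A \times A \to A$; hence once the associativity conventions are fixed, every finite string of symbols denotes a well-defined element of $A$ and no ambiguity can remain. The alternative symbol $a \circ b$ denotes the very same value $\rapp(a,b)$, so interchanging the two notations changes nothing.

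The main --- and indeed only --- ``obstacle'' is the absence of any claim: the observation contains no hypotheses, no quantifiers, and no assertion that could be true or false, hence nothing to derive. Accordingly I would not attempt a formal argument but would instead treat the observation as part of the ambient notational apparatus, to be invoked silently whenever strings such as $\cs a b c$ or $a \to b \to c$ occur later in the paper.
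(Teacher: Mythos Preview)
Your assessment is correct: the observation is purely notational and the paper provides no proof for it either, so your treatment matches the paper's own handling exactly.
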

\item We use the completeness property to establish the following definition. 
\begin{defi} \label{defi:sharp}
Let $A$ be a $\qoca$, for $a,b \in A$ define 
\[a \,\sharp\, b=\inf\{c: a \leq (b \rightarrow c)\}.\]
\end{defi} 
\begin{theo}\label{theo:miquel}\ 
\begin{enumerate}
\item If $A$ is a $\qoca$, then for all $a,b \in A$, $a b \leq a
  \,\sharp\, b$.
\item If $A$ is a $\ioca$ with adjunctor $\ce$, we have that for all
  $a,b \in A$: $(\ce a) \,\sharp\, b \leq a b$.
\item If $A$ is a $\foca$ we have that for all
  $a,b \in A$: $ a \,\sharp\, b = a b$.
\end{enumerate}
\end{theo}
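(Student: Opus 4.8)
The plan is to prove the three statements in order, since each builds on the definition of $\sharp$ and the successively stronger axioms.

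\textbf{Part (1).} Let $A$ be a $\qoca$ and fix $a,b \in A$. The idea is that $ab$ is a lower bound for the set $S = \{c : a \leq (b \to c)\}$, whence $ab \leq \inf S = a \sharp b$. So I take an arbitrary $c \in S$, i.e.\ $a \leq b \to c$, and I must conclude $ab \leq c$. But that is exactly axiom $\rpaxha$ applied to the hypothesis $a \leq b \to c$. Since $c \in S$ was arbitrary, $ab$ is a lower bound of $S$, and by the defining property of $\inf$ we get $ab \leq a \sharp b$. (Implicitly this also shows $S \neq \emptyset$ is not needed — if $S = \emptyset$ then $\inf S = \top$ and the inequality is trivial; but in fact $b \to ab \geq a$ is not automatic, so one should not worry: the lower bound argument works regardless of whether $S$ is empty.)

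\textbf{Part (2).} Now assume $A$ is a $\ioca$ with adjunctor $\ce$; fix $a,b\in A$. I want $(\ce a)\sharp b \leq ab$, i.e.\ $ab$ lies above the infimum defining $(\ce a)\sharp b$. It suffices to show that $ab$ is a \emph{member} of the set $S' = \{c : \ce a \leq (b \to c)\}$, because any element of a set dominates its infimum. To see $ab \in S'$, I need $\ce a \leq b \to ab$. Apply axiom $(PE)$ with $c := ab$: the hypothesis of $(PE)$ is ``$ab \leq ab$'', which holds by reflexivity, so the conclusion ``$\ce a \leq b \to ab$'' follows. Hence $ab \in S'$ and therefore $(\ce a)\sharp b = \inf S' \leq ab$.

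\textbf{Part (3).} Finally assume $A$ is a $\foca$; fix $a,b \in A$. One inequality, $ab \leq a\sharp b$, is already Part (1) (every $\foca$ is in particular a $\qoca$). For the reverse inequality $a\sharp b \leq ab$, I argue exactly as in Part (2) but with axiom $(PE)'$ in place of $(PE)$: applying $(PE)'$ to the reflexivity fact $ab \leq ab$ yields $a \leq b \to ab$, so $ab$ belongs to $S = \{c : a \leq (b\to c)\}$, and consequently $a \sharp b = \inf S \leq ab$. Combining the two inequalities gives $a\sharp b = ab$.

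There is no real obstacle here; the only thing to be careful about is the direction of each comparison with the infimum — in Part (1) we use that $ab$ is a \emph{lower bound} of the defining set (so $ab \leq \inf$), whereas in Parts (2) and (3) we use that the relevant product is an \emph{element} of the defining set (so $\inf \leq$ that product). All three reductions come down to a single application of the appropriate adjunction-type axiom ($\rpaxha$, $(PE)$, or $(PE)'$) to a reflexivity instance, together with the universal property of $\inf$.
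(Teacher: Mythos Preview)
Your proof is correct and follows essentially the same route as the paper: in (1) you use $\rpaxha$ to show $ab$ is a lower bound of the defining set, and in (2) and (3) you apply $(PE)$ resp.\ $(PE)'$ to the reflexivity $ab\leq ab$ to exhibit $ab$ as a member of the defining set. The parenthetical digression about the empty case is unnecessary but harmless.
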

\begin{proof}\  
\begin{enumerate}
\item Follows from condition (PA): if $a 
  \leq (b \rightarrow c)$ we have that $a b \leq c$, and then $a
  b \leq a \,\sharp\, b$.
\item From $a b \leq a b$, we obtain that $\ce a
  \leq b \rightarrow (a b)$, and this implies that $(\ce a)\,
  \,\sharp\, b \leq a b$. 
\item Is proved as (2), but without $\ce$.
\end{enumerate} 
\end{proof}

\bigskip
\item \label{item:dfireal} In \cite{kn:ocar}, Section 6, we illustrated
  that we can define realizability for $\ioca$s and thus, to define
  realizability in higher-order arithmetics.  In what follows we
  mention briefly how to adapt the constructions therein, to the
  context of a $\foca$, and hence we show that the ordered combinatory
  algebras considered above, can be taken as an adequate platform
  to do realizability. 

The adaptations are minor: the kinds, the language
  and the type system are the same. We have to sligthly modify the
  typing rule of the implication that becomes:

\begin{prooftree}
    \AxiomC{$\Gamma, x:A^o\vdash p:B^o$}
    \RightLabel{\scriptsize{$(\to_i)$}} \UnaryInfC{$\Gamma\vdash
      \lambda^*x.p:(A^o\Rightarrow B^o)^o$}
  \end{prooftree}
  
The interpretation of $\mathcal L^\omega$ is defined in the same
manner than in \cite{kn:ocar}, and the adequacy follows also in the
same manner, the only new problem to deal with is the adequacy for the
implication rules. This follows directly using that the $\foca$
satisfies the full adjunction property.  Indeed, for the implication
rule: $(\to)_i$ we assume $\mathcal{A}\models \Gamma, x:A^o\vdash
    p:B^o$ where $\Gamma=x_1:A_1^o, \dots, x_k:A_k^o$. Consider an
    assignement $\mathfrak{a}$ and~$b_1, \dots, b_k\in A$ such that
    $b_i\leq \llbracket A^o_i\rrbracket$. We get:
      \begin{center}
      \begin{tabular}{rcl}
      $(\lambda^*x.p)\{x_1:=b_1, \dots, x_k:=b_k\}\circ \llbracket
        A^o\rrbracket$&$=$&$\lambda^*x.(p\{x_1:=b_1, \dots,
        x_k:=b_k\})\circ \llbracket
        A^o\rrbracket\leq$\\ &&$p\{x_1:=b_1, \dots, x_k:=b_k,
        x:=\llbracket A^o\rrbracket\}\leq$\\ &&$\llbracket
        B^o\rrbracket$
      \end{tabular}
      \end{center}
      for the first inequality see Paragraph \ref{item:programing},
      the last inequality follows directly from the assumption that
      $\mathcal{A}\models \Gamma, x:A^o\vdash p:B^o$. Applying the
      full adjunction relation between $\circ$ and $\rightarrow$ we
      deduce that $\lambda^*x.p\{x_1:=b_1, \dots, x_k:=b_k\}\leq
      \llbracket (A^o\Rightarrow B^o)^o\rrbracket$. As this is proved
      for all assignements $\mathfrak{a}$, we conclude that
      $\mathcal{A}\models \Gamma\vdash \lambda^*x.p:(A^o\Rightarrow
      B^o)^o$.
\section{Operations in sets of terms and stacks}\label{section:aksrl}
\item 
We briefly recapitulate the definitions and the basic operations in an
\emph{Abstract Krivine Structure} a.k.a. $\mathcal {AKS}$ and
introduce new operations.  Compare with \cite{kn:ocar} where some
aspects of the work of J.L. Krivine and T. Streicher are reformulated
(see \cite{kn:kr2008} and \cite{kn:streicher} respectively).

\item\emph{Polarities and closure
  operations.}\label{item:polarclosure} A triple of sets 
  $(\Lambda,\Pi,\Perp)$ with $\Perp \subseteq\Lambda \times \Pi$ being
  a subset or \emph{a relation} (see \cite[Chapter V, Section
    7]{kn:birkhoff}), induces a \emph{polarity} as illustrated below.
  A generic triple as above will be denoted as $\mathcal
  R=(\Lambda,\Pi,\Perp)$.

The elements of $\Lambda$ and of
$\Pi$ are called respectively \emph{terms} and \emph{stacks} and the
elements of $\Lambda \times \Pi$ are called \emph{processes}. The
processes are pairs $(t,\pi)$, usually denoted by $t \star \pi$ (c.f.:
\cite{{kn:kr2003}}).
If $t \star \pi \in \Perp$, we write $t \perp \pi$, and say
that ``$t$ is orthogonal to $\pi$'' or that ``$t$ realizes $\pi$''. If
$P \subseteq \Pi$ and $t \perp \pi$ for all $\pi \in P$ we say that
``$t$ realizes $P$'' and write $t \perp P$. Simetrically if $t \star
\pi \in \Perp$, we say 
that ``$\pi$ is orthogonal to $t$''. If
$L \subseteq \Lambda$ and $t \perp \pi$ for all $t \in L$ we say that
$\pi$ is perpendicular to $L$. 

%\begin{enumerate}
%\item 
\begin{defi}\label{defi:erre} Given a triple $\mathcal R$ as
  above, we define the 
  following order reversing maps and families of sets:
  \begin{align*} (\quad)^{\perp}:\mathcal
    P(\Lambda)&\xrightarrow{\hspace*{0.5cm}} \mathcal P(\Pi)\\ L
    &\longmapsto  L^{\perp}=\{\pi \in \Pi|\,\, \forall t \in
    L, t \perp \pi\};
  \end{align*}
  \begin{align*} \hspace*{-.1cm}{}^{\perp}(\quad):\mathcal
    P(\Pi)&\xrightarrow{\hspace*{.5cm}} \mathcal P(\Lambda)\\ P
    &\longmapsto {}^{\perp}P=\{t \in \Lambda|\,\, \forall \pi \in P, t
    \perp \pi\}.
  \end{align*}
  The maps are called the polar maps and the subsets $L^\perp$ and
  ${}^\perp P$ are called the polars --or perpendiculars-- of $L$ and
  $P$ respectively. 
\end{defi}
\begin{obse}\label{obse:props-perps}
The maps $(\quad)^{\perp}:\mathcal
P(\Lambda)\rightarrow \mathcal P(\Pi)$ and
${}^{\perp}(\quad):\mathcal P(\Pi)\rightarrow \mathcal P(\Lambda)$ are
order reversing. 

For an arbitrary $L \in \mathcal P(\Lambda)$ and $P \in \mathcal
P(\Pi)$, one has that ${}^{\perp}( L^{\perp}) \supseteq L$ and
$({}^{\perp}P)^{\perp} \supseteq P$. 

For an arbitrary $L \in
\mathcal P(\Lambda)$ and $P \in \mathcal P(\Pi)$, one has that
$({}^\perp(L^{\perp}))^{\perp} = L^{\perp}$ and
${}^{\perp}(({}^\perp P)^\perp) = {}^{\perp}P$.
\end{obse}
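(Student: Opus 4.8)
The three assertions of Observation~\ref{obse:props-perps} are exactly the standard facts about the Galois connection (polarity) attached to a relation, so the plan is to unwind the definitions in Definition~\ref{defi:erre} and then bootstrap; no structure on $\Lambda$ or $\Pi$ beyond the relation $\Perp$ is needed.

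First I would check order reversal. Suppose $L \subseteq L' \subseteq \Lambda$. If $\pi \in (L')^{\perp}$, then $t \perp \pi$ for every $t \in L'$, in particular for every $t \in L$, so $\pi \in L^{\perp}$; hence $(L')^{\perp} \subseteq L^{\perp}$. The argument for ${}^{\perp}(\quad)$ is identical with the roles of $\Lambda$ and $\Pi$ interchanged. Next, the ``unit'' inequalities: fix $L \subseteq \Lambda$ and let $t \in L$. By the definition of $L^{\perp}$, every $\pi \in L^{\perp}$ satisfies $t \perp \pi$; but this says precisely that $t \in {}^{\perp}(L^{\perp})$. Hence $L \subseteq {}^{\perp}(L^{\perp})$. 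Swapping $\Lambda$ and $\Pi$ yields $P \subseteq ({}^{\perp}P)^{\perp}$ for every $P \subseteq \Pi$.

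Finally, the idempotence of the two closure operators $L \mapsto {}^{\perp}(L^{\perp})$ and $P \mapsto ({}^{\perp}P)^{\perp}$. For the first identity, applying the unit inequality $P \subseteq ({}^{\perp}P)^{\perp}$ to the set $P = L^{\perp} \subseteq \Pi$ gives $L^{\perp} \subseteq ({}^{\perp}(L^{\perp}))^{\perp}$. For the reverse containment, start from $L \subseteq {}^{\perp}(L^{\perp})$ and apply the order-reversing map $(\quad)^{\perp}$, obtaining $({}^{\perp}(L^{\perp}))^{\perp} \subseteq L^{\perp}$. The two containments give $({}^{\perp}(L^{\perp}))^{\perp} = L^{\perp}$. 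The second identity ${}^{\perp}(({}^{\perp}P)^{\perp}) = {}^{\perp}P$ follows by the same two-line argument with $\Lambda$ and $\Pi$ exchanged.

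There is no genuine obstacle: everything is formal manipulation of the defining membership conditions. The only point requiring a little care is bookkeeping — tracking which of the two polar maps is being applied and in which direction each containment runs, since both maps are order-reversing and the notations ${}^{\perp}(\quad)$ and $(\quad)^{\perp}$ are easy to conflate. I would also note in passing that the third assertion exhibits $L^{\perp}$ and ${}^{\perp}P$ as the closed (``fixed'') subsets of the two closure operators, which is what makes them a useful substitute for $\mathcal P(\Lambda)$, $\mathcal P(\Pi)$ in the sequel.
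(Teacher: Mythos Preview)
Your proposal is correct and complete: it is the standard Galois-connection argument, and each step follows directly from Definition~\ref{defi:erre}. The paper itself gives no proof for this observation, treating it as a well-known fact about polarities (with an implicit pointer to Birkhoff), so your write-up simply supplies the routine verification the paper omits.
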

%\item 
The roles of $\Lambda$ and $\Pi$ up to now, are completely
symmetric. For reasons that will be clear later, for the next
considerations we work with $\Pi$, i.e. with subsets of stacks,
noting that the corresponding definitions for $\Lambda$ are
identical.
%\item 
\begin{defi}
The set $(({}^\perp P)^{\perp})$ is abreviated as $\overline
{P}$ (when $P$ is a wide expression, we write $(P)^{-}$ instead of
$\overline{P}$). 

The map $P\mapsto (({}^\perp
P)^{\perp})=\overline{P}$ is called the \emph{closure 
operation} associated to the polarity. The family of closed subsets
associated to the closure operation (see \cite[Chapter V, Section 7,
  Theorem 19]{kn:birkhoff}) is denoted as:
\[\mathcal P_{\perp}(\Pi)=\{P \subseteq \Pi:\, \overline{P}=
P\}.\]
\end{defi}
\begin{obse}
\label{item:basicpropincl} The basic properties of the inclusion
$\mathcal P_\perp(\Pi) \subseteq \mathcal P(\Pi)$ can be expressed
by saying that the first is a reflective subcategory of the
second. The map $(P \mapsto \overline{P}):
\mathcal P(\Pi) \rightarrow \mathcal P_\perp(\Pi)$ is the reflector
(see \cite{kn:bor1}). The reflective property can be expressed as:
for $P \in \mathcal P(\Pi)$ and $Q \in \mathcal P_\perp(\Pi)$,
then:
\[P \subseteq Q \Leftrightarrow \overline{P}
\subseteq Q  
\Leftrightarrow {}^{\perp}Q \subseteq {}^{\perp}P.\]
Notice that the polar maps are antitone bijections between
$\mathcal{P}_\perp(\Lambda)$ and $\mathcal{P}_\perp(\Pi)$, each
inverse of the other.  
\end{obse}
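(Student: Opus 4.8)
The plan is to derive the whole observation from Observation~\ref{obse:props-perps} together with the standard order-theoretic yoga of antitone Galois connections and closure operators, viewing $\mathcal P(\Pi)$ and $\mathcal P_\perp(\Pi)$ as thin categories (posets under inclusion), so that ``reflective subcategory'' just means the inclusion functor has a left adjoint. First I would record the Galois connection itself: for $L\subseteq\Lambda$ and $P\subseteq\Pi$, both $L\subseteq{}^\perp P$ and $P\subseteq L^\perp$ unfold to the single statement ``$t\perp\pi$ for all $t\in L$ and $\pi\in P$'', hence $L\subseteq{}^\perp P\iff P\subseteq L^\perp$. Combined with the facts from Observation~\ref{obse:props-perps} that $(\ )^\perp$ and ${}^\perp(\ )$ are order reversing, that $P\subseteq\overline P$, and that $\overline{(\ )}$ is idempotent on $\mathcal P(\Pi)$ (the identity ${}^\perp(({}^\perp P)^\perp)={}^\perp P$ post-composed with $(\ )^\perp$), this is exactly the situation of a Galois connection with its associated closure operators on each side.

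Next I would establish reflectivity and identify the reflector. Since the two structures are posets, it suffices to check that $P\mapsto\overline P$ is a well-defined left adjoint of the inclusion $\mathcal P_\perp(\Pi)\hookrightarrow\mathcal P(\Pi)$: well-definedness ($\overline P$ is closed) is the idempotency just mentioned, and the adjunction is precisely the first equivalence in the statement, namely $\overline P\subseteq Q\iff P\subseteq Q$ for $P\in\mathcal P(\Pi)$, $Q\in\mathcal P_\perp(\Pi)$. The forward implication uses extensivity $P\subseteq\overline P$; the backward one uses monotonicity of $\overline{(\ )}$ together with $\overline Q=Q$. For the second equivalence $\overline P\subseteq Q\iff{}^\perp Q\subseteq{}^\perp P$, one direction applies the order-reversing ${}^\perp(\ )$ to $({}^\perp P)^\perp\subseteq Q$ and uses ${}^\perp(({}^\perp P)^\perp)={}^\perp P$; the other applies $(\ )^\perp$ to ${}^\perp Q\subseteq{}^\perp P$ and uses $({}^\perp Q)^\perp=\overline Q=Q$. (Equivalently, this is just the Galois-connection reading of $\overline P\subseteq Q$.)

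Finally, for the mutually inverse antitone bijections between $\mathcal P_\perp(\Lambda)$ and $\mathcal P_\perp(\Pi)$, the key observation is that \emph{every} polar is already closed: the identity $({}^\perp(L^\perp))^\perp=L^\perp$ from Observation~\ref{obse:props-perps} says exactly $\overline{L^\perp}=L^\perp$, so $(\ )^\perp$ maps all of $\mathcal P(\Lambda)$ into $\mathcal P_\perp(\Pi)$, and symmetrically ${}^\perp(\ )$ maps $\mathcal P(\Pi)$ into $\mathcal P_\perp(\Lambda)$. Restricting, we get antitone (by Observation~\ref{obse:props-perps}) maps $\mathcal P_\perp(\Lambda)\rightleftarrows\mathcal P_\perp(\Pi)$, and for $L\in\mathcal P_\perp(\Lambda)$ one has ${}^\perp(L^\perp)=\overline L=L$ (closure on the $\Lambda$ side), while for $P\in\mathcal P_\perp(\Pi)$ one has $({}^\perp P)^\perp=\overline P=P$; hence the two restricted maps are inverse to each other.

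I do not expect a genuine obstacle here: the statement is a bookkeeping consequence of Observation~\ref{obse:props-perps}. The only thing to be careful about is tracking on which side ($\Lambda$ or $\Pi$) each closure operator lives --- $\overline{(\ )}$ is defined as $({}^\perp(\ ))^\perp$ on $\mathcal P(\Pi)$, but the analogous operator $L\mapsto{}^\perp(L^\perp)$ is what appears when handling $\mathcal P_\perp(\Lambda)$ --- and invoking the correct line of Observation~\ref{obse:props-perps} each time. To keep the write-up short I would in fact first prove the single lemma ``every polar is a closed set'' and then read all four assertions off it together with the Galois connection.
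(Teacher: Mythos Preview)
Your argument is correct and complete. The paper, however, does not actually supply a proof of this observation: it states the reflective-subcategory property and the two equivalences as standard facts, with only a parenthetical reference to Borceux's handbook for the general categorical notion, and it asserts the antitone-bijection claim without justification. So there is no ``paper's own proof'' to compare against; your write-up simply fills in the details that the paper leaves to the reader, using exactly the ingredients the paper has set up in Observation~\ref{obse:props-perps}.
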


We define another closure operator
(not coming in the standard fashion from the polarity) as follows:
\begin{defi}\label{defi:basicpropinc2}
  \[P \mapsto
  \widehat{P}:=\bigcup\{\overline{\pi}: \pi \in P\}: \mathcal
  P(\Pi)\rightarrow \mathcal P(\Pi).\] 
When $P$ is a too wide expression, we write $(P)^\wedge$ instead of
$\widehat{P}$. 

The family of closed subsets
associated to this closure operation is denoted as:
\[\mathcal P_{\bullet}(\Pi)=\{P \subseteq \Pi:\, \widehat{P}=
P\}.\]
\end{defi}
\begin{obse}\label{item:basicpropinc2}
A reflective property also holds for $(\ )^\wedge$\ : 
for $P \in \mathcal P(\Pi)$ and $Q \in \mathcal P_\bullet(\Pi)$,
then:
\[P \subseteq Q \Leftrightarrow \widehat{P} \subseteq Q.\]
\end{obse}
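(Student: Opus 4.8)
The plan is to deduce the stated equivalence from two elementary facts about the operator $(\ )^{\wedge}$ introduced in Definition~\ref{defi:basicpropinc2}: that it is \emph{extensive}, i.e.\ $P\subseteq\widehat{P}$ for every $P\in\mathcal P(\Pi)$, and that it is \emph{monotone}, i.e.\ $P\subseteq P'$ implies $\widehat{P}\subseteq\widehat{P'}$. Both are easy consequences of the corresponding properties of the double-perpendicular closure $\overline{(\ )}$ collected in Observation~\ref{obse:props-perps}; once they are in place the biconditional is immediate, exactly as in Observation~\ref{item:basicpropincl} for $\overline{(\ )}$.

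First I would record extensiveness. Fix $\pi\in P$. Applying Observation~\ref{obse:props-perps} with the one-element set $\{\pi\}$ in the role of $P$ gives $\{\pi\}\subseteq ({}^{\perp}\{\pi\})^{\perp}=\overline{\{\pi\}}=\overline{\pi}$, where $\overline{\pi}$ is the abbreviation for $\overline{\{\pi\}}$ used in Definition~\ref{defi:basicpropinc2}. Hence $\pi\in\overline{\pi}\subseteq\bigcup\{\overline{\rho}:\rho\in P\}=\widehat{P}$, and since $\pi\in P$ was arbitrary, $P\subseteq\widehat{P}$. Monotonicity is purely formal: if $P\subseteq P'$, the index set of the union defining $\widehat{P}$ is contained in that of $\widehat{P'}$, so $\widehat{P}=\bigcup\{\overline{\rho}:\rho\in P\}\subseteq\bigcup\{\overline{\rho}:\rho\in P'\}=\widehat{P'}$.

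With these two facts the equivalence splits into two one-line implications. For the direction $\widehat{P}\subseteq Q\Rightarrow P\subseteq Q$ one simply composes $P\subseteq\widehat{P}\subseteq Q$ using extensiveness. For the converse, assume $P\subseteq Q$; monotonicity gives $\widehat{P}\subseteq\widehat{Q}$, and since $Q\in\mathcal P_{\bullet}(\Pi)$ means by definition that $\widehat{Q}=Q$, we conclude $\widehat{P}\subseteq Q$.

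I do not expect any genuine obstacle here: this is a routine instance of the general principle that the fixed points of an extensive, monotone, idempotent operator form a reflective subposet with the operator as reflector. The only points that need a moment's attention are purely notational, namely that $\overline{\pi}$ stands for the closure $\overline{\{\pi\}}$ of the singleton, and that membership in $\mathcal P_{\bullet}(\Pi)$ is literally the fixed-point condition $\widehat{Q}=Q$ — so for the present statement one does not even need to verify separately that $(\ )^{\wedge}$ is idempotent (although that too follows easily from $\rho\in\overline{\pi}\Rightarrow\overline{\rho}\subseteq\overline{\pi}$, using idempotence of $\overline{(\ )}$).
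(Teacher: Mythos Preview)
Your argument is correct and is exactly the standard verification that an extensive, monotone operator has its fixed points as a reflective subposet; the paper itself states this observation without proof, treating it as evident, so your write-up simply supplies the routine details the paper omits.
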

We define now an interior operator  as follows:
\begin{defi}\label{defi:interior}
\[P \mapsto
\widetilde{P}:=\{\pi \in P: \overline{\pi} \subseteq P\}: \mathcal
P(\Pi)\rightarrow \mathcal P(\Pi).\] 
When $P$ is a too wide expression, we write 
$(P)^\wedge$ instead of $\widetilde{P}$. 

Notice --see the proof below--
that we also have: \[\mathcal P_{\bullet}(\Pi)=\{P \subseteq \Pi:\,
\widetilde{P}= P\}.\] 
\end{defi} 
\begin{obse}\label{item:interior}
The corresponding reflection property in this case reads as: 
$P \in \mathcal P_\bullet(\Pi)$ and $Q \subseteq \mathcal P(\Pi)$,
then:
\[P \subseteq Q \Leftrightarrow P \subseteq \widetilde{Q}.\]
\end{obse}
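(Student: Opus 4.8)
The plan is to show that $\widetilde{(\ )}$ is an interior operator on $\mathcal P(\Pi)$ — that is, contractive, monotone and idempotent — whose family of fixed points coincides with $\mathcal P_\bullet(\Pi)$; the displayed equivalence then drops out as the usual coreflection property of an interior operator, and along the way one also obtains the characterization of $\mathcal P_\bullet(\Pi)$ announced in Definition~\ref{defi:interior}. The only external input is that $\overline{(\ )}$ is a closure operator (monotone, extensive and idempotent), recorded in Observation~\ref{obse:props-perps}; I will repeatedly use the immediate consequence that $\sigma\in\overline\pi$ implies $\overline\sigma\subseteq\overline\pi$, obtained by applying monotonicity of $\overline{(\ )}$ to $\{\sigma\}\subseteq\overline\pi$ and then idempotency $\overline{\overline\pi}=\overline\pi$.

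First I would check the three properties of an interior operator. Contractivity, $\widetilde P\subseteq P$, is built into the definition. For monotonicity, if $P\subseteq P'$ and $\pi\in\widetilde P$ then $\pi\in P\subseteq P'$ and $\overline\pi\subseteq P\subseteq P'$, so $\pi\in\widetilde{P'}$. For idempotency, contractivity gives $\widetilde{\widetilde P}\subseteq\widetilde P$ for free; for the reverse inclusion fix $\pi\in\widetilde P$ and observe that for every $\sigma\in\overline\pi$ one has $\sigma\in\overline\pi\subseteq P$ and $\overline\sigma\subseteq\overline\pi\subseteq P$, whence $\sigma\in\widetilde P$; thus $\overline\pi\subseteq\widetilde P$, and since $\pi\in\widetilde P$ this means $\pi\in\widetilde{\widetilde P}$.

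Next I would identify the fixed points of $\widetilde{(\ )}$ with $\mathcal P_\bullet(\Pi)$. If $\widehat P=P$ and $\pi\in P$, then $\overline\pi$ is one of the sets whose union forms $\widehat P$ (Definition~\ref{defi:basicpropinc2}), so $\overline\pi\subseteq\widehat P=P$, i.e. $\pi\in\widetilde P$; combined with contractivity this gives $\widetilde P=P$. Conversely, if $\widetilde P=P$ and $\sigma\in\widehat P$, pick $\pi\in P$ with $\sigma\in\overline\pi$; since $\pi\in P=\widetilde P$ we have $\overline\pi\subseteq P$, so $\sigma\in P$, and since $P\subseteq\widehat P$ always holds we conclude $\widehat P=P$. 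Hence $\{P:\widetilde P=P\}=\{P:\widehat P=P\}=\mathcal P_\bullet(\Pi)$, which is the announced characterization.

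Finally, the displayed equivalence: if $P\in\mathcal P_\bullet(\Pi)$ and $P\subseteq Q$, then monotonicity of $\widetilde{(\ )}$ together with $\widetilde P=P$ yields $P=\widetilde P\subseteq\widetilde Q$; conversely $P\subseteq\widetilde Q\subseteq Q$ by contractivity. I do not expect any real obstacle here, since the whole argument is formal bookkeeping with the closure operator $\overline{(\ )}$; the one point that deserves attention is isolating the implication $\sigma\in\overline\pi\Rightarrow\overline\sigma\subseteq\overline\pi$ and invoking it uniformly in the idempotency step and in both halves of the fixed-point argument.
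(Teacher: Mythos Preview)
Your proof is correct and follows the same line as the paper: the observation is just the coreflection property of an interior operator whose fixed points are $\mathcal P_\bullet(\Pi)$, and you verify both of these facts directly. The paper treats this as an observation without giving a proof in place; the details you spell out---contractivity, monotonicity, idempotency of $\widetilde{(\ )}$, and the identification of its fixed points with $\mathcal P_\bullet(\Pi)$---are exactly what the paper records in Observation~\ref{item:propofperpbullet} and Theorem~\ref{item:hatperp=perp}(4), so your argument is essentially an inlined version of the paper's deferred justifications.
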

\begin{obse}\label{obse:appendix} 
Let us recall that a \emph{closure operator} in $\Pi$ is a map $P
\mapsto \operatorname{c}(P): \mathcal P(\Pi) \rightarrow \mathcal
P(\Pi)$ such that: 
\begin{itemize}
\item For all $P \subseteq \Pi$, $P \subseteq
\operatorname{c}(P)$.  
\item For all $P \subseteq \Pi$, $\operatorname{c}(\operatorname{c}(P))=
\operatorname{c}(P)$. 
\item For all $P,Q\subseteq \Pi$, $P \subseteq Q$
implies that $\operatorname{c}(P) \subseteq \operatorname{c}(Q)$.
\end{itemize} 
The operator is said to be a \emph{topological closure operator} if:
\begin{itemize}
\item For all $P,Q\subseteq \Pi$, then $\operatorname{c}(P \cup Q) =
\operatorname{c}(P) \cup \operatorname{c}(Q)$.
\item $\operatorname{c}(\emptyset)=\emptyset$
\end{itemize}
It is said to satisfy the
\emph{Alexandroff condition} if: 
\begin{itemize}
\item For every $\{P_i: i \in
I\}\subset\mathcal{P}(\Pi)$ we have $\operatorname{c}(\bigcup\{P_i: i
\in I\})=\bigcup\{\operatorname{c}(P_i): i \in I\}$.  
\end{itemize}
In the above
situation it is customary to define the set of
$\operatorname{c}$--closed subsets of $\Pi$ as: $\mathcal
P_{\operatorname c}(\Pi)=c(\mathcal P(\Pi))=\{P \subseteq \Pi:
\operatorname{c}(P)=P.\}$ Observe that if $\{P_i: i \in
I\}\subset\mathcal{P}(\Pi)$ is a family of $\operatorname{c}$--closed sets, 
the set $\bigcap_i P_i$ is also closed.

The notion of \emph{interior operator} is dual to the one of
\emph{topological closure operator}. The open sets defined by an
\emph{interior operator} $i$ are the sets $P$ such that $i(P)=P$. 
\end{obse}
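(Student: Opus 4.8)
The plan is to verify directly that $\operatorname{c}\bigl(\bigcap_{i\in I} P_i\bigr)=\bigcap_{i\in I} P_i$, using only the three defining properties of a closure operator (extensivity, monotonicity, idempotence) together with the hypothesis that each $P_i$ is $\operatorname{c}$--closed. Establishing this equality is precisely what it means for $\bigcap_{i\in I} P_i$ to be closed, so the whole statement reduces to a double inclusion.

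First I would dispose of one inclusion for free: extensivity of the closure operator gives $\bigcap_{i\in I} P_i \subseteq \operatorname{c}\bigl(\bigcap_{i\in I} P_i\bigr)$ with no hypotheses whatsoever. Thus all the content of the statement is carried by the reverse inclusion. To obtain it, I would fix an arbitrary index $j\in I$ and note the trivial set-theoretic inclusion $\bigcap_{i\in I} P_i \subseteq P_j$. Applying monotonicity of $\operatorname{c}$ to this inclusion yields $\operatorname{c}\bigl(\bigcap_{i\in I} P_i\bigr)\subseteq \operatorname{c}(P_j)$, and since $P_j$ is closed we have $\operatorname{c}(P_j)=P_j$, so $\operatorname{c}\bigl(\bigcap_{i\in I} P_i\bigr)\subseteq P_j$. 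As $j$ was arbitrary, taking the intersection over all $j\in I$ gives $\operatorname{c}\bigl(\bigcap_{i\in I} P_i\bigr)\subseteq \bigcap_{i\in I} P_i$. Combining the two inclusions produces the desired equality.

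There is essentially no obstacle here: the argument is the standard fact that the fixed points of any monotone, extensive operator are closed under arbitrary intersection, and it is worth remarking that idempotence of $\operatorname{c}$ is not even invoked, nor are the topological or Alexandroff conditions listed afterwards. Only extensivity and monotonicity, together with the closedness of each $P_i$, are used. The one point deserving a word of care is the degenerate case $I=\emptyset$, where $\bigcap_{i\in I} P_i=\Pi$; this is consistent since $\Pi$ is always closed by extensivity applied to $P=\Pi$.
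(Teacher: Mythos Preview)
Your argument is correct and is exactly the standard verification one would write down. The paper itself does not supply a proof for this observation---it simply states the fact as part of a recollection of standard definitions---so there is nothing to compare against beyond noting that your proof is the expected one.
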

The operators defined above satisfy the following:
\begin{obse}\label{item:propofperpbullet} 
  The operator $P \mapsto \overline{P}: \mathcal P(\Pi)
  \rightarrow \mathcal P(\Pi)$ is a closure operator $P \mapsto
  \widehat{P}: \mathcal P(\Pi)\rightarrow \mathcal P(\Pi)$ is a
  topological closure operator and $P \mapsto \widetilde{P}:\mathcal
  P(\Pi)\rightarrow \mathcal P(\Pi)$ is an interior
  operator. Moreover, the topology in $\Pi$ associated to the closure
  operator $P \mapsto \widehat{P}$ is an Alexandroff topology
  (c.f.:\ref{obse:topologies-alexandroff}).   
  The proof of this fact follows directly from the
  adjunction properties, of the operators: $P \mapsto \overline{P}, P
  \mapsto \widehat{P}, P \mapsto \widetilde{P}$.  
\end{obse}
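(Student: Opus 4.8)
The plan is to verify the three operator-axiom sets directly from the definitions, after first isolating one small fact about the double-perpendicular closure that every idempotence argument below will reuse.

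I would begin with $P\mapsto\overline{P}$. That it is a closure operator is essentially the content of Observation~\ref{obse:props-perps}: extensivity is the stated inclusion $({}^{\perp}P)^{\perp}\supseteq P$; monotonicity holds because $\overline{(-)}$ is the composite of the two order-reversing polar maps and so is order preserving; and idempotence is the computation $\overline{\overline{P}}=({}^{\perp}(({}^{\perp}P)^{\perp}))^{\perp}=({}^{\perp}P)^{\perp}=\overline{P}$, which uses the identity ${}^{\perp}(({}^{\perp}P)^{\perp})={}^{\perp}P$ from that same observation. From these three properties I extract the lemma on which the rest depends: whenever $\tau\in\overline{\pi}$ one has $\overline{\tau}\subseteq\overline{\pi}$, since $\{\tau\}\subseteq\overline{\pi}$ gives $\overline{\tau}=\overline{\{\tau\}}\subseteq\overline{\overline{\pi}}=\overline{\pi}$ by monotonicity and idempotence of $\overline{(-)}$.

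For $P\mapsto\widehat{P}$ the checks are then short. Extensivity: each $\pi\in P$ lies in $\{\pi\}\subseteq\overline{\pi}\subseteq\widehat{P}$. Monotonicity: $P\subseteq Q$ gives $\{\overline{\pi}:\pi\in P\}\subseteq\{\overline{\pi}:\pi\in Q\}$, hence $\widehat{P}\subseteq\widehat{Q}$. Idempotence: for $\sigma\in\widehat{P}$ pick $\pi\in P$ with $\sigma\in\overline{\pi}$, so the lemma gives $\overline{\sigma}\subseteq\overline{\pi}$ and therefore $\widehat{\widehat{P}}=\bigcup\{\overline{\sigma}:\sigma\in\widehat{P}\}\subseteq\bigcup\{\overline{\pi}:\pi\in P\}=\widehat{P}$, the reverse inclusion being extensivity. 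The remaining clauses are just manipulations of unions: $\widehat{\emptyset}=\emptyset$ is the empty union, and for any family $\{P_{i}\}_{i\in I}$ one has $\widehat{\bigcup_{i}P_{i}}=\bigcup\{\overline{\pi}:\pi\in\bigcup_{i}P_{i}\}=\bigcup_{i}\bigcup\{\overline{\pi}:\pi\in P_{i}\}=\bigcup_{i}\widehat{P_{i}}$, which for $I=\{1,2\}$ reads $\widehat{P\cup Q}=\widehat{P}\cup\widehat{Q}$ (topological closure operator) and in full generality is the Alexandroff condition.

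For $P\mapsto\widetilde{P}$: deflationarity $\widetilde{P}\subseteq P$ is built into the definition; monotonicity is clear, as enlarging the ambient set only makes the two conditions $\pi\in P$ and $\overline{\pi}\subseteq P$ easier to meet; and $\widetilde{\Pi}=\Pi$ together with $\widetilde{P\cap Q}=\widetilde{P}\cap\widetilde{Q}$ follow by splitting those conditions pointwise. For idempotence I would show $\overline{\pi}\subseteq\widetilde{P}$ whenever $\pi\in\widetilde{P}$: given $\tau\in\overline{\pi}$ we have $\tau\in\overline{\pi}\subseteq P$ and, by the lemma, $\overline{\tau}\subseteq\overline{\pi}\subseteq P$, so $\tau\in\widetilde{P}$; hence $\widetilde{\widetilde{P}}=\widetilde{P}$. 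The same remark that $\widetilde{P}=P$ amounts to $\overline{\pi}\subseteq P$ for every $\pi\in P$, i.e.\ to $\widehat{P}\subseteq P$, also proves the equality $\mathcal{P}_{\bullet}(\Pi)=\{P:\widetilde{P}=P\}$ asserted in Definition~\ref{defi:interior}. I do not expect a genuine obstacle here: the only substantive input is the Galois-connection fact that $\overline{(-)}$ is a closure operator, and everything else is formal; the sole place requiring attention is the idempotence of $\widehat{(-)}$ and of $\widetilde{(-)}$, where one should invoke the lemma $\tau\in\overline{\pi}\Rightarrow\overline{\tau}\subseteq\overline{\pi}$ rather than argue directly — stating it up front keeps both arguments clean.
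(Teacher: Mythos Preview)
Your proof is correct. The verifications are all sound, and isolating the lemma $\tau\in\overline{\pi}\Rightarrow\overline{\tau}\subseteq\overline{\pi}$ up front is exactly the right move, since it is the common ingredient in the idempotence of both $\widehat{(-)}$ and $\widetilde{(-)}$.

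The paper does not really give a proof here: the Observation ends with the single sentence that everything ``follows directly from the adjunction properties'' of the three operators, meaning the reflection properties recorded in Observations~\ref{item:basicpropincl}, \ref{item:basicpropinc2} and \ref{item:interior}. The intended argument is the standard order-theoretic fact that a monotone endomap satisfying a reflection condition of the form $P\subseteq Q\Leftrightarrow c(P)\subseteq Q$ (for $Q$ fixed by $c$) is automatically a closure operator, and dually for interiors; the Alexandroff clause is then read off from the explicit formula for $\widehat{(-)}$ as a union. Your route is more elementary and self-contained: you verify each axiom directly from the definitions, using only that $\overline{(-)}$ is a closure operator (itself drawn from Observation~\ref{obse:props-perps}). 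What you gain is that nothing is deferred to an unproved reflection statement; what the paper's one-line argument would buy, if spelled out, is a uniform reason why all three operators behave well, rather than three separate checks.
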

\begin{teo}
  \label{item:hatperp=perp}For an arbitrary subset $P \subseteq
  \Pi$:
  \begin{enumerate} 
  \item\label{iitem:hatperp=perp1} $\widetilde{P} \subseteq P
    \subseteq \widehat{P} \subseteq 
    \overline{P}$,  
  \item\label{iitem:hatperp=perp2} ${}^\perp P\ =\ {}^\perp(\widehat{P})$ and 
    $\overline{P}\ =\ \overline{\widehat{P}}$,
  \item\label{iitem:hatperp=perp3} $\mathcal P_{\perp}(\Pi) \subseteq
    \mathcal P_{\bullet}(\Pi) 
    \subseteq\mathcal P(\Pi)$,
  \item\label{iitem:hatperp=perp4} The following are equivalent: 
    \begin{enumerate}
      \item\label{iiitem:hatperp=perp4a} $P \in \mathcal
        P_\bullet(\Pi)$
      \item \label{iiitem:hatperp=perp4b}$\pi \in P \Leftrightarrow
        \overline{\pi} 
        \subseteq P$.  
      \item\label{iiitem:hatperp=perp4c} $\widetilde{P}=P$
    \end{enumerate}
  \item\label{iitem:hatperp=perp5} For $P \subseteq \Pi$, then $P
    \in \mathcal P_\perp(\Pi) \, 
    \Leftrightarrow \, \widetilde{P}=P=\widehat{P}=\overline{P}$. 
  \end{enumerate}
\end{teo}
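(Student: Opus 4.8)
The plan is to establish the five items roughly in the order they are stated, using only the definitions of the three operators (Definitions~\ref{defi:basicpropinc2}, \ref{defi:interior} and the overline notation), the order-reversing properties of the polar maps (Observation~\ref{obse:props-perps}), and the reflective properties collected in Observations~\ref{item:basicpropincl}, \ref{item:basicpropinc2}, \ref{item:interior}. Throughout I will freely use that $\overline{\pi}$ abbreviates $\overline{\{\pi\}}$, that $\pi\in\overline{\pi}$, and that $\overline{\overline{\pi}}=\overline{\pi}$, since $\overline{(-)}$ is a closure operator.

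\emph{Item \eqref{iitem:hatperp=perp1}.} The inclusion $\widetilde{P}\subseteq P$ is immediate from the definition of $\widetilde{P}$, and $P\subseteq\widehat{P}$ follows because $\pi\in\overline{\pi}$ for each $\pi\in P$. For $\widehat{P}\subseteq\overline{P}$: if $\sigma\in\overline{\pi}$ for some $\pi\in P$, then since $\{\pi\}\subseteq P$ and $\overline{(-)}$ is monotone we get $\overline{\pi}\subseteq\overline{P}$, hence $\sigma\in\overline{P}$.

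\emph{Item \eqref{iitem:hatperp=perp2}.} For ${}^\perp P={}^\perp(\widehat P)$: the inclusion ``$\supseteq$'' is monotonicity of ${}^\perp(-)$ applied to $P\subseteq\widehat P$. For ``$\subseteq$'', suppose $t\in{}^\perp P$ and let $\sigma\in\widehat P$, so $\sigma\in\overline{\pi}=({}^\perp(\{\pi\}^{\perp}))^{\perp}$ for some $\pi\in P$; since $t\perp\pi$ means $t\in{}^\perp\{\pi\}$, i.e. $t\in{}^\perp(\{\pi\}^{\perp\perp})$ wait --- more simply, $t\in{}^\perp P\subseteq{}^\perp\{\pi\}$, and ${}^\perp\{\pi\}={}^\perp(\overline{\pi})$ by the last sentence of Observation~\ref{item:basicpropincl} (applied to the singleton), so $t\perp\sigma$. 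This gives $t\in{}^\perp(\widehat P)$. Then $\overline{P}=({}^\perp P)^{\perp}=({}^\perp(\widehat P))^{\perp}=\overline{\widehat P}$.

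\emph{Item \eqref{iitem:hatperp=perp3}.} $\mathcal P_\bullet(\Pi)\subseteq\mathcal P(\Pi)$ is trivial. If $P\in\mathcal P_\perp(\Pi)$, i.e. $\overline P=P$, then for $\pi\in P$ we have $\overline\pi\subseteq\overline P=P$ by monotonicity, so $\widehat P=\bigcup_{\pi\in P}\overline\pi\subseteq P$; combined with $P\subseteq\widehat P$ from item~\eqref{iitem:hatperp=perp1}, $\widehat P=P$, so $P\in\mathcal P_\bullet(\Pi)$.

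\emph{Item \eqref{iitem:hatperp=perp4}.} I will show (a)$\Rightarrow$(b)$\Rightarrow$(c)$\Rightarrow$(a). For (a)$\Rightarrow$(b): if $\widehat P=P$ and $\overline\pi\subseteq P$ then $\pi\in\overline\pi\subseteq P$; conversely if $\pi\in P$ then $\overline\pi\subseteq\widehat P=P$. For (b)$\Rightarrow$(c): assuming (b), $\widetilde P=\{\pi\in P:\overline\pi\subseteq P\}=\{\pi:\pi\in P\}=P$, using (b) in both directions. For (c)$\Rightarrow$(a): if $\widetilde P=P$, take $\sigma\in\widehat P$, so $\sigma\in\overline\pi$ for some $\pi\in P=\widetilde P$, meaning $\overline\pi\subseteq P$; hence $\sigma\in P$. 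Thus $\widehat P\subseteq P$, and with item~\eqref{iitem:hatperp=perp1} we conclude $\widehat P=P$. (This also justifies the parenthetical remark in Definition~\ref{defi:interior}.)

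\emph{Item \eqref{iitem:hatperp=perp5}.} ``$\Rightarrow$'': if $\overline P=P$ then by item~\eqref{iitem:hatperp=perp3} $P\in\mathcal P_\bullet(\Pi)$, so by item~\eqref{iitem:hatperp=perp4} $\widehat P=P=\widetilde P$; together with $\overline P=P$ this is the chain of equalities. ``$\Leftarrow$'': immediate, since $\overline P=P$ is one of the asserted equalities.

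\textbf{Main obstacle.} The only genuinely non-formal point is the identity ${}^\perp\{\pi\}={}^\perp(\overline\pi)$ needed in item~\eqref{iitem:hatperp=perp2}; everything else is bookkeeping with monotonicity and the defining inclusions. That identity, however, is exactly the content of the last displayed equivalence in Observation~\ref{item:basicpropincl} (with $P:=\{\pi\}$, $Q:=\overline\pi$, noting $\overline\pi\in\mathcal P_\perp(\Pi)$), so there is no real difficulty; one just has to be careful to invoke it rather than re-derive it, and to keep the direction of the polar maps straight.
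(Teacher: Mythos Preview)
Your proof is correct and follows essentially the same route as the paper's: both arguments hinge on the identity ${}^\perp\{\pi\}={}^\perp(\overline{\pi})$ for item~\eqref{iitem:hatperp=perp2}, and derive the remaining items by chasing the defining inclusions. The only cosmetic difference is that for $\widehat P\subseteq\overline P$ in item~\eqref{iitem:hatperp=perp1} the paper writes ${}^\perp P=\bigcap_{\pi\in P}{}^\perp\{\pi\}$ and takes the right polar, whereas you use monotonicity of $\overline{(-)}$ directly; and in item~\eqref{iitem:hatperp=perp4} the paper pairs (a)$\Leftrightarrow$(b) and (b)$\Leftrightarrow$(c) from the definitions while you run the cycle (a)$\Rightarrow$(b)$\Rightarrow$(c)$\Rightarrow$(a). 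You should clean up the abandoned half-sentence (``wait --- more simply'') in your argument for item~\eqref{iitem:hatperp=perp2} before submitting.
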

\begin{proof}\ 
  \begin{enumerate}
    \item Firstly observe that ${}^\perp P=\bigcap \{{}^\perp\{\pi\}:
      \pi \in P\}$. 

      Thus  
      $\overline{P}= ({}^\perp P)^\perp=(\bigcap \{{}^\perp\{\pi\}: \pi
      \in P\})^\perp \supseteq \bigcup \{\overline{\pi}: \pi \in
      P\}=\widehat{P}$. The rest of the inclusions are straightforward.
    \item It follows directly from (\ref{iitem:hatperp=perp1}) that
      ${}^\perp P\supseteq 
      {}^\perp(\widehat{P})$. For the reverse inclusion, observe that
      $t\perp \pi$ if 
      and only if $t\perp \overline{\pi}$ (c.f.:
      \ref{obse:props-perps}). The other equality is obtained applying
      the corresponding polar map to ${}^\perp P = {}^\perp (\widehat{P})$. 
   \item Let us consider $P\in\mathcal{P}_\perp(\Pi)$, i.e.:
     $\overline{P}=P$. By (\ref{iitem:hatperp=perp1}) we get 
     $P\subseteq \widehat{P}\subseteq \overline{P}=P$, which implies
     that $P\in\mathcal{P}_\bullet(\Pi)$, thus proving the left
     inclusion. The other one is evident.  
    \item The equivalence of (\ref{iiitem:hatperp=perp4a}) and
      (\ref{iiitem:hatperp=perp4b}) 
      follows directly from the 
      definition of $\widehat{P}$. The equivalence of
      (\ref{iiitem:hatperp=perp4b}) and (\ref{iiitem:hatperp=perp4c})  
      follows from the definition of $\widetilde{P}$.  
    \item This equivalence follows directly from
      (\ref{iitem:hatperp=perp3}) and (\ref{iitem:hatperp=perp4}). 
    \end{enumerate}
  \end{proof}
%\end{enumerate} 

\begin{obse}\label{obse:topologies-alexandroff}
By the equivalence of
(\ref{item:hatperp=perp})(\ref{iiitem:hatperp=perp4a}) and 
(\ref{item:hatperp=perp})(\ref{iiitem:hatperp=perp4c}), we have that the
open sets induced by 
$(\ )^\wedge$ are just the closed sets induced by $(\ )^\wedge$,
thus concluding that both topologies are Alexandroff.   
\end{obse}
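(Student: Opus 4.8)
The plan is to identify a single family of subsets of $\Pi$ that serves simultaneously as the closed sets of the closure operator $P\mapsto\widehat{P}$ and as the open sets of the interior operator $P\mapsto\widetilde{P}$, and then to read off the Alexandroff property of each of the two associated topologies from the stability of that family under arbitrary unions and arbitrary intersections.

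First I would record the coincidence of fixed points. By the equivalence of (\ref{item:hatperp=perp})(\ref{iiitem:hatperp=perp4a}) and (\ref{item:hatperp=perp})(\ref{iiitem:hatperp=perp4c}), a subset $P\subseteq\Pi$ satisfies $\widehat{P}=P$ if and only if $\widetilde{P}=P$. Hence the family $\mathcal{P}_\bullet(\Pi)=\{P:\widehat{P}=P\}$ of closed sets of $P\mapsto\widehat{P}$ is literally the same family as $\{P:\widetilde{P}=P\}$ of open sets of $P\mapsto\widetilde{P}$; this is precisely the assertion that the open sets induced by $\widetilde{\phantom{P}}$ are the closed sets induced by $\widehat{\phantom{P}}$.

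Next I would extract two complementary stability properties of $\mathcal{P}_\bullet(\Pi)$. Regarded as the family of closed sets of a closure operator, it is stable under arbitrary intersections, by the general remark recorded in \ref{obse:appendix}. Regarded as the family of open sets of an interior operator, it is stable under arbitrary unions: this is the dual remark, which follows from monotonicity and deflationarity of $\widetilde{\phantom{P}}$, since for open sets $U_j$ one has $U_j=\widetilde{U_j}\subseteq\widetilde{\bigcup_j U_j}\subseteq\bigcup_j U_j$, forcing $\widetilde{\bigcup_j U_j}=\bigcup_j U_j$. Thus $\mathcal{P}_\bullet(\Pi)$ is stable under both arbitrary unions and arbitrary intersections.

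Finally I would conclude for each topology separately. For the topology whose closed sets are $\mathcal{P}_\bullet(\Pi)$, stability of the closed sets under arbitrary unions is equivalent to $\widehat{\bigcup_i P_i}=\bigcup_i\widehat{P_i}$ for every family $\{P_i\}$: the inclusion $\supseteq$ is monotonicity, and $\subseteq$ holds because $\bigcup_i\widehat{P_i}$ is then a closed set containing $\bigcup_i P_i$ and hence contains the smallest such set $\widehat{\bigcup_i P_i}$. This is exactly the Alexandroff condition of \ref{obse:appendix}. Dually, for the topology whose open sets are $\mathcal{P}_\bullet(\Pi)$, stability of the open sets under arbitrary intersections is the Alexandroff condition. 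Hence both topologies are Alexandroff. The only substantive input is the fixed--point coincidence furnished by Theorem \ref{item:hatperp=perp}(4); the remaining steps are formal manipulations of closure and interior operators, so the main thing to monitor is keeping the two dual arguments correctly aligned rather than any real difficulty.
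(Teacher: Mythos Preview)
Your argument is correct and is essentially a fleshed-out version of the paper's one-line observation: the paper simply notes that the coincidence of fixed points (Theorem \ref{item:hatperp=perp}(4)) makes the $\widetilde{\ }$-open sets equal to the $\widehat{\ }$-closed sets and declares both topologies Alexandroff, leaving implicit exactly the two stability facts (arbitrary intersections from the closure side, arbitrary unions from the interior side) that you spell out. Your unpacking of why ``open $=$ closed'' forces the Alexandroff condition on each side is the intended reasoning.
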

\noindent

%\end{enumerate}

\item\emph{Order and Completeness}. \label{item:completeness}\noindent
  If $\mathcal R$ is as above, all the three sets: $\mathcal
  P_{\perp}(\Pi) \subseteq \mathcal P_{\bullet}(\Pi) \subseteq\mathcal
  P(\Pi)$ are complete with the order $\subseteq$.  Indeed, the
  functors $(-)^{\wedge}:\mathcal P(\Pi) \rightarrow \mathcal
  P_\bullet(\Pi)$ and $(-)^{-}:\mathcal P(\Pi) \rightarrow \mathcal
  P_\perp(\Pi)$ are retractions of $\mathcal P(\Pi)$ into $\mathcal
  P_\bullet(\Pi)$ and $\mathcal P_\perp(\Pi)$ respectively and the
  considerations that follow about the form of the product and
  coproduct (or $\operatorname{inf}$ and $\operatorname{sup}$) in
  $\mathcal P_\bullet(\Pi)$ and $\mathcal P_\perp(\Pi)$ are 
  particular cases of general results about the construction of limits
  and colimits in reflexive subcategories --see \cite{kn:bor1} and
  \cite{kn:bor2} pg. 118 and pg. 196 respectively--.
 \begin{enumerate}
\item It is clear that for $\mathcal X \subseteq \mathcal P(\Pi)$,
  $\operatorname{sup}(\mathcal{X})=\bigcup\mathcal{X}\,,
  \,\operatorname{inf}(\mathcal{X})= \bigcap\mathcal{X}$.
\item For $\mathcal X \subseteq \mathcal P_\bullet(\Pi)$,
  $\operatorname{sup}_\bullet (\mathcal X)=\bigcup \mathcal{X}\,
  ,\,\operatorname{inf}_\bullet(\mathcal X)=\bigcap \mathcal{X}$.  The
  property for the $\inf_\bullet$ follows by general considerations,
  the one concerning $\sup_\bullet$ follows from Alexandroff condition. 

\item For $\mathcal X \subseteq \mathcal P_\perp(\Pi)$,
  $\operatorname{sup}_\perp(\mathcal X)=\overline{\bigcup
  \mathcal{X}}\,,\,\operatorname{inf}_\perp(\mathcal X)=\bigcap
  \mathcal{X}$.
This again follows from the general facts mentioned before.

\end{enumerate}  

\item\begin{example}\label{item:nonclosed} Consider the triple
  $(\Lambda,\Pi,\Perp)$ 
  with $\Lambda=\Pi=\mathbb R^3$, and $\Perp$ being the usual perpendicularity
  relation.
%Moreover
 % $\{e_1,e_2,e_3\}$ will be a positive orthonormal basis.
\noindent
In this context for $P \subseteq \mathbb R^3$, $\overline{P}$ is the
linear subspace generated by $P$ and $\widehat{P}$ is the cone with
vertex in the origin generated by $P$. Then $\mathcal P_\perp(\mathbb
R^3)$ is the set of linear subspaces and $\mathcal P_\bullet(\mathbb
R^3)$ is the set of cones. 
\bigskip
\end{example}
\item\label{item:AKS} We give one more step heading towards the definition due
  to Streicher in \cite{kn:streicher} of an $\mathcal{AKS}$.
\begin{defi}\emph{Realizability lattice}.\label{def:totalaks}

A quadruple $\Big(\Lambda,\Pi,\Perp,\operatorname{push}\Big)$
consisting of a pair of sets $\Lambda$ --set of terms-- $\Pi$ --set of
stacks--, a subset $\Perp \subseteq \Lambda \times \Pi$ and a function
$\operatorname{push}: \Lambda \times \Pi \rightarrow \Pi$ is called a
\emph{realizability lattice}.
\medskip

\noindent \emph{Notation}: The family of the realizability lattices
is denoted as $\mathcal{RL}$ and one of its generic
elements will be called $\mathcal R$.

We use also the notation $\operatorname{push}(t,\pi)=t
{\leadsto}\pi=t \cdot \pi$ for $t \in \Lambda$ and $\pi \in \Pi$.
\end{defi}

\item \label{defi:operations}\emph{The operations of application and
  implication: first approximation}.
\noindent
Below we define three pairs of operations.  As the role of the first
and third operations in the formalization by Streicher of
realizability theory (see \cite{kn:streicher}) has been considered in
\cite[Section 2]{kn:ocar}, we concentrate our attention in the new set
$\mathcal P_\bullet(\Pi)$ and the new operations $\ast_\bullet$ and
$\leadsto_\bullet$. The first and third are mentioned only for
comparison reasons.
\begin{defi}\label{item:adempty} 
For $L \in \mathcal P(\Lambda)\,,\,P \in \mathcal P(\Pi)$ we define
the following subsets, given by the \emph{push} operations:
\[
  \begin{array}{lcccllll}
  \leadsto\,\,&:&\!\!\!\mathcal P(\Lambda) \times \mathcal P(\Pi)
  \rightarrow \mathcal P(\Pi)&,& L{\leadsto}P&:=&\{t\cdot\pi: t \in L,
  \pi \in P\}
  &\,\,;\\ 
  \leadsto_\bullet&:&\!\!\!\mathcal P(\Lambda) \times \mathcal
  P(\Pi) \rightarrow \mathcal P_\bullet(\Pi)&,&
  L{\leadsto_\bullet}P&:=&(L \leadsto
  P)^\wedge&\,\,;\\ 
  \leadsto_\perp&:&\!\!\!\mathcal P(\Lambda) 
  \times \mathcal P(\Pi)\rightarrow \mathcal P_\perp(\Pi)&,& 
  L{\leadsto_\perp}P&:=&(L \leadsto P)^{-}
  &
  \end{array}
\]    
For $L \in \mathcal P(\Lambda)\,,\,P \in
  \mathcal P(\Pi)$ we define the following \emph{conductors} of $L$
  into $P$:
\[
  \begin{array}{lccccclll}
   \ast\,\,&:&\!\!\!\mathcal P(\Pi)
  \times \mathcal P(\Lambda) \rightarrow \mathcal P(\Pi)&,&P \ast
  L&:=&\{\pi \in \Pi: L\leadsto \pi \subseteq P\}&=&\{\pi \in 
  \Pi: L\cdot\pi \subseteq P\}\,\,;\\ 
  \ast_\bullet&:&\!\!\!\mathcal P(\Pi) \times \mathcal P(\Lambda)
  \rightarrow \mathcal P_\bullet(\Pi)&,&
  P\ast_\bullet L&:=&\{\pi \in \Pi: L\leadsto \overline{\pi} \subseteq
  P\}&=&\{\pi \in \Pi: L\cdot\overline{\pi} \subseteq P\}\,\,;\\ 
  \ast_\perp&:&\!\!\!\mathcal P(\Pi) \times \mathcal P(\Lambda)
  \rightarrow \mathcal P_\perp(\Pi)&,&
  P \ast_\perp L&:=&\{\pi \in
  \Pi: L\leadsto \pi \subseteq P\}^{-}&=&(P \ast L)^{-} 
  \end{array}
\]    
\end{defi}
\noindent

\begin{obse}\label{obse:redefinition-ast-bullet} By definition of $\ast$,
  $L.\overline{\pi}\subseteq P$ if 
  and only if $\overline{\pi}\subseteq P\ast L$. We conclude
  that $P\ast_\bullet L=(P\ast L)\widetilde{\ }$. 
 
Notice a basic difference in the definition of the pair of 
operations  $(\ast_\perp,\leadsto_\perp)$ and the pair
$(\ast_\bullet,\leadsto_\bullet)$. In the first row the operations
are given in terms of the closure $(-)^-$ while in the second row by
the \emph{interior} operator $(-)\,\,\widetilde{}\,\,$  and the
closure $(-)^\wedge$. 
\begin{center}
\begin{tabular}{c|c}
 $\emph{Conductor}$& $\emph{Push}$\\ 
  \hline &\\ 
  $P \ast_\perp L\quad =\quad (P\ast L)^-$&$L{\leadsto_\perp}P\quad = \quad 
  (L{\leadsto}P)^-$ \\ 
  \hline &\\
  $P\ast_\bullet L\quad =\quad (P \ast L)\,\,\widetilde{}$&$L{\leadsto_\bullet}P
  \quad =\quad (L{\leadsto}P)^\wedge$
\end{tabular}
\end{center}
\end{obse}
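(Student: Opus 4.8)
The statement to establish has three ingredients: the displayed equivalence ``$L\cdot\overline{\pi}\subseteq P$ iff $\overline{\pi}\subseteq P\ast L$'', the identity $P\ast_\bullet L=(P\ast L)\widetilde{\ }$, and the comparison table. The table is not really a claim: it only re-displays the definitions of $\ast_\perp,\leadsto_\perp,\ast_\bullet,\leadsto_\bullet$ from Definition~\ref{item:adempty}, so the plan is to dispose of it in a single sentence and concentrate on the first two assertions.

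For the equivalence, the plan is to unfold $L\leadsto\overline{\pi}$ as a union over its stack coordinate. By Definition~\ref{item:adempty}, $L\leadsto\overline{\pi}=\{t\cdot\sigma:t\in L,\ \sigma\in\overline{\pi}\}=\bigcup_{\sigma\in\overline{\pi}}(L\leadsto\sigma)$, so $L\cdot\overline{\pi}\subseteq P$ holds precisely when $L\leadsto\sigma\subseteq P$ for every $\sigma\in\overline{\pi}$; by the very definition of $P\ast L$ this says $\sigma\in P\ast L$ for every $\sigma\in\overline{\pi}$, i.e.\ $\overline{\pi}\subseteq P\ast L$. That is the claimed equivalence, and it is a purely set-theoretic rearrangement requiring no hypothesis on the polarity.

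Feeding this into the defining formula $P\ast_\bullet L=\{\pi\in\Pi:L\cdot\overline{\pi}\subseteq P\}$ immediately gives $P\ast_\bullet L=\{\pi\in\Pi:\overline{\pi}\subseteq P\ast L\}$, and it then remains to identify the right-hand side with the interior $(P\ast L)\widetilde{\ }=\{\pi\in P\ast L:\overline{\pi}\subseteq P\ast L\}$ of Definition~\ref{defi:interior}. One inclusion is trivial; for the other, if $\overline{\pi}\subseteq P\ast L$ then, since $(-)^-$ is a closure operator and hence extensive (Observation~\ref{obse:appendix}), one has $\pi\in\overline{\pi}\subseteq P\ast L$, so $\pi$ already lies in $(P\ast L)\widetilde{\ }$. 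I do not expect any genuine obstacle here: the whole argument is definition-chasing. The one point that needs attention — and the only place where anything beyond bookkeeping enters — is precisely this use of extensivity of $(-)^-$ to recover $\pi\in P\ast L$ from $\overline{\pi}\subseteq P\ast L$, which is exactly what upgrades the set $\{\pi:\overline{\pi}\subseteq P\ast L\}$ to the interior of $P\ast L$.
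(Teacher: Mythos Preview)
Your proposal is correct and follows essentially the same route as the paper, which treats the observation as immediate: the equivalence is exactly the full adjunction $L\leadsto R\subseteq P\Leftrightarrow R\subseteq P\ast L$ (Paragraph~\ref{item:hap}(1)) specialized to $R=\overline{\pi}$, and the identification with the interior then drops out of the definitions together with $\pi\in\overline{\pi}$. Your explicit use of extensivity of $(-)^{-}$ to pass from $\{\pi:\overline{\pi}\subseteq P\ast L\}$ to $(P\ast L)\widetilde{\ }$ is the one nontrivial step, and you handle it correctly.
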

\begin{obse}\label{obse:table}\ 
\begin{enumerate} 
\item \label{item:table2} 
Applying (\ref{obse:redefinition-ast-bullet}) and
(\ref{item:hatperp=perp})(\ref{iitem:hatperp=perp1}), we get the
following table:  
\medskip
\begin{center}
\begin{tabular}{  c| c }
 $\emph{Conductor}$& $\emph{Push}$\\ 
  \hline &\\ 
  $P \ast_\bullet L
  \quad \subseteq\quad  P \ast L \quad\subseteq\quad P \ast_\perp
  L$&$L \leadsto P 
  \quad \subseteq\quad L \leadsto_\bullet P \quad \subseteq\quad L
  \leadsto_\perp P$ \\ 
\end{tabular}
\end{center}
\medskip
\medskip
%% xxx \ref{item:completeness}, Observation \ref{obse:raro} and the rest
%% follows from the considerations of \ref{item:polarclosure}, item
%% }\eqref{item:propofperpbullet}. 
\item \label{item:apraro} The reflection properties of the closures
  and of the interior operator --see Paragraph 
  \ref{item:polarclosure}, observations \eqref{item:basicpropincl},
  \eqref{item:basicpropinc2} and \eqref{item:interior}-- leads to the
  following tables:
\medskip
\begin{enumerate}
\item If $R\in \mathcal{P}_\perp(\Pi)$, $L\in\mathcal{P}(\Lambda)$ and
$P\in\mathcal{P}(\Pi)$:
\medskip
\medskip 

\begin{tabular}{c|c}
 $\emph{Conductor}$& $\emph{Push}$\\ 
  \hline &\\ 
  $P \ast L \subseteq
  R\quad \Leftrightarrow\quad P \ast_\perp L \subseteq R$ & $L {\leadsto} P
  \subseteq R\quad \Leftrightarrow\quad L {\leadsto_\perp} P \subseteq R$ \\ 
\end{tabular}
\medskip
\medskip
\medskip

\item If $R\in\mathcal{P}_\bullet(\Pi)$, $L\in\mathcal{P}(\Lambda)$ and
$P\in\mathcal{P}(\Pi)$:
\medskip
\medskip

\begin{tabular}{c|c}
 $\emph{Conductor}$& $\emph{Push}$\\ 
  \hline &\\ 
  $R \subseteq P \ast L \quad\Leftrightarrow\quad 
  R \subseteq P \ast_\bullet L $&$L {\leadsto} P \subseteq R
  \quad \Leftrightarrow\quad  L {\leadsto_\bullet} P \subseteq R$\\
\end{tabular}
\end{enumerate}
  %% Indeed, considering $\Phi_{L,Q}(x):=L.x\subseteq Q$, we have that
  %% $\Pi_{\Phi_{L,Q}}=Q\ast L$ and
  %% $\Pi_{(\Phi_{L,Q})_\bullet}=Q\ast_\bullet L$. Applying Observation
  %% \ref{obse:raro}, \eqref{item:rarito} we obtain that $R\subseteq
  %% Q\ast L$ implies $R\subset Q\ast_\bullet L$. The converse
  %% implication is a direct consequence of \eqref{item:table}.
\end{enumerate}
\end{obse}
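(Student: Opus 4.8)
The plan is to obtain every line of both tables by substituting the two concrete sets $P\ast L$ and $L\leadsto P$ into the abstract inclusions and reflection equivalences already recorded in Paragraph \ref{item:polarclosure}. The only bookkeeping needed is the dictionary of definitions: $P\ast_\perp L=(P\ast L)^{-}$, $L\leadsto_\perp P=(L\leadsto P)^{-}$, $L\leadsto_\bullet P=(L\leadsto P)^{\wedge}$, and, by Observation \ref{obse:redefinition-ast-bullet}, $P\ast_\bullet L=(P\ast L)\widetilde{\ }$. With these identities in hand the argument is purely formal, so the statement is really just a transcription.

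For the table in item \ref{item:table2} I would invoke the chain $\widetilde{Q}\subseteq Q\subseteq\widehat{Q}\subseteq\overline{Q}$ from Theorem \ref{item:hatperp=perp}\,\ref{iitem:hatperp=perp1}. Putting $Q=P\ast L$ and keeping the first two inclusions gives $(P\ast L)\widetilde{\ }\subseteq P\ast L\subseteq(P\ast L)^{-}$, which is exactly $P\ast_\bullet L\subseteq P\ast L\subseteq P\ast_\perp L$. Putting $Q=L\leadsto P$ and keeping the last two inclusions gives $L\leadsto P\subseteq(L\leadsto P)^{\wedge}\subseteq(L\leadsto P)^{-}$, i.e.\ $L\leadsto P\subseteq L\leadsto_\bullet P\subseteq L\leadsto_\perp P$.

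For the two tables in item \ref{item:apraro} I would quote the reflectivity statements directly. When $R\in\mathcal P_\perp(\Pi)$, Observation \ref{item:basicpropincl} gives $S\subseteq R\Leftrightarrow\overline{S}\subseteq R$ for any $S\in\mathcal P(\Pi)$; taking $S=P\ast L$ and then $S=L\leadsto P$ yields the two equivalences of table (a). When $R\in\mathcal P_\bullet(\Pi)$, Observation \ref{item:basicpropinc2} gives $S\subseteq R\Leftrightarrow\widehat{S}\subseteq R$, so with $S=L\leadsto P$ we get $L\leadsto P\subseteq R\Leftrightarrow L\leadsto_\bullet P\subseteq R$; and the reflection property of the interior operator in Observation \ref{item:interior} gives $R\subseteq Q\Leftrightarrow R\subseteq\widetilde{Q}$, so with $Q=P\ast L$ (and using once more $P\ast_\bullet L=(P\ast L)\widetilde{\ }$) we get $R\subseteq P\ast L\Leftrightarrow R\subseteq P\ast_\bullet L$, which completes table (b). There is no genuine obstacle; the only thing requiring a little care is to pair each operation with the operator that governs it --- the closure $(-)^{-}$ with $\ast_\perp$ and $\leadsto_\perp$, the topological closure $(-)^{\wedge}$ with $\leadsto_\bullet$, and the interior $(-)\widetilde{\ }$ with $\ast_\bullet$ --- and to pass through Observation \ref{obse:redefinition-ast-bullet} whenever the interior description of $\ast_\bullet$ is the convenient one.
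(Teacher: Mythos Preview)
Your proposal is correct and follows exactly the approach the paper indicates: the statement itself already points to Observation \ref{obse:redefinition-ast-bullet} and Theorem \ref{item:hatperp=perp}\,\ref{iitem:hatperp=perp1} for item \ref{item:table2}, and to the three reflection properties \eqref{item:basicpropincl}, \eqref{item:basicpropinc2}, \eqref{item:interior} for item \ref{item:apraro}, and you have unpacked those references precisely, pairing each operation with the operator that defines it. There is nothing to add or change.
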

%%%%%%%%%%%%%%%%%%%%%%%%%%%%%%%%%%%%%%%%%%%%%%%%%%%%%%%%%%%%%%%%%%%%%%%%%%%%%%% 
\item \emph{The adjunction of the new operators: first
  approximation}.\label{item:conductor} We have the following
  adjunction result for ${\mathcal R}$, a realizability lattice.
\begin{theo}\label{theo:adjunctionbullet}
The maps $\ast_\bullet: {\mathcal P}(\Pi) \times {\mathcal P}(\Lambda)
\rightarrow {\mathcal P}_\bullet(\Pi)\,,\,\leadsto_\bullet: {\mathcal
P}(\Lambda) \times {\mathcal P}(\Pi) \rightarrow {\mathcal P}_\bullet(\Pi)$
satisfy the \emph{adjunction property}: If $P,R \in {\mathcal
P}_\bullet(\Pi)$ and $L \in {\mathcal P}(\Lambda)$, then $L
\leadsto_\bullet R \subseteq P$ if and only if $R \subseteq P
\ast_\bullet L$.
\end{theo}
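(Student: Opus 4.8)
The plan is to establish the equivalence
$L\leadsto_\bullet R\subseteq P\iff R\subseteq P\ast_\bullet L$
by reducing both sides, via the reflection properties already recorded, to a single elementary statement about the raw (unclosed) operations $\leadsto$ and $\ast$. First I would use that $P\in\mathcal P_\bullet(\Pi)$ together with the reflection property for $(\ )^\wedge$ (Observation~\ref{item:basicpropinc2}) to rewrite the left-hand side: since $L\leadsto_\bullet R=(L\leadsto R)^\wedge$ and $P$ is $\bullet$-closed, we have $(L\leadsto R)^\wedge\subseteq P\iff L\leadsto R\subseteq P$. Symmetrically, for the right-hand side I would use that $R\in\mathcal P_\bullet(\Pi)$ and the reflection property for the interior operator (Observation~\ref{item:interior}), recalling from Observation~\ref{obse:redefinition-ast-bullet} that $P\ast_\bullet L=(P\ast L)\widetilde{\phantom{x}}$; this gives $R\subseteq(P\ast L)\widetilde{\phantom{x}}\iff R\subseteq P\ast L$.

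With these two reductions in hand, it remains to prove the plain adjunction
$L\leadsto R\subseteq P\iff R\subseteq P\ast L$,
which is immediate from the very definitions in Definition~\ref{item:adempty}: $R\subseteq P\ast L$ means exactly that every $\pi\in R$ satisfies $L\cdot\pi\subseteq P$, i.e.\ $L\leadsto\pi\subseteq P$ for all $\pi\in R$, and unioning over $\pi\in R$ this says precisely $L\leadsto R=\bigcup_{\pi\in R}(L\leadsto\pi)\subseteq P$. So the chain of equivalences closes up:
$L\leadsto_\bullet R\subseteq P\iff L\leadsto R\subseteq P\iff R\subseteq P\ast L\iff R\subseteq P\ast_\bullet L$.

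I expect the only delicate point to be bookkeeping about \emph{which} closedness hypotheses are actually used on each side: the left equivalence needs $P$ to be $\bullet$-closed (so that the $(\ )^\wedge$-reflection applies with $P$ as the closed target), while the right equivalence needs $R$ to be $\bullet$-closed (so that the interior-operator reflection applies with $R$ as the open source); the hypothesis of the theorem supplies both, and $L$ is allowed to be an arbitrary subset of $\Lambda$ throughout. No Alexandroff or topological-closure property is needed here — only the three reflection identities and the definitional unwinding of $\ast$ and $\leadsto$ — so the argument is short once the reductions are set up in the right order.
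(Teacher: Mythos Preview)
Your proof is correct and follows exactly the same chain of equivalences as the paper's own argument: $L\leadsto_\bullet R\subseteq P\iff L\leadsto R\subseteq P\iff R\subseteq P\ast L\iff R\subseteq P\ast_\bullet L$, using the reflection properties of Observation~\ref{obse:table}\eqref{item:apraro}(b) on the outer steps and the raw adjunction (Paragraph~\ref{item:hap}(1)) in the middle. Your explicit bookkeeping about which closedness hypothesis is used on which side is accurate and slightly more detailed than the paper's terse citation, but the approach is the same.
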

\begin{proof}
In the above situation $L \leadsto _\bullet R \subseteq P$, if
  and only if $L \leadsto R \subseteq P$ if and only if $R \subseteq P
  \ast L$ if and only if $R \subseteq P \ast_\bullet L$ --see
  Observation \ref{obse:table}, \eqref{item:apraro}--.
\end{proof}

\item \label{item:hap} Concerning the adjunction property for the
  other products the following holds as it is proved in \cite[Section
    2]{kn:ocar}.
\begin{enumerate} 
\item The maps $\ast : {\mathcal P}(\Pi) \times {\mathcal P}(\Lambda)
  \rightarrow {\mathcal P}(\Pi)$ and $\leadsto : {\mathcal P}(\Lambda)
  \times {\mathcal P}(\Pi) \rightarrow {\mathcal P}(\Pi)$ , satisfy the
  \emph{full adjunction property}: $L \leadsto R \subseteq P$ if and
  only if $R \subseteq P \ast L$.

\item \begin{enumerate}
\item The maps $\ast_\perp : {\mathcal P}(\Pi) \times {\mathcal
  P}(\Lambda) \rightarrow {\mathcal P}_\perp(\Pi)$ and $\leadsto_\perp :
  {\mathcal P}(\Lambda) \times {\mathcal P}(\Pi) \rightarrow {\mathcal
  P}_\perp(\Pi)$ satisfy the \emph{half adjunction property}: $P,R \in
  {\mathcal P}_\perp(\Pi)$ and $L \in {\mathcal P}(\Lambda)$, and $L
  \leadsto_\perp R \subseteq P$, then $R \subseteq P \ast_\perp L$.
\item The other \emph{half} of the adjunction property can be
  partially recuperated using the combinator $\cE$ as follows --see
  \cite[Section 2,\textsection 2.3, Theorem 2.13]{kn:ocar} (and for
  another proof see Theorem \ref{theo:fulladj} below):
\[\text{If}\quad
  R\quad \subseteq\quad P \ast_\perp L \quad\mbox{then}\quad L
  \leadsto_\perp R\quad \subseteq\quad 
  \{\cE\}^\perp \ast_\perp {}^\perp P.\]
\end{enumerate}
\end{enumerate}
\bigskip
\item\label{item:nonclosed2}
We can complete the Example presented in Paragraph
\ref{item:nonclosed} adding: $\operatorname{push}(v,w)=\langle v,w
\rangle v \wedge w$ --if $v,w \in \mathbb R^3$ where we are denoting
as $\langle v,w \rangle\,,\, v \wedge w \in \mathbb R^3$ the usual
inner and vector product respectively.  Let $\{e_1,e_2,e_3\}$ be a
positive orthonormal basis.  
\begin{enumerate}
\item We construct an example where $P \in {\mathcal P}_\bullet(\Pi)$,
  $L \in {\mathcal P}_\bullet(\Lambda)$ and $P \ast L = P \ast_\bullet
  L \neq P \ast_\perp L$.  Consider $L=\mathbb Re_2$ and $P=\mathbb
  Re_1$. Then: $P \ast L=\{w \in \mathbb R^3:\langle e_2, w\rangle
  e_2 \wedge w \in \mathbb R e_1\}$. If $\langle e_2, w\rangle=0$,
  then $w \in \mathbb R e_1 + \mathbb R e_3$, otherwise $e_2 \wedge w
  \in \mathbb R e_1$ if and only if $w \in \mathbb R e_2 + \mathbb R
  e_3$. Then, $P \ast L=P \ast_\bullet L= (\mathbb R e_1 + \mathbb R
  e_3) \cup (\mathbb R e_2 + \mathbb R e_3) \neq P \ast_\perp L =
  \mathbb R^3$.
\item
Moreover, the full adjunction property does not hold in this situation
for the operators $\leadsto_\perp$ and $\ast_\perp$. Indeed, we have
that if we take $P,L$ as above and $R=\mathbb R(e_1+e_2)$ a direct
computation shows that: $R= \mathbb R(e_1+e_2) \subseteq {P
  \ast_\perp L}=\mathbb R^3$, but $L \leadsto_\perp \mathbb
R(e_1+e_2)=\mathbb Re_3 \not \subseteq \mathbb Re_1=P$.
\item Next we give an example where $L \in {\mathcal P}_\perp(\Lambda)$ and $P
\in {\mathcal P}_\perp(\Pi)$ but $P \ast_\bullet L$ and $P \ast L$
%and $L \bullet P$  
are different.

\noindent  
In the same context than above we take
$\operatorname{push}(v,w)=\langle v,w-w_0 \rangle v \wedge w$ where
$w_0\notin \mathbb R e_1 + \mathbb R e_3$, also consider $L=\mathbb
Re_2$ and $P=\mathbb Re_1$. Then, $P \ast_\bullet L= \mathbb R e_2 +
\mathbb R e_3$, $P \ast L= (\mathbb R e_1 + \mathbb R e_3 + w_0) \cup
(\mathbb R e_2 + \mathbb R e_3)$.
% and $L \bullet P= \mathbb R^3$. 
\end{enumerate}  
\section{Abstract Krivine structures: Operations and combinators}
\label{section:opcomb} 
\noindent
\item\label{item:calculus}  
Recall the definition of an \emph{Abstract Krivine Structure}
a.k.a. ${\mathcal {AKS}}$--see\cite{kn:ocar} where parts of the work of
J.L. Krivine and T. Streicher in \cite{kn:kr2008} and
\cite{kn:streicher} are reformulated.
\begin{defi}\label{defi:aks}Let ${\mathcal R}=(\Lambda,\Pi,\Perp,
  \operatorname{push})$ be an ${\mathcal {RL}}$, and assume that we have
  the following additional elements:
   \begin{enumerate}
\item A function $\operatorname{app}: \Lambda \times \Lambda
  \rightarrow \Lambda$ (denoted also as $\operatorname{app}$ when
  viewed as a map of type $\operatorname{app}:{\mathcal P}(\Lambda)
  \times {\mathcal P}(\Lambda) \rightarrow {\mathcal P}(\Lambda)$). We
  also write $\operatorname{app}(t,s)=ts$ for $t,s \in \Lambda$. We
  use the convention that associates to the left when operating with
  more than three elements,
  \item A set $\mathrm {QP} \subseteq \Lambda$ of ``quasi--proofs'',
    which is closed under application,
\item \label{item:reduction} A fixed pair of elements $\cK,\cS \in
  \mathrm{QP} \subseteq \Lambda$, satisfying the following:
\begin{enumerate}
\item For all $t,s \in \Lambda$ if $t \perp s\cdot\pi$, then $ts \perp
  \pi$;
\item For all $t \perp \pi$ and for all $s \in \Lambda$ we have that
  $\cK \perp t\cdot s\cdot\pi$;
\item If $tu(su) \perp \pi$ for $t,s,u \in \Lambda$, then $\cS \perp
  t\cdot s\cdot u\cdot\pi$.
\end{enumerate}
\end{enumerate}
The structure thus obtained is called an \emph{Abstract Krivine
  Structure} --abbreviated as ${\mathcal {AKS}}$-- and usually ${\mathcal
K}$ will denote a generic ${\mathcal {AKS}}$\footnote{\emph{Strictu
    sensu}, the standard definition of ${\mathcal {AKS}}$ includes
  additional structure --see \cite{kn:report} or
  \cite{kn:streicher}--. We only take the basic elements we need for
  our present considerations.}.
\end{defi}
\item\label{item:AKS2} We introduce a new operation ${\mathcal P}(\Pi)
  \times {\mathcal P}(\Lambda) \rightarrow {\mathcal P}(\Pi)$ that allows
  to express in terms of this operation the basic reduction rule of
  an ${\mathcal {AKS}}$: ``\emph{If $t \perp s\cdot\pi$, then $ts \perp
    \pi$}'' as well as the axioms for $\cK$ and $\cS$.

\begin{defi} We define the operation $\clubsuit: {\mathcal P}(\Pi)
  \times {\mathcal P}(\Lambda) \rightarrow {\mathcal{P}}_{\perp}(\Pi)$
  as: $P \,\clubsuit\, L:= ({}^\perp P L)^\perp$.
\end{defi} 
Observe that $P\,\clubsuit\, L$ is in ${\mathcal{P}}_{\perp}(\Pi)$ because
it is the perpendicular of a subset of $\Lambda$.

This operation is really the ``transfer'' of the operation
$\operatorname{app}$ from ${\mathcal P}(\Lambda)$ to ${\mathcal P}(\Pi)$
through the perpendicularity map, as the diagram below ilustrates:
\[\xymatrix
{{\mathcal P}(\Pi) \times {\mathcal P}(\Lambda)\ar[r]^-{\clubsuitLittle}
  \ar[d]_{{}^\perp(-) \times \operatorname{id}}&{\mathcal
  P}_\perp(\Pi)\\ {\mathcal P}(\Lambda)\times{\mathcal
  P}(\Lambda)\ar[r]_-{\operatorname{app}}&{\mathcal
  P}(\Lambda)\ar[u]_{(-)^\perp} }
\]
\begin{lema}\label{lema:s1} The following statements are equivalent:
  \begin{enumerate} 
  \item For all $t,s \in \Lambda,\, \pi \in \Pi$: if $t \perp s\cdot\pi$
    then $ts \perp \pi$.

  \item $P \ast L \subseteq P \,\clubsuit\, L$ for all $L \in
    {\mathcal P}(\Lambda), P \in {\mathcal P}(\Pi)$.

  \item $P \ast_\perp L \subseteq P \,\clubsuit\, L$ for all $L \in
    {\mathcal P}(\Lambda), P \in {\mathcal P}(\Pi)$.
  \item \label{item:s14} For $P \subseteq \Pi$ and $L \subseteq
    \Lambda$\emph{:} if $t \perp P$ and $\ell \in L$ then $t\ell \perp
    P \ast_\bullet L \subseteq P \ast L$.
\end{enumerate}
\end{lema}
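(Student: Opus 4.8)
The plan is to treat (1)--(4) as four reformulations of the single reduction rule and prove them pairwise equivalent; I would organize this as $(1)\Leftrightarrow(2)$, $(2)\Leftrightarrow(3)$ and $(1)\Leftrightarrow(4)$. In each case the only real work is to translate an elementwise statement about $t,s,\pi$ into a containment between subsets and back, while the passage among the three conductors $\ast,\ast_\perp,\ast_\bullet$ is absorbed cost-free by the reflection properties of the closure and interior operators recalled in Paragraph~\ref{item:polarclosure}.

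For $(1)\Rightarrow(2)$: fix $L\in\mathcal{P}(\Lambda)$, $P\in\mathcal{P}(\Pi)$ and $\pi\in P\ast L$, so that $\ell\cdot\pi\in P$ for every $\ell\in L$; then for any $u\in{}^\perp P$ and $\ell\in L$ we get $u\perp\ell\cdot\pi$, hence $u\ell\perp\pi$ by~(1), and since $u,\ell$ were arbitrary this says $\pi\in({}^\perp P\, L)^\perp=P\,\clubsuit\,L$. For the converse $(2)\Rightarrow(1)$: given $t\perp s\cdot\pi$, specialize~(2) to the singletons $P=\{s\cdot\pi\}$ and $L=\{s\}$; one checks at once that $\pi\in P\ast L$, so $\pi\in P\,\clubsuit\,L=(\,{}^\perp\{s\cdot\pi\}\,\{s\}\,)^\perp$, and since $t\in{}^\perp\{s\cdot\pi\}$ this is precisely $ts\perp\pi$. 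The equivalence $(1)\Leftrightarrow(4)$ runs along the same lines: for $(1)\Rightarrow(4)$ one uses $P\ast_\bullet L\subseteq P\ast L$ (Observation~\ref{obse:table}) to reduce to checking $t\ell\perp\sigma$ for $\sigma\in P\ast L$, which is the computation just made, and $(4)\Rightarrow(1)$ is again the singleton specialization $P=\{s\cdot\pi\}$, $L=\{s\}$ together with $\pi\in P\ast L$.

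The link $(2)\Leftrightarrow(3)$ needs nothing elementwise. Since $P\,\clubsuit\,L=({}^\perp P\, L)^\perp$ is the perpendicular of a subset of $\Lambda$, it lies in $\mathcal{P}_\perp(\Pi)$; and $P\ast_\perp L$ is by definition $\overline{P\ast L}$. Hence the reflective property of $(-)^{-}$ recorded in Observation~\ref{item:basicpropincl} gives, for each fixed $P$ and $L$, the equivalences $P\ast L\subseteq P\,\clubsuit\,L\ \Leftrightarrow\ \overline{P\ast L}\subseteq P\,\clubsuit\,L\ \Leftrightarrow\ P\ast_\perp L\subseteq P\,\clubsuit\,L$, so (2) and (3) are literally the same statement read before and after applying a closure operator.

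The difficulty I expect here is bookkeeping rather than mathematics: keeping track of which conductor (and which push) sits on each side of each inclusion, invoking the auxiliary facts $\pi\in\overline{\{\pi\}}$ and $P\ast_\bullet L\subseteq P\ast L$ in the right direction, and checking that the degenerate cases $L=\varnothing$ or $P=\varnothing$ (where the closures behave in a special way) do not break the singleton specializations. The genuine content is the one-line unwinding of $\perp$, $\operatorname{app}$ and $\operatorname{push}$ that occurs in $(1)\Leftrightarrow(2)$; the rest is formal.
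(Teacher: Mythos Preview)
Your proof is correct and follows the same overall strategy as the paper: unwind $(1)\Leftrightarrow(2)$ elementwise, absorb the passage $(2)\Leftrightarrow(3)$ into the reflection property of $(-)^{-}$ (since $P\,\clubsuit\,L\in\mathcal P_\perp(\Pi)$), and treat $(4)$ separately. Two small differences are worth noting. First, for $(2)\Rightarrow(1)$ the paper specializes to $P=\{t\}^\perp$ and $L=\{s\}$ rather than your $P=\{s\cdot\pi\}$; both choices work for the same reason. Second, the paper attaches $(4)$ to $(3)$ rather than to $(1)$: it simply observes that $(4)$, read as ``$t\ell\perp P\ast L$ for all $t\in{}^\perp P$, $\ell\in L$'', is literally the inclusion ${}^\perp P\,L\subseteq{}^\perp(P\ast L)$, and taking $(-)^\perp$ on both sides gives $P\ast_\perp L\subseteq P\,\clubsuit\,L$, which is $(3)$. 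This is marginally cleaner than re-running the singleton specialization, but the content is identical. One caution on your $(4)\Rightarrow(1)$: it relies on reading the conclusion of $(4)$ as $t\ell\perp P\ast L$ (not merely $t\ell\perp P\ast_\bullet L$), since with $P=\{s\cdot\pi\}$ the stack $\pi$ need not lie in $P\ast_\bullet L$; that stronger reading is indeed the intended one, as the paper's reformulation confirms.
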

\begin{proof}\    
\begin{itemize} 
  \item To prove that (1) implies (2) take $\pi \in P \ast L$, $t\perp P$
  and $\ell\in L$. Since $\pi\in P\ast L$ then~$\ell\cdot\pi\in P$, then
  $t\perp \ell\cdot\pi$. Applying (1) we get $t\ell\perp\pi$ and hence
  $\pi\in P \,\clubsuit\, L$.
 
  To prove that (2) implies (1), just take $L:=\{s\}$ and
  $P:=\{t\}^\perp$.
\item Statements (2) and (3) are equivalent because $P \clubsuit L \in
  {\mathcal P}_\perp(\Pi)$ and $\overline{(-)}$ is a reflection.
  \item Condition (4) can be formulated as the inclusion: ${}^\perp PL
    \subseteq {}^\perp(P \ast L)$ that is clearly equivalent to
    condition (3).
\end{itemize}
\end{proof}
\begin{obse}
In particular, in an $\mathcal{AKS}$ we have the following chain of
inequalities (see Observation \ref{obse:table}): For all $P
\subseteq \Pi$ and $L \subseteq \Lambda$,
\begin{equation}\label{eqn:inequalities2} P \ast_\bullet L \quad
  \subseteq\quad  P
  \ast L \quad \subseteq\quad P \ast_\perp L 
  \quad \subseteq\quad P \,\clubsuit\, L.
\end{equation} 
Notice that the operation $\Box$, can be defined only when we furnish
the original $\mathcal{RL}$ with the necessary additional stucture
necessary to produce an $\mathcal{AKS}$ --i.e. with: the application
map; the set of quasi proofs with its distinguished elements, and the
axioms relating the application with the push through
perpendicularity, that also characterize the distingushed
elements--. This $\Box$--operation is an upper bound for the
operations $\ast, \ast_\bullet$ and $\ast_\perp$, which are more
elementary and defined in the realm of the underlying $\mathcal{RL}$.
\end{obse}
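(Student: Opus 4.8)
The statement to prove is \eqref{eqn:inequalities2}: in an $\mathcal{AKS}$ one has $P\ast_\bullet L\subseteq P\ast L\subseteq P\ast_\perp L\subseteq P\,\clubsuit\, L$ for all $P\subseteq\Pi$ and $L\subseteq\Lambda$. The plan is to split the chain into the part that already holds in an arbitrary realizability lattice and the single inclusion that genuinely uses the $\mathcal{AKS}$ axioms. For the first two inclusions, $P\ast_\bullet L\subseteq P\ast L\subseteq P\ast_\perp L$, I would simply quote Observation~\ref{obse:table}, item~\eqref{item:table2}: they are read off from $\widetilde{Q}\subseteq Q\subseteq\widehat{Q}\subseteq\overline{Q}$ (Theorem~\ref{item:hatperp=perp}, item~\eqref{iitem:hatperp=perp1}) specialized to $Q=P\ast L$, once one recalls that $P\ast_\bullet L$ is the interior $(P\ast L)\widetilde{\ }$ (Observation~\ref{obse:redefinition-ast-bullet}) and $P\ast_\perp L=(P\ast L)^{-}$ (Definition~\ref{item:adempty}). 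No new argument is needed for this part.

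The content is the last inclusion $P\ast_\perp L\subseteq P\,\clubsuit\, L$, and for it I would appeal to Lemma~\ref{lema:s1}. Its condition~(1) -- ``for all $t,s\in\Lambda$ and $\pi\in\Pi$, if $t\perp s\cdot\pi$ then $ts\perp\pi$'' -- is literally one of the defining axioms of an $\mathcal{AKS}$ (Definition~\ref{defi:aks}, item~\eqref{item:reduction}, clause~(a)); and by the equivalence (1)$\Leftrightarrow$(3) of Lemma~\ref{lema:s1} this is exactly the assertion ``$P\ast_\perp L\subseteq P\,\clubsuit\, L$ for all $P\in\mathcal P(\Pi)$ and $L\in\mathcal P(\Lambda)$''. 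Concatenating the three inclusions then gives \eqref{eqn:inequalities2}.

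If one prefers a self-contained derivation that does not route through Lemma~\ref{lema:s1}, I would unfold $P\,\clubsuit\, L=({}^\perp P\,L)^\perp$ and use that $P\,\clubsuit\, L$ is already $\overline{(-)}$-closed, so that it suffices to prove $({}^\perp P)\,L\subseteq{}^\perp(P\ast L)$: given $t\perp P$, $\ell\in L$ and $\pi\in P\ast L$ one has $\ell\cdot\pi\in P$, hence $t\perp\ell\cdot\pi$, hence $t\ell\perp\pi$ by the reduction axiom, i.e.\ $t\ell\perp(P\ast L)$. Applying $\overline{(-)}$ to the resulting inclusion $P\ast L\subseteq P\,\clubsuit\, L$, and using the reflectivity of $\mathcal P_\perp(\Pi)$ in $\mathcal P(\Pi)$ (Observation~\ref{item:basicpropincl}), yields $P\ast_\perp L=\overline{P\ast L}\subseteq\overline{P\,\clubsuit\, L}=P\,\clubsuit\, L$.

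I do not expect a real obstacle, since the inequality is essentially a repackaging of facts already established; the one point I would take care to state explicitly is the bookkeeping of codomains -- $\ast$ lands in $\mathcal P(\Pi)$, $\ast_\bullet$ in $\mathcal P_\bullet(\Pi)$, and $\ast_\perp$ and $\clubsuit$ in $\mathcal P_\perp(\Pi)$ -- so that the passages between a subset and its interior or its $\overline{(-)}$-closure are legitimate and all four members of the chain are compared inside the same ambient complete lattice.
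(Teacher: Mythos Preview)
Your proposal is correct and follows essentially the same route as the paper: the observation is stated there as an immediate consequence of Observation~\ref{obse:table} (for the first two inclusions) together with Lemma~\ref{lema:s1} (for $P\ast_\perp L\subseteq P\,\clubsuit\,L$), the latter being available precisely because clause~(a) of Definition~\ref{defi:aks}\eqref{item:reduction} holds in an $\mathcal{AKS}$. Your optional self-contained derivation of the last inclusion simply unfolds the proof of (1)$\Rightarrow$(2)$\Rightarrow$(3) in Lemma~\ref{lema:s1}, so it adds nothing new but is a harmless elaboration.
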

\bigskip
\item \label{item:morecombinators} Next we recall some constructions
  and results based upon the properties of the basic combinators $\cK,
  \cS$. Most of this appears in \cite[Section 2,\textsection 2.3,
    Lemma 2.12]{kn:ocar}. The introduction of the new operations
  allows some additional precision in the inequalities obtained.
\begin{enumerate}
\item \label{item:newcomb} Define for an ${\mathcal {AKS}}$ called
  ${\mathcal K}$ the following elements of $\mathrm{QP} \subseteq \Lambda$:
\begin{enumerate}
\item $\cI:=\cS\cK\cK$,
\item $\cE:=\cS(\cK(\cS\cK\cK))=\cS(\cK \cI)$.
\item $\cB:=\cS(\cK\cS)\cK$. 
\end{enumerate}
\item \label{item:newcombprop} The elements given above satisfy the
  following conditions for all $t, s, \ell \in \Lambda$, $\pi \in
  \Pi$:
\begin{enumerate}
\item $t \perp \pi$ implies that: $\cI \perp t\cdot \pi$,
\item $t\ell \perp \pi$ implies that: $\cE \perp t\cdot\ell\cdot \pi$.
\item $t\perp (s\ell)\cdot \pi$ implies that: $\cB \perp t\cdot s\cdot
  \ell \cdot \pi$.  
\end{enumerate}
\item The following results hold: 
\begin{enumerate}
\item For all $P \in {\mathcal P}(\Pi),L \in {\mathcal P}(\Lambda)$, then
  ${\cK\!} \perp {}^\perp P \leadsto (L \leadsto P)$.
\item For all $P,Q,R \in {\mathcal P}(\Pi)$, then ${\cS\!}
  \perp {}^\perp P \leadsto \left({}^\perp Q \leadsto \Big({}^\perp R
  \leadsto \big((P \ast {}^\perp R) \ast {}^\perp(Q \ast {}^\perp
  R)\big)\Big)\right)$.
\item \label{item:peprima} For all $P \in {\mathcal P}(\Pi), L \in
  {\mathcal P}(\Lambda)$:
${\cE\!} \perp {}^\perp P \leadsto (L
  \leadsto (P\, \clubsuit\, L))$.
\end{enumerate}
\end{enumerate}
\noindent
As mentioned above the validity of (3): (a), (b) and (c) is proved in
\cite[Section 2,\textsection 2.3, Lemma 2.12]{kn:ocar}.
\begin{lema}\label{lema:peprima}
For all $P \in {\mathcal P}(\Pi), L \in {\mathcal P}(\Lambda)$\emph{:}
\begin{enumerate}
\item ${\cE\!}{\cE\!} \perp {}^\perp P \leadsto_\bullet (L
  \leadsto_\bullet (P\, \clubsuit\, L))$.
\item \label{item:peprimamain} $P\, \clubsuit\, L\quad \subseteq
  \quad (\{\cE\cE\!\}^\perp \ast_\bullet {}^\perp P)\ast_\bullet L
  \quad \subseteq\quad \left(\cE\cE ({}^\perp P)\right)^\perp \ast_\bullet L$.
\end{enumerate}
\end{lema}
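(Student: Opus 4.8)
The plan is to prove (1) by a short explicit computation with the combinator $\cE$ and the reduction rule of the $\mathcal{AKS}$, and then to deduce both inclusions of (2) from (1) by purely formal manipulations (the $\bullet$--adjunction of Theorem~\ref{theo:adjunctionbullet} together with the monotonicity of $\ast_\bullet$).

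For (1), I would first unfold the right-hand side: since $P\clubsuit L\in\mathcal P_\perp(\Pi)\subseteq\mathcal P_\bullet(\Pi)$ one has $L\leadsto_\bullet(P\clubsuit L)=(L\leadsto(P\clubsuit L))^{\wedge}$, so ${}^\perp P\leadsto_\bullet(L\leadsto_\bullet(P\clubsuit L))=\bigl({}^\perp P\leadsto(L\leadsto_\bullet(P\clubsuit L))\bigr)^{\wedge}$. Using that a single term is orthogonal to a stack iff it is orthogonal to its $\overline{(-)}$--closure (Observation~\ref{obse:props-perps}), hence ${}^\perp Q={}^\perp\widehat Q$ for any $Q\subseteq\Pi$ (Theorem~\ref{item:hatperp=perp}), the goal reduces to showing $\cE\cE\perp u\cdot\tau$ for every $u\in{}^\perp P$ and every $\tau\in(L\leadsto(P\clubsuit L))^{\wedge}$. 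Fixing such $u,\tau$, write $\tau\in\overline{\ell\cdot\pi}$ with $\ell\in L$, $\pi\in P\clubsuit L$, and run the chain: (A) $u\ell\perp\pi$, because $u\ell\in{}^\perp P\, L$ and $\pi\in P\clubsuit L=({}^\perp P\, L)^\perp$; (B) $\cE\perp u\cdot\ell\cdot\pi$ by the defining property of $\cE$ from Paragraph~\ref{item:morecombinators} ($t\ell\perp\pi$ implies $\cE\perp t\cdot\ell\cdot\pi$); (C) $\cE u\perp\ell\cdot\pi$ by the reduction rule; (D) $\cE u\perp\overline{\ell\cdot\pi}$, hence $\cE u\perp\tau$; (E) $\cE\perp\cE\cdot u\cdot\tau$, by the $\cE$--property applied to (D); (F) $\cE\cE\perp u\cdot\tau$ by the reduction rule. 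Passing to the $\widehat{(-)}$--closure then gives $\cE\cE\perp{}^\perp P\leadsto_\bullet(L\leadsto_\bullet(P\clubsuit L))$, which is (1). The step I expect to carry the real content is recognizing that one $\cE$ does not suffice: the ``one--$\cE$'' statement $\cE\perp{}^\perp P\leadsto(L\leadsto(P\clubsuit L))$ from Paragraph~\ref{item:morecombinators} only realizes the non-closed conductor, and the second $\cE$ is precisely what allows the argument to be re-entered after the inner closure is absorbed at step~(D).

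For (2), both inclusions should follow formally. For the first one, note that $P\clubsuit L$, $\{\cE\cE\}^\perp\ast_\bullet{}^\perp P$, $L\leadsto_\bullet(P\clubsuit L)$ and $\{\cE\cE\}^\perp$ all lie in $\mathcal P_\bullet(\Pi)$ (the first and last are even in $\mathcal P_\perp(\Pi)$), so Theorem~\ref{theo:adjunctionbullet} applies; one pass gives
\[P\clubsuit L\subseteq\bigl(\{\cE\cE\}^\perp\ast_\bullet{}^\perp P\bigr)\ast_\bullet L\ \Longleftrightarrow\ L\leadsto_\bullet(P\clubsuit L)\subseteq\{\cE\cE\}^\perp\ast_\bullet{}^\perp P,\]
and a second pass on the right-hand side reduces the claim to ${}^\perp P\leadsto_\bullet\bigl(L\leadsto_\bullet(P\clubsuit L)\bigr)\subseteq\{\cE\cE\}^\perp$, i.e.\ exactly part (1). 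For the second inclusion it is enough, since $\ast_\bullet$ is monotone in its first argument, to show $\{\cE\cE\}^\perp\ast_\bullet{}^\perp P\subseteq\bigl(\cE\cE({}^\perp P)\bigr)^\perp$: if $\pi$ lies in the left-hand side then ${}^\perp P\cdot\overline\pi\subseteq\{\cE\cE\}^\perp$, so taking the stack $\pi\in\overline\pi$ yields $\cE\cE\perp u\cdot\pi$ for all $u\in{}^\perp P$, whence $(\cE\cE)u\perp\pi$ by the reduction rule, i.e.\ $\pi\in\bigl(\cE\cE({}^\perp P)\bigr)^\perp$.

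The only laborious part is the bookkeeping in the chain (A)--(F); the conceptual point, and the thing to get right, is that orthogonality of a single term is insensitive to the closures $\overline{(-)}$ and $\widehat{(-)}$, so the $\cE$--statements automatically ``see'' the Alexandroff closure, and a double application of $\cE$ is what carries this through the two nested $\bullet$--closures appearing in ${}^\perp P\leadsto_\bullet(L\leadsto_\bullet(P\clubsuit L))$.
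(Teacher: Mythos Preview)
Your proof is correct and follows essentially the same route as the paper's. For part (1), both you and the paper run the chain $u\ell\perp\pi\Rightarrow\cE\perp u\cdot\ell\cdot\pi\Rightarrow\cE u\perp\ell\cdot\pi$, then use ${}^\perp Q={}^\perp\widehat Q$ to absorb the inner $\bullet$--closure before applying $\cE$ a second time; the paper phrases this by first quantifying over $\ell,\pi$ and then passing to the closure, whereas you fix $\tau\in\overline{\ell\cdot\pi}$ and unwind, but the content is identical. For part (2), the first inclusion is obtained in both proofs by two applications of the $\bullet$--adjunction; for the second inclusion the paper simply invokes the chain $\ast_\bullet\subseteq\clubsuit$ from \eqref{eqn:inequalities2} (so that $\{\cE\cE\}^\perp\ast_\bullet{}^\perp P\subseteq\{\cE\cE\}^\perp\clubsuit{}^\perp P\subseteq(\cE\cE({}^\perp P))^\perp$) and then monotonicity of $\ast_\bullet$, while your direct unpacking is exactly that argument made explicit.
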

\begin{proof}\  
\begin{enumerate}
\item Take $t \perp P$, $\ell \in L$ and $\pi \in ({}^\perp P
  L)^\perp= P\, \clubsuit\, L$, hence $\t \ell \perp \pi$ and it
  follows from Paragraph
  \ref{item:morecombinators}. \eqref{item:newcombprop} that $\cE \perp
  t\cdot \ell \cdot \pi$ that implies that $\cE t \perp \ell \cdot
  \pi$ or in other words that $\cE t \in {}^\perp(L \cdot (P \clubsuit
  L))$.

Hence $\cE t \in {}^\perp(L \cdot (P \clubsuit L))= {}^\perp(L
\leadsto (P \clubsuit L))={}^\perp((L \leadsto (P \clubsuit
L))\,\widehat{}\,\,)= {}^\perp(L \leadsto_\bullet (P \clubsuit L))$
where the second equality comes from the statement in Paragraph
\ref{item:polarclosure}.\eqref{item:hatperp=perp} and the others are
simply the definitions.

Stating this assertion as $\cE t \perp \rho$ for $t \perp P$ and $\rho
\in L \leadsto_\bullet (P \clubsuit L)$, and reasoning as before, we
obtain that $\cE \perp \cE\cdot t \cdot \rho$ and $\cE\cE \perp t\cdot
\rho$.

This means that $\cE\cE \in {}^\perp({}^\perp P\cdot( L
\leadsto_\bullet (P \clubsuit L)))= {}^\perp({}^\perp
P\leadsto_\bullet( L \leadsto_\bullet (P \clubsuit L)))$ where the
last equality is obtained in the same manner than above.

\item If we write the perpendicularity relation just proved as:
  $\{\cE\!\cE\!\}^\perp \supseteq {}^\perp P \leadsto_\bullet (L
  \leadsto_\bullet (P\, \clubsuit\, L))$, and then apply twice the
  full adjunction between $\ast_\bullet$ and $\leadsto_\bullet$ we
  obtain $P\, \clubsuit\, L \subseteq (\{\cE\cE\!\}^\perp
  \ast_\bullet {}^\perp P)\ast_\bullet L$. The second inclusion of
  (2) is a consequence of the inclusion of the first into the last
  term in \eqref{eqn:inequalities2} applied to $\{\cE\cE\!\}^\perp
  \ast_\bullet {}^\perp P$.
\end{enumerate}
\end{proof}

\item \label{item:theadjunctor}

\emph{The adjunctor}.  In this paragraph we show that with the help of
the element $\cE$ and the new operations~$\ast_\bullet$ and
$\leadsto_\bullet$ and $\clubsuitLittle$, we can obtain,
similarly than in \cite{kn:ocar} control over the ``other half'' of
the adjunction between the functors $\ast_\perp$ and $\leadsto_\perp$
--compare with \cite[Section 2,\textsection 2.3, Theorem
  2.13]{kn:ocar}.
\begin{defi}\label{defi:pprima} 
Let ${\mathcal K}$ be an ${\mathcal {AKS}}$, define a map $P \mapsto
\eta P: {\mathcal P}(\Pi) \rightarrow {\mathcal P}_\bullet(\Pi)$ by the
formula: $\eta  P= \{\cE\cE\!\}^\perp \ast_\bullet {}^\perp P$.  The
element $\cE$ that is a particular $\eta$--expansor, will be called an
\emph{adjunctor}.
\end{defi}
\begin{obse}\label{obse:futureref}\ 
\begin{enumerate}
\item For future reference we write down the basic inequality
  involving the definition of $\eta P$ --and that follows from equation
  \eqref{eqn:inequalities2}.
\[\begin{split}
\eta P\quad =\quad \{\cE\cE\}^\perp\ast_\bullet {}^\perp P\quad \subseteq\quad
\{\cE\cE\}^\perp\ast\,^\perp P\quad \subseteq\quad 
\{\cE\cE\}^\perp\ast_\perp\,^\perp P  
\quad\subseteq\\
\subseteq\quad \{\cE\cE\}^\perp\clubsuit{}\,^\perp P\quad =\quad 
(\overline{\{\cE\cE\}}({}^\perp P))^\perp\quad \subseteq\quad (\cE\cE (^\perp
P))^\perp.
\end{split}
\]
\item Taking into account that some of the sets in the above
  inequality are elements of ${\mathcal P}_\perp(\Pi)$, taking
  closures we obtain that: \[\overline{\eta P}\quad \subseteq\quad 
  \{\cE\cE\}^\perp\ast_\perp{}^\perp P\quad \subseteq\quad
  \{\cE\cE\}^\perp\clubsuit{}\,^\perp P=
  (\overline{\{\cE\cE\}}({}^\perp P))^\perp\quad \subseteq\quad (\cE\cE (^\perp
  P))^\perp.\]
\end{enumerate}
\end{obse}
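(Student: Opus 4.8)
The plan is to read off the whole chain of inclusions in item (1) from \eqref{eqn:inequalities2} by a single substitution, and then to obtain item (2) by transporting that chain through the closure operator $\overline{(-)}$.

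For item (1): by Definition \ref{defi:pprima}, $\eta P$ is literally $\{\cE\cE\}^\perp \ast_\bullet {}^\perp P$, so nothing is to be shown for the first equality. Next I would apply the inequalities \eqref{eqn:inequalities2} with the subset of stacks taken to be $\{\cE\cE\}^\perp$ and the subset of terms taken to be ${}^\perp P$; this gives immediately
\[\{\cE\cE\}^\perp \ast_\bullet {}^\perp P \;\subseteq\; \{\cE\cE\}^\perp \ast\, {}^\perp P \;\subseteq\; \{\cE\cE\}^\perp \ast_\perp {}^\perp P \;\subseteq\; \{\cE\cE\}^\perp \clubsuit {}^\perp P,\]
which are the first four members of the displayed chain. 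For the equality $\{\cE\cE\}^\perp \clubsuit {}^\perp P = (\overline{\{\cE\cE\}}({}^\perp P))^\perp$ I would just unfold the definition $Q \clubsuit L = ({}^\perp Q\, L)^\perp$ at $Q = \{\cE\cE\}^\perp$ and $L = {}^\perp P$, and use that ${}^\perp(\{\cE\cE\}^\perp)$ is, by the term-side version of Observation \ref{obse:props-perps} and the definition of $\overline{(-)}$, precisely the closure $\overline{\{\cE\cE\}}$ of the singleton $\{\cE\cE\}$ in $\mathcal P(\Lambda)$. Finally, the last inclusion follows from $\{\cE\cE\} \subseteq \overline{\{\cE\cE\}}$: applying the (monotone) application of subsets gives $\cE\cE({}^\perp P) = \{\cE\cE\}({}^\perp P) \subseteq \overline{\{\cE\cE\}}({}^\perp P)$, and then the order-reversing map $(-)^\perp$ (Observation \ref{obse:props-perps}) turns this around into $(\overline{\{\cE\cE\}}({}^\perp P))^\perp \subseteq (\cE\cE({}^\perp P))^\perp$.

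For item (2): the point is that the last four members of the chain in item (1), namely $\{\cE\cE\}^\perp \ast_\perp {}^\perp P$, $\{\cE\cE\}^\perp \clubsuit {}^\perp P$, $(\overline{\{\cE\cE\}}({}^\perp P))^\perp$ and $(\cE\cE({}^\perp P))^\perp$, all belong to $\mathcal P_\perp(\Pi)$: the first because that is the codomain of $\ast_\perp$, and the others because they are perpendiculars of subsets of $\Lambda$. In particular $\{\cE\cE\}^\perp \ast_\perp {}^\perp P$ is $\perp$-closed, so from $\eta P \subseteq \{\cE\cE\}^\perp \ast_\perp {}^\perp P$ (part (1)) and the reflective property of $\overline{(-)}$ recalled in Observation \ref{item:basicpropincl} we get $\overline{\eta P} \subseteq \{\cE\cE\}^\perp \ast_\perp {}^\perp P$; the remaining inclusions of item (2) are exactly the tail of the chain already established in item (1).

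There is no real obstacle here — the argument is pure bookkeeping once \eqref{eqn:inequalities2} is available. The only points deserving a moment's care are the identification ${}^\perp(\{\cE\cE\}^\perp) = \overline{\{\cE\cE\}}$, which uses the polarity on the term side $\Lambda$ rather than on the stack side $\Pi$ on which Section \ref{section:aksrl} was phrased, and keeping straight which of the sets in the chain are already $\perp$-closed, so that passing to $\overline{(-)}$ fixes them and only enlarges the left-hand side to $\overline{\eta P}$.
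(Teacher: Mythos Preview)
Your proposal is correct and follows exactly the route the paper indicates: substitute $\{\cE\cE\}^\perp$ and ${}^\perp P$ into \eqref{eqn:inequalities2}, unfold the definition of $\clubsuit$ using ${}^\perp(\{\cE\cE\}^\perp)=\overline{\{\cE\cE\}}$, and then for part (2) observe that the tail of the chain already lies in $\mathcal P_\perp(\Pi)$ so that the reflective property of $\overline{(-)}$ yields the bound on $\overline{\eta P}$. The paper gives no further detail beyond this sketch, so your elaboration matches it.
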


The theorem that follows summarizes the inclusion relations obtained
before.
\begin{theo}\label{theo:clubsuitE}
Assume that ${\mathcal K}$ is an ${\mathcal {AKS}}$ and the notations
are as above. Then the following inclusions hold for $P \in {\mathcal
  P}(\Pi)$ and $L \in {\mathcal P}(\Lambda)$:
  \[\begin{split}
  P \ast_\bullet L\quad 
  \subseteq\quad P \ast L\quad \subseteq\quad P \ast_\perp L\quad
  \subseteq\quad P 
  \,\clubsuit\, L \quad\subseteq\\
\subseteq\quad \eta P \ast_\bullet L\quad \subseteq\quad 
  \eta P\ast L 
  \quad\subseteq\quad \eta P \ast_\perp L 
  \end{split}
\]
\end{theo}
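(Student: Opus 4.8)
The plan is to read the seven-term chain as the concatenation of three already-available pieces: a ``lower half'' terminating at $P\,\clubsuit\,L$, the single nontrivial ``crossing'' inclusion $P\,\clubsuit\,L\subseteq \eta P\ast_\bullet L$, and an ``upper half'' which is just the lower-half bookkeeping run again with $\eta P$ in the role of $P$. Nothing new has to be invented: the genuine content is packaged in Lemma~\ref{lema:peprima}, and everything else is reflection/adjunction housekeeping from Section~\ref{section:aksrl}.

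First I would dispose of the lower half $P\ast_\bullet L\subseteq P\ast L\subseteq P\ast_\perp L\subseteq P\,\clubsuit\,L$. The first two inclusions are exactly the conductor column of the table in Observation~\ref{obse:table}(\ref{item:table2}) --- equivalently, the instances $\widetilde{Q}\subseteq Q\subseteq\overline{Q}$ for $Q=P\ast L$, using the identity $P\ast_\bullet L=(P\ast L)\widetilde{\ }$ from Observation~\ref{obse:redefinition-ast-bullet} --- and they hold in any realizability lattice. The third inclusion, $P\ast_\perp L\subseteq P\,\clubsuit\,L$, is where the ${\mathcal{AKS}}$ hypothesis enters: it is the last link of the chain \eqref{eqn:inequalities2}, which follows from Lemma~\ref{lema:s1} once the basic reduction rule ``if $t\perp s\cdot\pi$ then $ts\perp\pi$'' is available, and that rule is built into the definition of an ${\mathcal{AKS}}$ (Definition~\ref{defi:aks}). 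For the upper half $\eta P\ast_\bullet L\subseteq \eta P\ast L\subseteq \eta P\ast_\perp L$, observe that by Definition~\ref{defi:pprima} one has $\eta P\in{\mathcal P}_\bullet(\Pi)\subseteq{\mathcal P}(\Pi)$, so these are once more the two inclusions of Observation~\ref{obse:table}(\ref{item:table2}) applied verbatim to the subset $\eta P$.

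The one genuinely substantive step --- and the place I expect the real work to sit --- is the middle inclusion $P\,\clubsuit\,L\subseteq \eta P\ast_\bullet L$. This is precisely Lemma~\ref{lema:peprima}(\ref{item:peprimamain}) combined with the definition $\eta P=\{\cE\cE\}^\perp\ast_\bullet {}^\perp P$ of Definition~\ref{defi:pprima}: the lemma yields $P\,\clubsuit\,L\subseteq(\{\cE\cE\}^\perp\ast_\bullet {}^\perp P)\ast_\bullet L=\eta P\ast_\bullet L$. Should one wish to inline the argument rather than cite it, it runs exactly as the proof of Lemma~\ref{lema:peprima}: take $\pi\in({}^\perp P\,L)^\perp=P\,\clubsuit\,L$, $t\perp P$ and $\ell\in L$; from $t\ell\perp\pi$ and the defining inequality of $\cE$ in Paragraph~\ref{item:morecombinators}(\ref{item:newcombprop}) one gets $\cE t\in{}^\perp(L\leadsto_\bullet(P\,\clubsuit\,L))$, and a second application of the same inequality gives $\cE\cE\perp t\cdot\rho$ for every $\rho\in {}^\perp P\leadsto_\bullet(L\leadsto_\bullet(P\,\clubsuit\,L))$, i.e. ${}^\perp P\leadsto_\bullet(L\leadsto_\bullet(P\,\clubsuit\,L))\subseteq\{\cE\cE\}^\perp$; applying the full adjunction between $\ast_\bullet$ and $\leadsto_\bullet$ (Theorem~\ref{theo:adjunctionbullet}) twice then rewrites this as $P\,\clubsuit\,L\subseteq(\{\cE\cE\}^\perp\ast_\bullet{}^\perp P)\ast_\bullet L$. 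Splicing the lower half, this crossing inclusion, and the upper half together produces the displayed seven-term chain, which is the assertion of the theorem.
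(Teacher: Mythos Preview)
Your proposal is correct and follows essentially the same route as the paper: the first two and last two inclusions come from the closure/interior bookkeeping of Observation~\ref{obse:table}, the third from Lemma~\ref{lema:s1} via the basic reduction rule, and the crossing inclusion from Lemma~\ref{lema:peprima}(\ref{item:peprimamain}) together with Definition~\ref{defi:pprima}. Your optional inlining of the Lemma~\ref{lema:peprima} argument is a faithful expansion rather than a different strategy.
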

\begin{proof} The first two as well as the last two inclusions follow
  from the general properties of the closure operators,  
the third is the content of Lemma \ref{lema:s1}, that is supported
mainly on the basic reduction rule of an ${\mathcal {AKS}}$
(Definition \ref{defi:aks}, \eqref{item:reduction}). The fourth
follows from Lemma \ref{lema:peprima}.\eqref{item:peprimamain} that is
based on the properties of the combinator $\cE$. See also
\cite[Section 2, \textsection 2.3]{kn:ocar}.
\end{proof}
\bigskip
\item \label{item:recupadj} Next we show that in the case that we work
  on ${\mathcal P}_\perp$, we recuperate partially the adjunction
  property --compare with Paragraph \ref{item:conductor}--. The name
  \emph{adjunctor} for the combinator $\cE$ is justified by its role
  in the inclusions below. See \cite[Section 2, \textsection 2.3,
    Theorem 2.13]{kn:ocar} for a proof that does not use the new
  products introduced in this work, but that is more precise in the
  sense that uses the combinator $\cE$ instead of $\cE\cE$ as it is
  used below.

\begin{theo}\label{theo:fulladj} Assume that $L \in {\mathcal
  P}_\perp(\Lambda), P,R \in {\mathcal P}_\perp(\Pi)$. Then 
\begin{enumerate}
\item If $L \leadsto_\perp R\quad \subseteq\quad P \quad\text{then}\quad
 R\quad \subseteq\quad P \ast_\perp L$.
\item If $R\quad \subseteq\quad P \ast_\perp L \quad\text{then}\quad L
\leadsto_\perp R\quad \subseteq\quad \overline{\eta P}\quad =\quad
\overline{\{\cE\cE\}^\perp 
  \ast_\bullet {}^\perp P}\quad  \subseteq\quad 
  \{\cE\cE\}^\perp\ast_\perp{}^\perp P$. 
 \end{enumerate}  
\end{theo}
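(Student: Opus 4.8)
The plan is to prove the two implications separately, in each case pushing the hypothesis through the reflection and adjunction machinery already established; no new estimates are needed beyond Theorems~\ref{theo:adjunctionbullet} and~\ref{theo:clubsuitE}. For (1) the hypothesis $L\in{\mathcal P}_\perp(\Lambda)$ is not used at all. Since $P\in{\mathcal P}_\perp(\Pi)$, the reflection property recorded in Observation~\ref{obse:table}\eqref{item:apraro} lets me trade $\leadsto_\perp$ for $\leadsto$: $L\leadsto_\perp R\subseteq P$ if and only if $L\leadsto R\subseteq P$. By the full adjunction between $\ast$ and $\leadsto$ (Paragraph~\ref{item:hap}, item (1)) this is in turn equivalent to $R\subseteq P\ast L$, and since $P\ast L\subseteq P\ast_\perp L$ by the basic inclusion table in Observation~\ref{obse:table}\eqref{item:table2}, we get $R\subseteq P\ast_\perp L$, which is (1).

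For (2) the essential point is that inside ${\mathcal P}_\perp(\Pi)$ one only has ``half'' of the adjunction (Paragraph~\ref{item:hap}, item (2)), so the argument must detour through ${\mathcal P}_\bullet(\Pi)$, where Theorem~\ref{theo:adjunctionbullet} supplies a genuine adjunction, at the price of replacing $P$ by its $\eta$-expansion. First I note that $R\in{\mathcal P}_\perp(\Pi)\subseteq{\mathcal P}_\bullet(\Pi)$ by Theorem~\ref{item:hatperp=perp}\eqref{iitem:hatperp=perp3}, while $\eta P\in{\mathcal P}_\bullet(\Pi)$ by construction, so the hypotheses of Theorem~\ref{theo:adjunctionbullet} with target $\eta P$, source $R$ and parameter $L\in{\mathcal P}(\Lambda)$ are all met. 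From the hypothesis $R\subseteq P\ast_\perp L$ together with the chain of Theorem~\ref{theo:clubsuitE} --- in particular $P\ast_\perp L\subseteq P\,\clubsuit\,L\subseteq\eta P\ast_\bullet L$ --- we obtain $R\subseteq\eta P\ast_\bullet L$; the $\ast_\bullet$--$\leadsto_\bullet$ adjunction then yields $L\leadsto_\bullet R\subseteq\eta P$. Since $L\leadsto R\subseteq L\leadsto_\bullet R$ (Observation~\ref{obse:table}\eqref{item:table2}) and the closure $\overline{(-)}$ is monotone, $L\leadsto_\perp R=\overline{L\leadsto R}\subseteq\overline{\eta P}$. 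The displayed equality $\overline{\eta P}=\overline{\{\cE\cE\}^\perp\ast_\bullet{}^\perp P}$ is just Definition~\ref{defi:pprima}, and the final inclusion $\overline{\eta P}\subseteq\{\cE\cE\}^\perp\ast_\perp{}^\perp P$ is already recorded in Observation~\ref{obse:futureref} (item (2)).

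The only genuine content, and the step to treat with care, is the combination in (2) of Theorem~\ref{theo:clubsuitE} --- which already places $P\,\clubsuit\,L$, hence $P\ast_\perp L$, below $\eta P\ast_\bullet L$ --- with the full adjunction of Theorem~\ref{theo:adjunctionbullet}; everything else is bookkeeping with the reflection inequalities relating $(-)^-$, $(-)^\wedge$ and $(-)\,\widetilde{\ }\,$. Before invoking Theorem~\ref{theo:adjunctionbullet} I would explicitly verify the membership conditions ($R\in{\mathcal P}_\bullet(\Pi)$, $\eta P\in{\mathcal P}_\bullet(\Pi)$, $L\in{\mathcal P}(\Lambda)$), since that structural observation --- that passing to ${\mathcal P}_\bullet$ restores the ``other half'' of the adjunction lost in ${\mathcal P}_\perp$ --- is precisely what makes the theorem work.
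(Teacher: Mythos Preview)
Your proof is correct and follows essentially the same route as the paper's: for (2) you use Theorem~\ref{theo:clubsuitE} to push $R$ below $\eta P\ast_\bullet L$, invoke the $\ast_\bullet$--$\leadsto_\bullet$ adjunction (Theorem~\ref{theo:adjunctionbullet}), and then take closures, exactly as the paper does. You supply more detail than the paper (in particular a full argument for (1), which the paper omits, and explicit membership checks before applying Theorem~\ref{theo:adjunctionbullet}), but there is no substantive difference in strategy.
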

\begin{proof} We concentrate in the second assertion: it follows from 
Theorem \ref{theo:clubsuitE}, that $R \subseteq P \ast_\perp L
\subseteq P \,\clubsuit\, L \subseteq \eta P \ast_\bullet L$. From the
adjunction property for $\ast_\bullet$ and $\leadsto_\bullet$ --see
Paragraph \ref{item:conductor}-- we deduce that $L \leadsto_\bullet R
\subseteq \eta P$. Taking double perpendicularity we deduce that $L
\leadsto_\perp R \subseteq \overline{\eta P}$ and the rest of the
inequalities follows from Observation \ref{obse:futureref}.
\end{proof}
\bigskip
\item\label{item:rewritingop}
\noindent The definitions of the basic operations given before: the
push--type operations in ${\mathcal P}(\Lambda) \times {\mathcal
  P}(\Pi) \rightarrow {\mathcal P}(\Pi)$ and the conductor--type
operations in ${\mathcal P}(\Pi) \times {\mathcal P}(\Lambda)
\rightarrow {\mathcal P}(\Pi)$, can be transferred to ${\mathcal
  P}(\Pi) \times {\mathcal P}(\Pi) \rightarrow {\mathcal P}(\Pi)$ by
taking perpendiculars in the first variable or in the second --in
other words composing with the map $(-)^\perp : {\mathcal P}(\Lambda)
\rightarrow {\mathcal P}(\Pi)$ in the first or in the second
variable--.  Next we look at the basic properties of this redefined
operations.
%Assume that $P,Q,R \in {\mathcal P}_\perp(\Pi)$. 
\begin{defi}\label{defi:newop} Given the push--type functions
  $\leadsto,\leadsto_\bullet,\leadsto_\perp: {\mathcal P}(\Lambda)
  \times {\mathcal P}(\Pi) \rightarrow {\mathcal P}(\Pi)$ we define
  the new operations: $\to,\to_\bullet,\to_\perp: {\mathcal P}(\Pi)
  \times {\mathcal P}(\Pi) \rightarrow {\mathcal P}(\Pi)$ as shown in
  the table below. 
\medskip
\medskip  
\begin{center}
\begin{tabular}{rclcrcl}
  $P$& $\rightarrow$&$
  Q$&$:=$&${}^\perp P$&$\leadsto$&$Q$\\ 
  $P$&$\rightarrow_\bullet$&$Q$&$:=$&${}^\perp P$&$\leadsto_\bullet$&$Q$\\ 
  $P$&$\rightarrow_\perp$&$Q$&$:=$&${}^\perp P$&$\leadsto_\perp$&$Q$  
\end{tabular}
\end{center} 
Given the conductor--type operations $\ast, \ast_\bullet, \ast_\perp$
and the $\clubsuit$--operation, all of type ${\mathcal P}(\Pi)
\times {\mathcal P}(\Lambda) \rightarrow {\mathcal P}(\Pi)$; we define
the new operations: $\circ, \circ_\bullet, \circ_\perp, \diamond:
{\mathcal P}(\Pi) \times {\mathcal P}(\Pi) \rightarrow {\mathcal
  P}(\Pi)$ as shown in the table below.
\medskip
\medskip
\begin{center}
\begin{tabular}{rclcrcl}
  $P$&$\circ$&$Q$&$:=$&$P$&$\ast$&
  ${}^\perp Q$\\
  $P$&$\circ_\bullet$&$Q$&$:=$&$P$& 
  $\ast_\bullet$&${}^\perp Q$\\ 
  $P$&$\circ_\perp$&$Q$&$:=$&$P$&$\ast_\perp$&${}^\perp Q$\\ 
  $P$&$\diamond$&
  $Q$&$:=$&$P$&$\clubsuit$&${}^\perp Q$
\end{tabular}
\end{center} 
\end{defi}
Thus $\eta P= \{\cE\cE\}^\perp \circ_\bullet P$.
We reformulate the properties of the new operations:
\begin{enumerate}
\item \label{item:monotony2} \emph{Monotony.} The operations
  $\circ,\,\circ_\bullet,\,\circ_\perp,\, \diamond$ are monotone in
  both variables and
  $\rightarrow,\,\rightarrow_\bullet,\,\rightarrow_\perp$ are
  antitone in the first variable and monotone in the second.
\item \label{item:adj} \emph{Adjunctions.}
\begin{eqnarray*} Q \rightarrow R \subseteq
  P\quad&\text{if and only if}\quad R \subseteq P \,\circ\, Q; \\ Q
  \rightarrow_\bullet R \subseteq P\quad&\text{\,if and only if}\quad
  R \subseteq P \,\circ_\bullet Q.
\end{eqnarray*} 
\item \label{item:moreadj}\emph{Two half adjunctions.}
\begin{eqnarray*}
\quad \text{If}\quad &Q \rightarrow_\perp R \subseteq P\quad
\text{then}&\quad R \subseteq P \,\circ_\perp Q;\\ \quad
\text{If}\quad &R \subseteq P \,\circ_\perp Q\quad \text{then}&\quad
Q\rightarrow_\perp R \subseteq \overline{\eta P}=
\overline{\{\cE\cE\}^\perp \,\circ_\bullet P} \subseteq
\{\cE\cE\}^\perp \,\circ_\perp P.
\end{eqnarray*} 

\item \label{item:ineq} \emph{Inclusion relations.}
\begin{equation*} 
  \begin{split}
  P \,\circ_\bullet Q\quad  
  \subseteq\quad P \,\circ\, Q\quad \subseteq\quad P\,\circ_\perp
  Q\quad \subseteq\quad P 
  \diamond Q\quad \subseteq \\
  \subseteq\quad \eta P \,\circ_\bullet Q\quad
  \subseteq\quad 
  \eta P \,\circ\, Q 
  \quad \subseteq\quad \eta P\,\circ_\perp Q ;
  \end{split}
\end{equation*}
\medskip 
\begin{equation*}P \rightarrow Q \subseteq P
  \rightarrow_\bullet Q \subseteq P \rightarrow_\perp Q.
\end{equation*}   
\item \label{item:comb2} \emph{Properties of the combinators}
\begin{eqnarray}\label{eqn:comb2}
\cK\perp&P\to(Q\to P)\\ \cS\perp&P\to(Q\to (R\to((P\circ
R)\circ(Q\circ R)))\\ \cE\perp&P\to(Q\to(P\diamond Q))
\end{eqnarray}

\end{enumerate} 

\begin{defi} \label{defi:finaloperations}Assume that ${\mathcal K}$ is
  an ${\mathcal {AKS}}$ 
\begin{enumerate}
\item The operations $\circ, \rightarrow: {\mathcal P}(\Pi) \times
  {\mathcal P}(\Pi) \rightarrow {\mathcal P}(\Pi)$ are called the
  \emph{application} and the \emph{implication} in~${\mathcal
    P}(\Pi)$.
\item The operations $\circ_\bullet, \rightarrow_\bullet: {\mathcal
  P}(\Pi) \times {\mathcal P}(\Pi) \rightarrow {\mathcal P}(\Pi)$ when
  restricted to ${\mathcal P}_\bullet(\Pi) \times {\mathcal
    P}_\bullet(\Pi)$ yield maps from ${\mathcal P}_\bullet(\Pi) \times
  {\mathcal P}_\bullet(\Pi) \rightarrow {\mathcal P}_\bullet(\Pi)$ and
  are called the \emph{application} and the \emph{implication} in
  ${\mathcal P}_\bullet(\Pi)$.
\item The operations $\circ_\perp, \rightarrow_\perp: {\mathcal
  P}(\Pi) \times {\mathcal P}(\Pi) \rightarrow {\mathcal P}(\Pi)$ when
  restricted to ${\mathcal P}_\perp(\Pi) \times {\mathcal
    P}_\perp(\Pi)$ yield maps from ${\mathcal P}_\perp(\Pi) \times
  {\mathcal P}_\perp(\Pi) \rightarrow {\mathcal P}_\perp(\Pi)$ and are
  called the \emph{application} and the \emph{implication} in
  ${\mathcal P}_\perp(\Pi)$.
\end{enumerate}
\end{defi}
\item\label{item:productsequal}
\noindent
In accordance with Definition \ref{defi:sharp} the implication of a
complete $\qoca$, induces an application like operation that we called
$\sharp$. Given ${\mathcal K} \in {\mathcal {AKS}}$, as defined in
\cite{kn:ocar}, there is a $\koca$ ${\mathcal A}_{{\mathcal K}}$ which
is associated to ${\mathcal K}$. The theorem that follows shows that
there is a close connection between the $\sharp$ defined in ${\mathcal
  A}_{\mathcal K}$ and the operation $\circ_\bullet$ of ${\mathcal
  K}$. The corresponding considerations for the case of the other
operations are not too interesting because in those cases, as there is
a full adjunction, the results of Theorem \ref{theo:miquel} guarantee
that the $\sharp$--product and the $\circ$--product, coincide.
\begin{theo} \label{theo:productsequal} For an $\qoca$ of the form
  ${\mathcal A}_{\mathcal K}$ where ${\mathcal K}$ is an ${\mathcal
    {AKS}}$ and for all $a,b \in A_\perp={\mathcal P}_\perp(\Pi)$ then
  $\overline{a\circ_\bullet b}= a\,\sharp\,b$. See Paragraph
  $\ref{item:conductor}$ for the definition of $a\circ_\bullet b$ and
  Definition \ref{defi:sharp} for the meaning of $a\sharp b$.
\end{theo}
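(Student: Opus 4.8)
The plan is to unwind the right-hand side and then establish the two inclusions of sets. Unwinding Definition~\ref{defi:sharp} inside ${\mathcal A}_{\mathcal K}$ --- whose implication is $\to_\perp$ and in which, by Paragraph~\ref{item:completeness}, infima are computed as $\overline{\bigcup(\cdot)}$ --- one gets
\[
a\,\sharp\,b\ =\ \overline{\bigcup{\mathcal C}},\qquad\text{where}\qquad
{\mathcal C}:=\{\,c\in{\mathcal P}_\perp(\Pi)\ :\ b\to_\perp c\ \subseteq\ a\,\}.
\]
So the goal becomes to compare $\overline{\bigcup{\mathcal C}}$ with $\overline{a\circ_\bullet b}$, where $a\circ_\bullet b=a\ast_\bullet{}^\perp b$ by Definition~\ref{defi:newop}.

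For $\overline{a\circ_\bullet b}\subseteq a\,\sharp\,b$ I would argue elementwise. Let $\pi\in a\circ_\bullet b=a\ast_\bullet{}^\perp b$; by Definition~\ref{item:adempty} this says precisely ${}^\perp b\leadsto\overline{\pi}\subseteq a$. Applying the closure $(\ )^{-}$ and using $\overline a=a$ gives $b\to_\perp\overline{\pi}=\overline{\,{}^\perp b\leadsto\overline{\pi}\,}\subseteq a$, i.e.\ $\overline{\pi}\in{\mathcal C}$; as $\pi\in\overline{\pi}$, we get $\pi\in\bigcup{\mathcal C}$. Hence $a\circ_\bullet b\subseteq\bigcup{\mathcal C}$, and closing both sides yields $\overline{a\circ_\bullet b}\subseteq\overline{\bigcup{\mathcal C}}=a\,\sharp\,b$.

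For the reverse inclusion it suffices, $\overline{a\circ_\bullet b}$ being closed, to show $c\subseteq\overline{a\circ_\bullet b}$ for each $c\in{\mathcal C}$. Fix $c\in{\mathcal C}$. From $b\to_\bullet c\subseteq b\to_\perp c$ (Paragraph~\ref{item:rewritingop}, item~(\ref{item:ineq}), which rests on $(\ )^{\wedge}\subseteq(\ )^{-}$ from Theorem~\ref{item:hatperp=perp}, part~(\ref{iitem:hatperp=perp1})) together with $b\to_\perp c\subseteq a$, we obtain ${}^\perp b\leadsto_\bullet c\subseteq a$. Since $a,c\in{\mathcal P}_\perp(\Pi)\subseteq{\mathcal P}_\bullet(\Pi)$, Theorem~\ref{theo:adjunctionbullet} --- the full adjunction of $\ast_\bullet$ and $\leadsto_\bullet$ --- applies with $L={}^\perp b$, $R=c$, $P=a$, giving $c\subseteq a\ast_\bullet{}^\perp b=a\circ_\bullet b\subseteq\overline{a\circ_\bullet b}$. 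Therefore $\bigcup{\mathcal C}\subseteq\overline{a\circ_\bullet b}$, and closing yields $a\,\sharp\,b=\overline{\bigcup{\mathcal C}}\subseteq\overline{a\circ_\bullet b}$.

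The only real obstacle I foresee is bookkeeping: keeping straight which of the two closures $(\ )^{-}$, $(\ )^{\wedge}$ is active at each step, and how the order of ${\mathcal A}_{\mathcal K}$ turns the $\inf$ of Definition~\ref{defi:sharp} into $\overline{\bigcup(\cdot)}$. Conceptually the argument rests entirely on Theorem~\ref{theo:adjunctionbullet}: it is the genuine two-sided adjunction of $\ast_\bullet$ and $\leadsto_\bullet$ on ${\mathcal P}_\bullet(\Pi)$ --- of which, for $\ast_\perp$ and $\leadsto_\perp$, only one half survives --- that pins the value of $\sharp$ down to $\overline{a\circ_\bullet b}$ rather than to $a\circ_\perp b$. (Theorem~\ref{theo:miquel}\,(1), by contrast, yields only the weaker inclusion $a\,\sharp\,b\subseteq ab$, with $ab$ the application of ${\mathcal A}_{\mathcal K}$.)
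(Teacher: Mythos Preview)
Your proof is correct and follows essentially the same strategy as the paper's: both unwind $a\,\sharp\,b$ as $\overline{\bigcup\{c\in\mathcal P_\perp(\Pi):b\to_\perp c\subseteq a\}}$ and then compare this set with $a\circ_\bullet b$ via an adjunction. The only cosmetic difference is that the paper first rewrites the defining condition through the chain $b\to_\perp c\subseteq a\Leftrightarrow b\to c\subseteq a\Leftrightarrow c\subseteq a\circ b\Leftrightarrow c\subseteq a\circ_\bullet b$ (using the plain $(\circ,\to)$ adjunction and the reflection properties of both closures), whereas you argue each inclusion separately and invoke the $(\circ_\bullet,\to_\bullet)$ adjunction of Theorem~\ref{theo:adjunctionbullet} directly; both routes are equally short.
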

\begin{proof} Consider: 
\[
\begin{array}{r}
 a \,\sharp\, b = \operatorname{inf}\{c \in {\mathcal P}_\perp(\Pi): a
 \leq b \rightarrow_\perp c\}=\sup_\perp\{c \in {\mathcal
     P}_\perp(\Pi): b \rightarrow_\perp c \subseteq
   a\}=\\
   \\
   \\
   \overline{\bigcup\{c \in {\mathcal P}_\perp(\Pi): b \rightarrow_\perp
   c \subseteq 
   a\}}=\overline{\bigcup\{c \in {\mathcal P}_\perp(\Pi): b
   \rightarrow c \subseteq a\}}=\\
 \\
 \\
\overline{\bigcup\{c \in {\mathcal
     P}_\perp(\Pi): c \subseteq a \circ b\}}= \overline{\bigcup\{c \in
   {\mathcal P}_\perp(\Pi): c \subseteq a \circ_\bullet b\}},
\end{array}
\] 
where the fourth equality is justified by the
considerations in paragraph \ref{item:completeness}, item (3), the
fifth is consequence of \ref{obse:table}, \ref{item:apraro} item (b),
the sixth is by 
\ref{item:hap} item (1) and the last one is consequence of
\ref{obse:table}, \ref{item:apraro}, item (1). 

\noindent 
Now, if $\pi \in a\circ_\bullet b$ then $\overline{\pi} \subseteq
a\circ_\bullet b$ and it follows by the above equality that
$\overline{\pi} \subseteq a\, \sharp\, b$, and hence $a\circ_\bullet b
\subseteq a\, \sharp\, b$ and also $\overline{a\circ_\bullet b}
\subseteq a\, \sharp\, b$.

\noindent
Conversely, it is clear that $\bigcup\{c \in {\mathcal P}_\perp(\Pi):
c \subseteq a \circ_\bullet b\} \subseteq a \circ_\bullet b$, and
taking closures we obtain that $a\, \sharp\, b \subseteq \overline{a
  \circ_\bullet b}$.
\end{proof}

\begin{obse}\  
\begin{enumerate}
\item Notice that in accordance with Paragraph \ref{item:nonclosed},
  the product $a \circ_\bullet b$ need not be closed with respect to
  the bar operator. Hence in general the operations $\circ_\bullet$
  and $\sharp$ are different. The above proof guarantees that in
  general for all $a,b \in {\mathcal P}_\perp(\Pi)$, we have that $a
  \circ_\bullet b \subseteq a\,\sharp\, b$.
\item In general for all $a,b \in {\mathcal P}_\perp(\Pi)$, we have
  that: \[a \circ_\bullet b \subseteq a\,\sharp\, b=
  \overline{a\circ_\bullet b} \subseteq a \circ b.\]
\end{enumerate}
\end{obse}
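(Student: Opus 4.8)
The plan is to compute both sides of the claimed identity inside the concrete structure $\mathcal A_{\mathcal K}$. Recall that $\mathcal A_{\mathcal K}$ has carrier $\mathcal P_\perp(\Pi)$, application and implication given by $\circ_\perp$ and $\to_\perp$ (Definition \ref{defi:finaloperations}), and --- by the discussion of reflective subcategories in Paragraph \ref{item:completeness} --- infima and suprema computed as $\inf_\perp\mathcal X=\bigcap\mathcal X$ and $\sup_\perp\mathcal X=\overline{\bigcup\mathcal X}$. By Definition \ref{defi:sharp},
\[a\,\sharp\,b=\inf_\perp\{c\in\mathcal P_\perp(\Pi):a\subseteq b\to_\perp c\}\in\mathcal P_\perp(\Pi),\]
so $a\,\sharp\,b$ is in particular $\overline{(-)}$--closed, and it remains to squeeze it between $\overline{a\circ_\bullet b}$ and itself by two inclusions.

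The inclusion $\overline{a\circ_\bullet b}\subseteq a\,\sharp\,b$ is the easy one: the chain of inclusions of Paragraph \ref{item:rewritingop}(\ref{item:ineq}) gives $a\circ_\bullet b\subseteq a\circ_\perp b$, and since $a\circ_\perp b=a\,b$ is the application of the $\qoca$ $\mathcal A_{\mathcal K}$, Theorem \ref{theo:miquel}(1) yields $a\circ_\bullet b\subseteq a\circ_\perp b\subseteq a\,\sharp\,b$; applying $\overline{(-)}$ and using that $a\,\sharp\,b$ is closed gives $\overline{a\circ_\bullet b}\subseteq a\,\sharp\,b$.

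For the reverse inclusion I would first rewrite the defining infimum as a supremum of closed sets lying below $a\circ_\bullet b$:
\[a\,\sharp\,b=\sup_\perp\{c\in\mathcal P_\perp(\Pi):b\to_\perp c\subseteq a\}=\overline{\bigcup\{c\in\mathcal P_\perp(\Pi):c\subseteq a\circ_\bullet b\}}.\]
The second equality is obtained by massaging the index set: since $a$ is closed and $b\to_\perp c=\overline{b\to c}$, the condition $b\to_\perp c\subseteq a$ is equivalent to $b\to c\subseteq a$; the full adjunction between $\circ$ and $\to$ (Paragraph \ref{item:hap}(1), read through Definition \ref{defi:newop}) turns this into $c\subseteq a\circ b$; and as $c\in\mathcal P_\perp(\Pi)\subseteq\mathcal P_\bullet(\Pi)$, the reflection of Observation \ref{obse:table}(\ref{item:apraro})(b) turns it into $c\subseteq a\circ_\bullet b$; finally $\sup_\perp\mathcal X=\overline{\bigcup\mathcal X}$ produces the right-hand side. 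Granting this, the inclusion $a\,\sharp\,b\subseteq\overline{a\circ_\bullet b}$ is immediate, because $\bigcup\{c\in\mathcal P_\perp(\Pi):c\subseteq a\circ_\bullet b\}\subseteq a\circ_\bullet b$ and $\overline{(-)}$ is monotone. Together with the previous paragraph this gives $\overline{a\circ_\bullet b}=a\,\sharp\,b$.

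I expect the genuine difficulty to be the first equality in that last display --- the passage from the defining $\inf_\perp$ to the $\sup_\perp$ over the closed sets below $a\circ_\bullet b$. This is not a formal consequence of the $\qoca$ axioms (the analogous identity already fails in a general Heyting preorder), so its justification must use the reflective structure of $\mathcal P_\perp(\Pi)$ inside $\mathcal P(\Pi)$ and the compatibility of $\to_\perp$ with $\sup_\perp$ recorded in Paragraphs \ref{item:completeness} and \ref{item:hap}. The remaining steps are routine manipulations with the reflection identities already established, and the reason the closure $\overline{(-)}$ cannot be dropped from the statement is precisely that $a\circ_\bullet b$ itself need not be closed, as the example of Paragraph \ref{item:nonclosed} shows.
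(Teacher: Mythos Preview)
Your argument for the inclusion $\overline{a\circ_\bullet b}\subseteq a\,\sharp\,b$ is reversed.  In $\mathcal A_{\mathcal K}$ the order is $a\leq b\Leftrightarrow a\supseteq b$, so Theorem~\ref{theo:miquel}(1) ($ab\leq a\,\sharp\,b$) translates into the set--theoretic inclusion $a\,\sharp\,b\subseteq a\circ_\perp b$, \emph{not} $a\circ_\perp b\subseteq a\,\sharp\,b$.  Hence the chain $a\circ_\bullet b\subseteq a\circ_\perp b\subseteq a\,\sharp\,b$ is unjustified at the second step, and the ``easy'' half of your proof fails.  The paper obtains this direction from the very description you compute for the other half: once one knows $a\,\sharp\,b=\overline{\bigcup\{c\in\mathcal P_\perp(\Pi):c\subseteq a\circ_\bullet b\}}$, one uses that $a\circ_\bullet b\in\mathcal P_\bullet(\Pi)$, so for each $\pi\in a\circ_\bullet b$ one has $\overline{\pi}\subseteq a\circ_\bullet b$; thus $\overline{\pi}$ occurs among the $c$'s and $\pi\in\overline{\pi}\subseteq a\,\sharp\,b$.

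Conversely, the step you flag as the ``genuine difficulty'' is in fact purely definitional.  The infimum in Definition~\ref{defi:sharp} is taken in the $\qoca$ order $\leq$, which here is $\supseteq$; therefore that infimum \emph{is} $\sup_\perp$, and the condition $a\leq b\to_\perp c$ \emph{is} $b\to_\perp c\subseteq a$.  No appeal to compatibility of $\to_\perp$ with $\sup_\perp$ is needed --- the two index sets coincide literally and the two lattice operations coincide by reversal of order.  Your subsequent manipulations (passing from $\to_\perp$ to $\to$ via the reflection, then to $\circ$ and $\circ_\bullet$ via the adjunctions of Paragraph~\ref{item:hap}(1) and Observation~\ref{obse:table}(\ref{item:apraro})(b)) match the paper's proof of Theorem~\ref{theo:productsequal} verbatim, and your derivation of $a\,\sharp\,b\subseteq\overline{a\circ_\bullet b}$ from the resulting description is exactly the paper's.
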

%\bigskip
%\section{From ${\mathcal {AKS}}$s to $\oca$s: three
%guises}\label{section:guises} \bigskip
\section{From ${\mathcal {AKS}}$s and $\ioca$s to Heyting preorders}
\label{section:akstohpo}
\item In \cite{kn:ocar} we interpreted the main results appearing in
  \cite{kn:streicher}, as a triangle of constructions between the main
  objects under consideration:

\[\xymatrix{\mathcal{AKS} \ar[rr] \ar[rd]&&\mathcal{{}^{\mathcal I}OCA}\ar[ld]\\
 &\bf {HPO}&}\] 
\noindent
 The left diagonal
arrow in the diagram is basically Streicher's construction in
\cite{kn:streicher}, i.e a map between abstract Krivine structures and
preorders, and the triangle can be interpreted as a factorization of this map.  

The horizontal arrow (compare with \cite{kn:ocar}) represents two
possible constructions of an $\oca$, one using the operations
$(\circ,\to)$ and the other $(\circ_\perp,\to_\perp)$. The first are
defined in ${\mathcal P}(\Pi)$ and the others in ${\mathcal
  P}_\perp(\Pi)$.

In this section we intend to perform a similar construction of an
horizontal map, using the operations $(\circ_\bullet,\to_\bullet)$ on
${\mathcal P}_\bullet(\Pi)$, and thereby obtaining a diagram similar to
the one above (with small changes due to the new operations):
\[\xymatrix{\mathcal{AKS} \ar[rr] \ar[rd]&&\mathcal{{}^{\mathcal F}OCA}\ar[ld]\\
 &{\bf {HPO}}&}\]

\item \label{item:constperp}
In this paragraph we start by recalling briefly the construction
  of the arrows of the first
  diagram, i.e. the construction of a $\ioca$ from an $\mathcal {AKS}$
  and of a Heyting preorder from a ${\ioca}$, see
  \cite[Section 5, Definition 5.10, Theorem 5.11; Section 4,
    Definition 4.13, Theorem 4.15]{kn:ocar} and the construction of a
  Heyting preorder from an $\mathcal {AKS}$.  
%% in order to adapt it to
%%   the structure with the new \emph{bullet} operation $\bullet$.
%% In \cite{kn:ocar} the horizontal arrow considered is
%% as follows.
\begin{enumerate}
\item \emph{From $\mathcal{AKS}$ to $\ioca$.} For ${\mathcal K}$ an
  ${\mathcal {AKS}}$, the following elements produce a $\ioca$ denoted
  as ${\mathcal A}_{{\mathcal K}\perp}$: $A_\perp={\mathcal
    P}_\perp(\Pi)$, $a \leq b \in A_\perp$ iff $a \supseteq b$; and $a
  \circ_\perp b,\, a\rightarrow_\perp b\in A_\perp$ are as in
  Paragraph \ref{item:rewritingop}, Definition
  \ref{defi:finaloperations}, $\ck=\{\cK\}^\perp,\,\cs=\{\cS\}^\perp,
  \ce=\{\cE\}^\perp$. Moreover, $\Phi=\{P \in \mathcal P_{\perp}(\Pi):
  \exists t \in \mathrm{QP}\,\, t \perp P\}$.
\item \emph{From $\ioca$ to {\bf HPO}.}
If ${\mathcal A}$ 
  is a $\ioca$ with with filter $\Phi$ and maximum element
  $\top\in\Phi$, define:
 \begin{enumerate}
\item The relation $\sqsubseteq$ in $A$: $a
  \sqsubseteq b, {\textrm{if and only if}}\,\, \exists f \in
  \Phi$ such that $f a \leq b$,
\item A map $\wedge: A \times A \rightarrow A$ as $a \wedge b:= {\cp} ab$.
\end{enumerate}
If ${{\mathcal A}}$ is a ${\ioca}$ then defining $H_{\mathcal A}:=(A,
\sqsubseteq, \wedge, \rightarrow)$, it is clear that it is a Heyting
preorder where $\sqsubseteq, \wedge$ are as above and $\rightarrow$ is
the original implication of ${\mathcal A}$.  For the definition of
$\cp$ see \cite[Section 3, Definition 3.5]{kn:ocar} where it is
defined as the combinator $\cp:=\lambda^*x\lambda^*y\lambda^*z(zxy)$
using the combinatory completeness to garantee that it belongs to
$\Phi$.  An element $f$ as above is said to be ``a realizer of the
relation $a \sqsubseteq b$'' and write this assertion as $f \Vdash a
\sqsubseteq b$.  See \cite[Section 4, Lemma 4.14]{kn:ocar} for the
proof that it is a meet semilattice and \cite[Section 4, Theorem
  4.15]{kn:ocar} for the rest of the proof.
\item \label{item:akshpo} \emph{From $\mathcal{AKS}$ to {\bf HPO}.}  Let ${{\mathcal
    K}}=(\Lambda,\Pi,\Perp, \operatorname{push}, \operatorname{app},
  \cK, \cS, \mathrm{QP})$ be an abstract Krivine structure. We define
  the relation $\sqsubseteq_\perp$ in ${\mathcal P}(\Pi)$ as follows:
  $\label{eq:entailment11} P,Q \in {{\mathcal P}}(\Pi)\,, \quad
  P\sqsubseteq_\perp Q \Leftrightarrow \exists t\in
  {\mathrm{QP}}\,\,\, t{\perp} P{\rightarrow_\perp} Q$.  An element $t
  \in \mathrm{QP}$ as above is said to be ``a realizer of the
  relation'' $P\sqsubseteq_\perp Q$''. It follows that for $P,Q \in
  {{\mathcal P}}(\Pi)$ we have that $t \in \Lambda$ is a realizer of
  the relation $P \sqsubseteq Q$ if and only if it is a realizer of $P
  \sqsubseteq_\perp Q$. Prima facie we have only considered a preorder
  $\sqsubseteq_\perp$ in ${\mathcal P}_{\perp}(\Pi)$ and thus produced an
  object in ${\bf Ord}$, called ${\mathcal H}_{{\mathcal K} \perp}$. In
  \cite[Theorem 5.11]{kn:ocar} it is proved that $\mathcal H_{\mathcal
    K \perp}$ and $\mathcal H_{{\mathcal A}_{\mathcal K \perp}}$ are
  isomorphic, and that implies that the first one is in fact an
  element in ${\bf HPO}$.
\end{enumerate}
\item Next we want to construct a triangle of the second kind
  mentioned above. Regarding the horizontal arrow, for ${\mathcal K}
  \in {\mathcal {AKS}}$, we construct an ordered combinatory algebra
  that we call ${\mathcal A}_{{\mathcal K}\bullet} \in \foca$. It has
  as basic set ${\mathcal{P}}_{\bullet}(\Pi)$, operations
  $\circ_\bullet\,,\,\to_\bullet$ and combinators $\ck_\bullet\,,\,
  \cs_\bullet$ where $\ck_\bullet=\{\cE\cK\}^\perp,\,\cs_\bullet=
  \{\cE((\cB\cE)\cS)\}^\perp$ (see Section \ref{section:opcomb}).
  This ${\foca}$ has a filter that is the natural extension of the
  filter of $\mathcal A_{\mathcal K \perp}$.

Compared with the above construction of ${\mathcal A}_{{\mathcal
    K}\perp}$, if we choose to work with ${\mathcal A}_{{\mathcal
    K}\bullet}$ we will enjoy the following benefits:
  \begin{enumerate}
    \item The construction of the associated tripos is more direct
      than in the case of ${\mathcal A}_{{\mathcal K}\perp}$. This is
      due to the fact that in this context we do not have to recurr to
      an $\eta$-expansor or adjunctor $\ce$ --compare with the
      construction appearing in \cite{kn:streicher} or
      \cite{kn:report}--.
    \item The use ${\mathcal{P}}_{\bullet}(\Pi)$, that is a priori
      ``larger'' than ${\mathcal{P}}_{\perp}(\Pi)$ as the set of
      falsity values, might have some conceptual advantages. Besides
      producing more programs expressible in the semantics, it is
      closer to Krivine's original definition that took
      ${\mathcal{P}}(\Pi)$ as the set of falsity values.
  \end{enumerate}

Then we revisit the construction of the left diagonal arrow in our context, adapting the case of ${\mathcal P}_{\perp}$ to the context of the new closure operator i.e. the case of ${\mathcal P}_{\bullet}$. 

Finally, with respect to the remaining arrow in the triangle, the right
diagonal arrow, there is not much to be done as the construction performed  for a $\foca$ rather than a $\ioca$ is identical.
\item \label{item:noadjunctor} Next we present the precise statements
  and the proofs of the above assertions.
\begin{theo} \label{theo:akstoocabullet} 
Let ${\mathcal{K}}= (\Lambda, \Pi, \bbot, \operatorname{push}, \cK,
\cS,\mathrm{QP}) \in {\mathcal AKS}$, $\ck_\bullet=\{\cE\cK\}^\perp$,
$\cs_\bullet= \{\cE((\cB\cE)\cS)\}^\perp \in
   {\mathcal{P}}_{\bullet}(\Pi)$ and $\Phi=\{P \in {\mathcal
     P}_{\bullet}(\Pi)|\,\exists\, t \in \mathrm{QP}\,,\, t \perp P \}
   \subseteq {\mathcal P}_{\bullet}(\Pi)$. Then ${\mathcal
     A}_{{\mathcal K}\bullet} =({\mathcal P}_{\bullet}(\Pi),
   \circ_\bullet, \rightarrow_\bullet, \supseteq, \ck_\bullet,
   \cs_\bullet,\Phi)$ is a $\foca$.
\end{theo}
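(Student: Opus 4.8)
The plan is to check the clauses of Definition~\ref{defi:foca} in its $\foca$ form for $\mathcal A_{{\mathcal K}\bullet}=({\mathcal P}_\bullet(\Pi),\circ_\bullet,\to_\bullet,\supseteq,\ck_\bullet,\cs_\bullet,\Phi)$ one by one, bearing in mind throughout that the order is the \emph{reverse} of inclusion, so that ``$a\leq b$'' means ``$a\supseteq b$''. The underlying poset is disposed of at once: by Paragraph~\ref{item:completeness}, ${\mathcal P}_\bullet(\Pi)$ is complete for $\subseteq$ with suprema given by plain unions, hence $({\mathcal P}_\bullet(\Pi),\supseteq)$ is $\operatorname{inf}$-complete, its infima being unions. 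For (OP): $\circ_\bullet$ and $\to_\bullet$ always take values in ${\mathcal P}_\bullet(\Pi)$, since $\ast_\bullet$ and $\leadsto_\bullet$ do (Definitions~\ref{defi:newop} and~\ref{defi:finaloperations}); and the required variances --- $\circ_\bullet$ monotone in both arguments, $\to_\bullet$ antitone in the first and monotone in the second --- are precisely the Monotony item of Paragraph~\ref{item:rewritingop}, which is the same statement whether read against ``$\subseteq$'' or against its reverse ``$\supseteq$''. So the real content sits in (CO) and (FI).

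Next I would settle the adjunction clauses (PA) and (PE)$'$ together. Rewriting ``$\leq$'' as ``$\supseteq$'', the inequality $ab\leq c$ becomes $c\subseteq a\circ_\bullet b$ and $a\leq b\to_\bullet c$ becomes $b\to_\bullet c\subseteq a$; hence (PA) and (PE)$'$ together assert exactly the equivalence $b\to_\bullet c\subseteq a\iff c\subseteq a\circ_\bullet b$ for all $a,b,c\in{\mathcal P}_\bullet(\Pi)$. This is precisely the Adjunctions item of Paragraph~\ref{item:rewritingop}, i.e.\ Theorem~\ref{theo:adjunctionbullet} together with the definitions $P\circ_\bullet Q=P\ast_\bullet{}^\perp Q$ and $P\to_\bullet Q={}^\perp P\leadsto_\bullet Q$. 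It is worth emphasising that here one obtains the genuine two-sided adjunction, not merely a lax one: that is exactly what makes the adjunctor superfluous in $\mathcal A_{{\mathcal K}\bullet}$.

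For (PK) and (PS) I would push the axioms through this adjunction to turn them into perpendicularity statements. Applying the equivalence twice, ``$\ck_\bullet\,a\,b\leq a$ for all $a,b$'' is equivalent to $a\to_\bullet(b\to_\bullet a)\subseteq\ck_\bullet=\{\cE\cK\}^\perp$, i.e.\ to $\cE\cK\perp a\to_\bullet(b\to_\bullet a)$; applying it three times, ``$\cs_\bullet\,a\,b\,c\leq a c(b c)$ for all $a,b,c$'' is equivalent to $\cE((\cB\cE)\cS)\perp a\to_\bullet\bigl(b\to_\bullet(c\to_\bullet((a\circ_\bullet c)\circ_\bullet(b\circ_\bullet c)))\bigr)$. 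Each such perpendicularity is then established by unwinding the $\to_\bullet$'s and $\circ_\bullet$'s and invoking the combinator material of Section~\ref{section:opcomb}: the defining reduction rules for $\cK$ and $\cS$ (Definition~\ref{defi:aks}, \eqref{item:reduction}), the push-behaviour of $\cE$ and $\cB$ (Paragraph~\ref{item:morecombinators}, \eqref{item:newcombprop}), and the combinator identities $\cK\perp P\to(Q\to P)$ and $\cS\perp P\to(Q\to(R\to((P\circ R)\circ(Q\circ R))))$ recorded in Paragraph~\ref{item:rewritingop}, \eqref{item:comb2}. The unwinding is kept under control by two standard devices: the identity ${}^\perp P={}^\perp(\widehat P)$ together with the chain $\widetilde P\subseteq P\subseteq\widehat P\subseteq\overline P$ of Theorem~\ref{item:hatperp=perp}, which strip the $\widehat{(\,)}$-closures introduced by $\leadsto_\bullet$; and the fact that $\overline{\{t\}^\perp}=\{t\}^\perp$, so that a stack need only be realized ``up to $\overline{(\,)}$''. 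For $\ck_\bullet$ this reduces matters to realizing stacks $t\cdot s\cdot\pi$ with $t\in{}^\perp a$, $s\in{}^\perp b$, $\pi\in a$, which follows from the $\cK$-rule and a single $\cE$-step; for $\cs_\bullet$ it reduces to the analogous task, discharged by the $\cS$-rule together with one $\cB$-step and one $\cE$-step --- this is exactly what the wrapping $\cE((\cB\cE)\cS)$ encodes, just as $\cE\cE$ upgrades the $\cE$-fact in Lemma~\ref{lema:peprima}. I expect the $\cs_\bullet$ clause to be the main obstacle: unlike the $\ck_\bullet$ clause, its ``payload'' $(a\circ_\bullet c)\circ_\bullet(b\circ_\bullet c)$ is not $\overline{(\,)}$-closed, so the passage from the $(\ast,\leadsto)$-level facts available for $\cS$ to the $(\ast_\bullet,\leadsto_\bullet)$-level identities actually needed must be mediated, position by position, by $\cE$ and $\cB$.

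Finally, (FI). The set $\Phi=\{P\in{\mathcal P}_\bullet(\Pi):\exists\,t\in\mathrm{QP},\ t\perp P\}$ is closed under $\circ_\bullet$: if $t\perp P$ and $s\perp Q$ with $t,s\in\mathrm{QP}$, then $s\in{}^\perp Q$ and Lemma~\ref{lema:s1}, \eqref{item:s14}, gives $ts\perp P\ast_\bullet{}^\perp Q=P\circ_\bullet Q$, while $ts\in\mathrm{QP}$ since $\mathrm{QP}$ is closed under application. And $\ck_\bullet,\cs_\bullet\in\Phi$: the defining terms $\cE\cK$ and $\cE((\cB\cE)\cS)$ lie in $\mathrm{QP}$ --- they are built from $\cK,\cS$ and the derived elements $\cI,\cE,\cB$, all in $\mathrm{QP}$ --- and for any $t$ one has $t\perp\{t\}^\perp$ trivially, with $\{t\}^\perp\in{\mathcal P}_\perp(\Pi)\subseteq{\mathcal P}_\bullet(\Pi)$. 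Assembling the poset structure, (OP), the combinator clauses (PK), (PS), (PA), the full-adjunction clause (PE)$'$, and (FI) then yields that $\mathcal A_{{\mathcal K}\bullet}$ is a $\foca$.
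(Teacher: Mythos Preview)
Your proposal is correct and follows essentially the same route as the paper: verify the monotony of $\circ_\bullet,\to_\bullet$, use the full adjunction of Theorem~\ref{theo:adjunctionbullet} to reduce \rpaxk\ and \rpaxs\ to the perpendicularity statements $\cE\cK\perp P\to_\bullet(Q\to_\bullet P)$ and $\cE((\cB\cE)\cS)\perp P\to_\bullet(Q\to_\bullet(R\to_\bullet(P\circ_\bullet R)\circ_\bullet(Q\circ_\bullet R)))$, establish these via the reduction rules for $\cK,\cS,\cE,\cB$ together with ${}^\perp X={}^\perp(\widehat X)$, and handle $\Phi$ via Lemma~\ref{lema:s1}\eqref{item:s14}. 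One small slip: the $\cs_\bullet$ case needs \emph{two} $\cE$-steps (one at each $\to_\bullet$ layer above the innermost) and one $\cB$-step, matching the two occurrences of $\cE$ in $\cE((\cB\cE)\cS)$; and the passage to the $\circ_\bullet$-payload is effected not by $\cE,\cB$ but by the inclusion $P\circ_\bullet Q\subseteq P\diamond Q$ (equivalently Lemma~\ref{lema:s1}\eqref{item:s14}), which gives $pr\perp P\circ_\bullet R$, $qr\perp Q\circ_\bullet R$, and then $(pr)(qr)\perp(P\circ_\bullet R)\circ_\bullet(Q\circ_\bullet R)$ directly.
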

\begin{proof} Clearly $\ck_\bullet, \cs_\bullet \in{\mathcal{P}}_\perp(\Pi)\subseteq
  {\mathcal{P}}_\bullet(\Pi)$ because both are perpendiculars of
  subsets of $\Lambda$. The operation $\circ_\bullet$ is monotonic in
  both arguments and $\to_\bullet$ is antimonotonic on the first
  argument and monotonic in the second --see Paragraph
  \ref{item:rewritingop}--.  To prove that $\ck_\bullet$ and
  $\cs_\bullet$ satisfy the requiered properties we proceed as
  follows.
\begin{enumerate}
\item The inequality $\ck_\bullet \circ_\bullet P\circ_\bullet
  Q\supseteq P$ holds for generic subsets $P,Q \in {\mathcal
    P}_\bullet(\Pi)$.  Indeed, if we take $p \in {}^\perp P,\, \pi \in
  P$ implies: see Definition
  \ref{defi:aks}.\eqref{item:reduction}.(b), that $\cK \perp p\cdot q
  \cdot \pi$ for all $q \in \Lambda$ in particular for all $q \perp Q$
  and consequently we have that in the same sets: $\cK p \perp q \cdot
  \pi$.  In other words we have that $\cK p \in {}^\perp ({}^\perp Q
  \cdot P)= {}^\perp(Q \rightarrow P)={}^\perp \bigl((Q \rightarrow
  P)^{\widehat{}}\,\,\bigr)={}^\perp(Q \rightarrow_\bullet P)$, the
  equalities being guaranteed by the considerations in Paragraph
  \ref{item:polarclosure},\eqref{item:hatperp=perp},(2) and by Definition
  \ref{defi:newop}. Hence, it follows from Paragraph
  \ref{item:morecombinators},\eqref{item:newcombprop} that for all $p
  \perp P,\,\, \rho \in Q \rightarrow_\bullet P$, $\cE \perp \cK\cdot
  p\cdot \rho$ and also that $\cE\cK \perp p\cdot \rho$. In the same
  manner than before, this implies that $\cE\cK \in {}^\perp(P
  \rightarrow_\bullet (Q \rightarrow_\bullet P))$.  This inequality is
  equivalent with $\ck_\bullet \supseteq P \rightarrow_\bullet (Q
  \rightarrow_\bullet P)$ and the adjunction property for
  $(\rightarrow_\bullet, \circ_\bullet)$ implies that $\ck_\bullet
  \circ_\bullet P\circ_\bullet Q\supseteq P$.
\item Now we have to prove that $\cs_\bullet \circ_\bullet
  P\circ_\bullet Q\circ_\bullet R\supseteq (P\circ_\bullet
  R)\circ_\bullet (Q\circ_\bullet R)$ for generic $P,Q,R \in {\mathcal
    P}_\bullet(\Pi)$.

It follows from the adjunction property that the above is equivalent
with $\cs_\bullet \supseteq P\to_\bullet (Q\to_\bullet (R\to_\bullet
(P\circ_\bullet R)\circ_\bullet (Q\circ_\bullet R)))$.  Take $p \in
{}^\perp P, q \in {}^\perp Q, r \in {}^\perp R$. We use the inequality
: $X \circ_\bullet Y \subseteq X \diamond Y$ to guarantee in succesive
steps that $pr \perp P \circ_\bullet R, qr \perp Q \circ_\bullet R$
and also that $(pr)(qr) \perp (P \circ_\bullet R)\circ_\bullet (Q
\circ_\bullet R)$.  It follows from Definition \ref{defi:aks},
\eqref{item:reduction} that $\cS \perp p\cdot q \cdot r \cdot \pi$ for
all $p,q,r$ as above and for all $\pi \in (P\circ_\bullet R)
\circ_\bullet (Q\circ_\bullet R)$. It follows then that $\cS pq \perp
r\cdot \pi$ and then in the same manner than before that $\cS p q
\perp R\to_\bullet (P\circ_\bullet R)\circ_\bullet (Q\circ_\bullet
R)$. Let $\rho \in R\to_\bullet (P\circ_\bullet R)\circ_\bullet
(Q\circ_\bullet R)$ using the properties for the combinators $\cE,
\cB$ appearing in Paragraph
\ref{item:morecombinators},\eqref{item:newcombprop} we deduce first
that $\cE \perp (\cS p)\cdot q \cdot \rho$ for all $p,q,\rho$ as
above, and then that $\cB \perp \cE\cdot \cS \cdot p \cdot q \cdot
\rho$.  Then, $(\cB \cE)\cS p \perp q\cdot \rho$ and as before write
down this property as: $(\cB \cE)\cS p \perp Q\to_\bullet
(R\to_\bullet (P\circ_\bullet R)\circ_\bullet (Q\circ_\bullet R))$. If
call $\gamma$ a generic element in $Q\to_\bullet (R\to_\bullet
(P\circ_\bullet R)\circ_\bullet (Q\circ_\bullet R))$, from $(\cB
\cE)\cS p \perp \gamma$ we deduce first that $\cE \perp ((\cB
\cE)\cS)\cdot p \cdot \gamma$ and then that $\cE ((\cB \cE)\cS)\perp p
\cdot \gamma$.  As before, a perpendicularity relation like the above
means that $\cE ((\cB \cE)\cS) \perp P\to_\bullet (Q\to_\bullet
(R\to_\bullet (P\circ_\bullet R)\circ_\bullet (Q\circ_\bullet R)))$
and then that $\{\cE ((\cB \cE)\cS)\} \subseteq {}^\perp(P\to_\bullet
(Q\to_\bullet (R\to_\bullet (P\circ_\bullet R)\circ_\bullet
(Q\circ_\bullet R))))$.  Then: $\{\cE ((\cB \cE)\cS)\}^\perp \supseteq
({}^\perp(P\to_\bullet (Q\to_\bullet (R\to_\bullet (P\circ_\bullet
R)\circ_\bullet (Q\circ_\bullet R))))^\perp \supseteq P\to_\bullet
(Q\to_\bullet (R\to_\bullet (P\circ_\bullet R)\circ_\bullet
(Q\circ_\bullet R)))$.
\item To prove that $\Phi \subseteq A$ is a filter that contains
  $\ck_{\bullet},\cs_{\bullet}$ we proceed as follows. The subset
  $\Phi$ is closed under application. Indeed, if $f,g \in \Phi$, we
  have that $t_f\in {}^\perp f \cap \mathrm{QP}$ and $t_g\in {}^\perp
  g \cap \mathrm{QP}$, then $t_ft_g\in {}^\perp f {}^\perp g \cap
  {\mathrm{QP}}$.  It follows directly from Lemma \ref{lema:s1},
  \eqref{item:s14} that ${}^\perp f {}^\perp g \subseteq {}^\perp(f
  \circ_{\bullet} g)$, and hence $\Phi$ is closed under application.
  Moreover, $\ck_{\bullet}$ is the set of elements perpendicular to a
  fixed element of $\mathrm{QP}$, and the same for $\cs_{\bullet}$. It
  is clear --by the definition of $\Phi$-- that set of this kind are
  elements of $\Phi$.
\end{enumerate}
\end{proof}
\item\label{item:streicherbullet} In this paragraph, given $\mathcal K$ an ${\mathcal{AKS}}$ and
  taking into account the realizability relation, we define a preorder
  $\mathcal H_{{\mathcal K}\bullet}$ based on the set ${\mathcal
    P}_\bullet(\Pi)$, that is similar to the one defined by Streicher
  in \cite{kn:streicher} and summarized in Paragraph
  \ref{item:constperp},\eqref{item:akshpo} and used in \cite{kn:ocar}
  to define realizability in terms of $\ioca$s.

\begin{defi}\label{defi:squareorder} 
Let ${{\mathcal K}}=(\Lambda,\Pi,\Perp, \operatorname{push}, \operatorname{app}, \cK,
  \cS, \mathrm{QP})$ be an abstract Krivine
structure. We define the relation $\sqsubseteq_\bullet$ in ${\mathcal
  P}(\Pi)$ as follows:
\begin{equation*}\label{eq:entailment11} P,Q \in {{\mathcal P}}(\Pi)\,, \quad P\sqsubseteq_\bullet Q \Leftrightarrow \exists t\in
  {\mathrm{QP}}\,\,\, t{\perp} P{\rightarrow_\bullet} Q=({}^\perp P).Q,
\end{equation*} for $P, Q \in {{\mathcal P}}_\bullet(\Pi)$. An element $t
\in \mathrm{QP}$ as above is said to be ``a realizer of the
relation'' $P\sqsubseteq_\bullet Q$''.
\end{defi}
\begin{obse}\label{remark:equal}  From Definition \ref{defi:erre} and 
Theorem \ref{item:hatperp=perp} it follows that for $P,Q \in
{{\mathcal P}}(\Pi)$ we have that ${}^\perp (P \rightarrow Q)={}^\perp
(P \rightarrow_\bullet Q)={}^\perp (P \rightarrow_\perp Q)$. Hence, $t
\in \Lambda$ is a realizer of the relation $P \sqsubseteq Q$ if and
only if it is a realizer of $P \sqsubseteq_\bullet Q$ if and only if
it is a realizer of the relation $P \sqsubseteq_\perp Q$.
\end{obse}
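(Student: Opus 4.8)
The plan is to prove first the displayed set-theoretic identity ${}^\perp(P\rightarrow Q)={}^\perp(P\rightarrow_\bullet Q)={}^\perp(P\rightarrow_\perp Q)$, and then to deduce the statement about realizers, since — by the way the three relations are defined — a realizer of $P\sqsubseteq Q$, of $P\sqsubseteq_\bullet Q$, and of $P\sqsubseteq_\perp Q$ is, respectively, an element of $\mathrm{QP}$ lying in ${}^\perp(P\rightarrow Q)$, in ${}^\perp(P\rightarrow_\bullet Q)$, and in ${}^\perp(P\rightarrow_\perp Q)$.

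For the identity I would unfold Definition~\ref{defi:newop}: writing $X:={}^\perp P\leadsto Q$ we have $P\rightarrow Q=X$, $P\rightarrow_\bullet Q=\widehat X$, and $P\rightarrow_\perp Q=\overline X$. Now ${}^\perp(-)$ cannot distinguish $X$ from either of its closures: Theorem~\ref{item:hatperp=perp}, part~(\ref{iitem:hatperp=perp2}), gives ${}^\perp X={}^\perp\widehat X$, and since $\overline X=({}^\perp X)^\perp$ by definition, the last identity of Observation~\ref{obse:props-perps} gives ${}^\perp\overline X={}^\perp(({}^\perp X)^\perp)={}^\perp X$. Hence ${}^\perp X={}^\perp\widehat X={}^\perp\overline X$, which is precisely ${}^\perp(P\rightarrow Q)={}^\perp(P\rightarrow_\bullet Q)={}^\perp(P\rightarrow_\perp Q)$. (Equivalently, since $X\subseteq\widehat X\subseteq\overline X$ by Theorem~\ref{item:hatperp=perp}, part~(\ref{iitem:hatperp=perp1}), the order-reversing map ${}^\perp(-)$ squeezes ${}^\perp\widehat X$ between the sets ${}^\perp\overline X$ and ${}^\perp X$, already shown equal.)

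For the second assertion, recall from Definition~\ref{defi:squareorder}, from Paragraph~\ref{item:akshpo}, and the conventions there that $t\in\Lambda$ realizes $P\sqsubseteq Q$, resp.\ $P\sqsubseteq_\bullet Q$, resp.\ $P\sqsubseteq_\perp Q$, exactly when $t\in\mathrm{QP}$ and $t\perp P\rightarrow Q$, resp.\ $t\perp P\rightarrow_\bullet Q$, resp.\ $t\perp P\rightarrow_\perp Q$; that is, when $t\in\mathrm{QP}\cap{}^\perp(P\rightarrow Q)$, resp.\ $\mathrm{QP}\cap{}^\perp(P\rightarrow_\bullet Q)$, resp.\ $\mathrm{QP}\cap{}^\perp(P\rightarrow_\perp Q)$. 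By the first part these three subsets of $\Lambda$ coincide, so the three notions of realizer coincide term by term, and in particular $\sqsubseteq$, $\sqsubseteq_\bullet$ and $\sqsubseteq_\perp$ agree on ${\mathcal P}_\bullet(\Pi)$.

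There is essentially no obstacle here: the argument is a one-line consequence of the fact that the perpendicular of a set equals the perpendicular of any of its closures. The only point demanding care is the bookkeeping — keeping straight the definitions of the three realizability relations — together with the mild counterintuition that, although $P\rightarrow Q$, $P\rightarrow_\bullet Q$ and $P\rightarrow_\perp Q$ form a genuinely increasing (in general strictly increasing, cf.\ Paragraph~\ref{item:nonclosed}) chain of subsets of $\Pi$, they become indistinguishable under ${}^\perp(-)$.
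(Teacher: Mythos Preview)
Your proposal is correct and follows exactly the route the paper intends: the Observation is stated without a detailed proof, merely citing Definition~\ref{defi:erre} and Theorem~\ref{item:hatperp=perp}, and you have unpacked precisely those references --- writing $X={}^\perp P\leadsto Q$ so that the three arrows are $X$, $\widehat X$, $\overline X$, and then invoking ${}^\perp X={}^\perp\widehat X$ (Theorem~\ref{item:hatperp=perp}\eqref{iitem:hatperp=perp2}) and ${}^\perp\overline X={}^\perp X$ (Observation~\ref{obse:props-perps}). The deduction about realizers is then immediate from the definitions, as you say.
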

\begin{lema}\label{lem:aks-ord} Let ${\mathcal K}$ be an abstract
  Krivine structure, then the relation $\sqsubseteq$ is a preorder on
  ${{\mathcal P}}(\Pi)$ and the relation $\sqsubseteq_\bullet$ is a
  preorder on ${\mathcal P}_\bullet(\Pi)$
\end{lema}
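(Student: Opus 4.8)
The plan is to establish reflexivity and transitivity of $\sqsubseteq$ on ${\mathcal P}(\Pi)$ by exhibiting explicit quasi-proof realizers built from the combinators $\cI$ and $\cB$ of Paragraph \ref{item:morecombinators}, \eqref{item:newcomb}, and then to read off the corresponding statements for $\sqsubseteq_\bullet$ on ${\mathcal P}_\bullet(\Pi)$ from Observation \ref{remark:equal}. Throughout I unwind Definition \ref{defi:newop}: a term $t$ realizes $P \sqsubseteq Q$ precisely when $t \in {\mathrm{QP}}$ and $t \perp ({}^\perp P \leadsto Q)$, i.e.\ $t \perp p\cdot\pi$ for all $p \in {}^\perp P$ and $\pi \in Q$; and I record the consequence (via the basic reduction rule, Lemma \ref{lema:s1}(1)) that then $tp \in {}^\perp Q$ for every $p \in {}^\perp P$.

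For reflexivity I would take the realizer $\cI = \cS\cK\cK$, which lies in ${\mathrm{QP}}$ because $\cK,\cS \in {\mathrm{QP}}$ and ${\mathrm{QP}}$ is closed under application. Any element of ${}^\perp P \leadsto P$ has the shape $p\cdot\pi$ with $p \in {}^\perp P$, $\pi \in P$, whence $p \perp \pi$; the defining property of $\cI$ (Paragraph \ref{item:morecombinators}, \eqref{item:newcombprop}(a)) gives $\cI \perp p\cdot\pi$. Hence $\cI \perp ({}^\perp P \leadsto P)$, i.e.\ $\cI$ realizes $P \sqsubseteq P$.

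For transitivity, assume $t$ realizes $P \sqsubseteq Q$ and $s$ realizes $Q \sqsubseteq R$; the claim is that $\cB s t$ realizes $P \sqsubseteq R$, where $\cB = \cS(\cK\cS)\cK \in {\mathrm{QP}}$, so $\cB s t \in {\mathrm{QP}}$ by closure under application. Fix $p \in {}^\perp P$ and $\rho \in R$. From the remark above $tp \in {}^\perp Q$, so, as $s$ realizes $Q \sqsubseteq R$, we get $s \perp (tp)\cdot\rho$. Feeding this into the defining property of $\cB$ (Paragraph \ref{item:morecombinators}, \eqref{item:newcombprop}(c), applied with $s$ in the role of its ``$t$'', $t$ in the role of its ``$s$'', $p$ in the role of ``$\ell$'', and $\rho$ in the role of ``$\pi$'') yields $\cB \perp s\cdot t\cdot p\cdot\rho$, and two applications of the reduction rule produce $\cB s \perp t\cdot p\cdot\rho$ and then $\cB s t \perp p\cdot\rho$. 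As $p$ and $\rho$ were arbitrary, $\cB s t \perp ({}^\perp P \leadsto R)$, i.e.\ $\cB s t$ realizes $P \sqsubseteq R$; so $\sqsubseteq$ is a preorder on ${\mathcal P}(\Pi)$.

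Finally, for $\sqsubseteq_\bullet$ on ${\mathcal P}_\bullet(\Pi)$: by Observation \ref{remark:equal} a term realizes $P \sqsubseteq_\bullet Q$ exactly when it realizes $P \sqsubseteq Q$, so both reflexivity (witnessed by $\cI$) and transitivity (witnessed by $\cB s t$) transfer immediately, giving that $\sqsubseteq_\bullet$ is a preorder on ${\mathcal P}_\bullet(\Pi)$. I do not expect a genuine obstacle: the only delicate points are bookkeeping — matching the role substitution into \eqref{item:newcombprop}(c) and checking membership in ${\mathrm{QP}}$ at each intermediate step — and the one thing to be careful about is \emph{not} invoking any converse of the reduction rule, since only the implication ``$t \perp s\cdot\pi \Rightarrow ts \perp \pi$'' is assumed, and that is all the argument requires.
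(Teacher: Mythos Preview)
Your argument is correct: $\cI$ witnesses reflexivity, $\cB s t$ witnesses transitivity, and Observation \ref{remark:equal} transfers both to $\sqsubseteq_\bullet$; the bookkeeping with the reduction rule and the role substitution into \eqref{item:newcombprop}(c) is accurate. The paper itself does not spell this out but simply cites \cite[Section 4, Lemma 4.12]{kn:ocar} for the first assertion and invokes Observation \ref{remark:equal} for the second, so your explicit combinator computation is more informative than the paper's proof while following the same underlying idea.
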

\begin{proof} The first assertion appears proved in \cite[Section 4, Lemma 4.12]{kn:ocar} and the second follows from the above Observation \ref{remark:equal}.
\end{proof}

\begin{lema}\label{lem:eqpo} The canonical inclusion
$({\mathcal P}_{\bullet}(\Pi),\sqsubseteq_\bullet) \hookrightarrow
  ({\mathcal P}(\Pi),\sqsubseteq)$ is an equivalence of preorders.
\end{lema}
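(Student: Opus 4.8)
The plan is to verify the hypotheses of the standard criterion recalled in Observation~\ref{obse:essentiallysur}(2): a monotonic map of preorders is an equivalence if and only if it is order reflecting and essentially surjective. So I would check three things for the inclusion $\iota$: that it is monotonic, that it is order reflecting, and that it is essentially surjective.

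First I would handle monotonicity and order reflection simultaneously, both being immediate consequences of Observation~\ref{remark:equal}. That observation gives ${}^\perp(P\rightarrow Q)={}^\perp(P\rightarrow_\bullet Q)$ for $P,Q\in{\mathcal P}(\Pi)$, hence an element $t\in\mathrm{QP}$ realizes $P\sqsubseteq Q$ exactly when it realizes $P\sqsubseteq_\bullet Q$. Restricting to $P,Q\in{\mathcal P}_\bullet(\Pi)$ we get $P\sqsubseteq_\bullet Q\Leftrightarrow P\sqsubseteq Q$, which says precisely that $\iota$ is both order preserving and order reflecting.

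The one step with content is essential surjectivity. Given $P\in{\mathcal P}(\Pi)$ I would take $\widehat P$, which lies in ${\mathcal P}_\bullet(\Pi)$ by idempotency of the closure operator $(\ )^\wedge$, and show $\widehat P\cong P$ in $({\mathcal P}(\Pi),\sqsubseteq)$; in both directions the realizer is $\cI=\cS\cK\cK\in\mathrm{QP}$. For $P\sqsubseteq\widehat P$ one needs $\cI\perp{}^\perp P\leadsto\widehat P$, i.e.\ $\cI\perp p\cdot\pi$ for $p\in{}^\perp P$, $\pi\in\widehat P$; by the combinator property of $\cI$ (Paragraph~\ref{item:morecombinators}.\eqref{item:newcombprop}(a)) it suffices that $p\perp\pi$, i.e.\ $\widehat P\subseteq({}^\perp P)^\perp=\overline P$, which holds by Theorem~\ref{item:hatperp=perp}.\eqref{iitem:hatperp=perp1}. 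Symmetrically, $\widehat P\sqsubseteq P$ reduces via the same combinator to $P\subseteq({}^\perp\widehat P)^\perp=\overline{\widehat P}$, and $\overline{\widehat P}=\overline P\supseteq P$ by Theorem~\ref{item:hatperp=perp}.\eqref{iitem:hatperp=perp2}. Thus $\widehat P\in{\mathcal P}_\bullet(\Pi)$ with $\iota(\widehat P)\cong P$, proving essential surjectivity, and the three checks together give that $\iota$ is an equivalence.

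I do not expect a genuine obstacle. The only subtlety is bookkeeping: the relation $\sqsubseteq_\bullet$ is defined only on ${\mathcal P}_\bullet(\Pi)$, so the biconditional from Observation~\ref{remark:equal} is applied exactly in the range where it is available (both predicates closed), and essential surjectivity is precisely where one must leave ${\mathcal P}_\bullet(\Pi)$ --- choosing the $(\ )^\wedge$-closure $\widehat P$ rather than $\overline P$ keeps the witness inside the relevant subcategory while still being isomorphic to $P$.
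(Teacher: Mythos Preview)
Your argument is correct and follows essentially the same approach as the paper: invoke the order-reflecting/essentially-surjective criterion, get order reflection for free from Observation~\ref{remark:equal}, and exhibit an explicit witness realized by $\cI$. The only difference is cosmetic: the paper uses $\overline{P}=({}^\perp P)^\perp$ as the witness rather than your $\widehat{P}$, and your closing remark that $\overline{P}$ would fail to stay ``inside the relevant subcategory'' is not accurate, since ${\mathcal P}_\perp(\Pi)\subseteq{\mathcal P}_\bullet(\Pi)$ by Theorem~\ref{item:hatperp=perp}.\eqref{iitem:hatperp=perp3}; either closure works equally well.
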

\begin{proof} By the comments at the beginning of 
Paragraph ~\ref{item:recall}, it suffices to show that the inclusion
is order reflecting and essentially surjective. Since the order on
${\mathcal P}_{\bullet}(\Pi)$ is defined as restriction of the order
on ${\mathcal P}(\Pi)$, the first assertion is clear.

As $({}^\perp P)^\perp \in {\mathcal P}_\bullet(\Pi)$ for all $P \in
{\mathcal P}_\bullet(\Pi)$, once we prove that
$P\sqsubseteq_{\bullet}({}^\perp P)^\perp$ and $({}^\perp
P)^\perp\sqsubseteq_{\bullet} P$, the fact that the inclusion is
essentially surjective follows directly.

To prove the above inclusions, first recall from \cite[Section 2,
  Lemma 2.12, (4)]{kn:ocar} that for all $Q \subseteq \Pi$ $\cI \perp
{}^\perp Q \cdot Q$, and then apply this perpendicularity relation to the situation where $Q:= ({}^\perp P)^\perp$ and $Q:=P$ respectively.
\end{proof}
\begin{defi} For $\mathcal K$ an $\mathcal{AKS}$ we define the preorder ${\mathcal H}_{\mathcal K \bullet}:=({\mathcal P}_{\bullet}(\Pi),
\sqsubseteq_\bullet) \in {\bf Ord}$.
\end{defi}
\begin{obse} We may state the above result, in terms of Streicher's construction of the map  $\mathcal {AKS} \rightarrow \ioca \rightarrow {\bf Ord}$, as guaranteeing that the three constructions: (1) $\mathcal K \rightarrow \mathcal A_{\mathcal K}$, (2) $\mathcal K \rightarrow \mathcal A_{\mathcal K\bullet}$, (3) $\mathcal K \rightarrow \mathcal A_{\mathcal K\perp}$;  composed with the construction from $\ioca \rightarrow {\bf HPO}$ produce equivalent preorders. 
\end{obse}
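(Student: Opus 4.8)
The plan is, for each of the three algebras, to describe the underlying preorder of the Heyting preorder obtained by post-composing with the construction $\ioca\to{\bf HPO}$ of Paragraph \ref{item:constperp}, (2), and then to verify that the three preorders so obtained are pairwise equivalent. Recall that this construction equips the carrier $A$ with the \emph{filter entailment} ``$a\sqsubseteq' b$ iff $fa\leq b$ for some $f$ in the filter $\Phi$''. For each of $\mathcal A_{\mathcal K}$, $\mathcal A_{\mathcal K\bullet}$, $\mathcal A_{\mathcal K\perp}$ the required maximum element lies in $\Phi$: it is $\emptyset$ for ${\mathcal P}(\Pi)$ and ${\mathcal P}_\bullet(\Pi)$, realised vacuously, and $\Lambda^\perp=\overline{\emptyset}$ for ${\mathcal P}_\perp(\Pi)$, realised by any element of $\mathrm{QP}$; so the construction applies, and $\mathcal A_{\mathcal K}$ and $\mathcal A_{\mathcal K\bullet}$ are in fact $\foca$s (by Paragraph \ref{item:hap}, (1), and Theorem \ref{theo:akstoocabullet}).

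The first and main step is to show that, in each of the three cases, the filter entailment $\sqsubseteq'$ coincides with the realizability entailment $\sqsubseteq$, $\sqsubseteq_\bullet$, $\sqsubseteq_\perp$ of Paragraphs \ref{item:constperp} and \ref{item:streicherbullet}. For $\mathcal A_{\mathcal K\perp}$ this is \cite[Theorem 5.11]{kn:ocar}; for $\mathcal A_{\mathcal K\bullet}$ and $\mathcal A_{\mathcal K}$ I would run the same argument. In one direction, if $s\in\mathrm{QP}$ realises $P\sqsubseteq_\bullet Q$ then $f:=P\to_\bullet Q$ lies in $\Phi$, and the modus ponens inequality $(P\to_\bullet Q)\circ_\bullet P\leq Q$ --- which holds because $\mathcal A_{\mathcal K\bullet}$ satisfies (PA) --- gives $f\circ_\bullet P\leq Q$, i.e.\ $P\sqsubseteq' Q$. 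In the other direction, if $f\in\Phi$ is realised by $t\in\mathrm{QP}$ and $f\circ_\bullet P=f\ast_\bullet{}^\perp P\supseteq Q$, then for $p\in{}^\perp P$ and $q\in Q$ one unwinds the definition of $\ast_\bullet$ to get $p\cdot q\in{}^\perp P\cdot\overline{q}\subseteq f$, hence $t\perp p\cdot q$; thus $t\perp{}^\perp P\cdot Q=P\to Q$, and by Observation \ref{remark:equal} the very same $t$ realises $P\sqsubseteq_\bullet Q$. The verbatim argument (with $\circ$, $\ast$, $\to$) handles $\mathcal A_{\mathcal K}$.

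It then remains to compare the three realizability entailment preorders. Lemma \ref{lem:eqpo} already gives $({\mathcal P}_\bullet(\Pi),\sqsubseteq_\bullet)\simeq({\mathcal P}(\Pi),\sqsubseteq)$. For the inclusion $({\mathcal P}_\perp(\Pi),\sqsubseteq_\perp)\hookrightarrow({\mathcal P}_\bullet(\Pi),\sqsubseteq_\bullet)$: it is order reflecting because, by Observation \ref{remark:equal}, $\sqsubseteq_\perp$ is the restriction of $\sqsubseteq_\bullet$; and it is essentially surjective because any $P\in{\mathcal P}_\bullet(\Pi)$ is $\sqsubseteq_\bullet$-isomorphic to $\overline P=({}^\perp P)^\perp\in{\mathcal P}_\perp(\Pi)$ --- indeed ${}^\perp\overline P={}^\perp P$, so the relation $\cI\perp{}^\perp Q\cdot Q$ of \cite[Section 2, Lemma 2.12, (4)]{kn:ocar}, applied with $Q:=\overline P$ and with $Q:=P$ and using $\cI\in\mathrm{QP}$, witnesses both $P\sqsubseteq_\bullet\overline P$ and $\overline P\sqsubseteq_\bullet P$. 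Hence that inclusion is an equivalence too, so all three preorders $({\mathcal P}(\Pi),\sqsubseteq)$, $({\mathcal P}_\bullet(\Pi),\sqsubseteq_\bullet)$, $({\mathcal P}_\perp(\Pi),\sqsubseteq_\perp)$ are pairwise equivalent, and combining with the first step finishes the proof. The only genuinely new verification is the first step for $\mathcal A_{\mathcal K\bullet}$, the $\bullet$-analogue of \cite[Theorem 5.11]{kn:ocar}: one has to keep track of $\circ_\bullet$, $\ast_\bullet$, $\leadsto_\bullet$ and of the operator $P\mapsto\widehat P$ in parallel with the basic reduction rule, but this mirrors the bookkeeping already done for $\mathcal A_{\mathcal K\perp}$ and presents no real difficulty. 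If one wants the equivalences in ${\bf HPO}$ rather than merely in ${\bf Ord}$, it remains to check that the inclusions and their weak inverses preserve $\wedge$ and $\to$ up to isomorphism, which is routine; the statement, however, only asks for equivalent preorders.
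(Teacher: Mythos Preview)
Your proposal is correct and follows essentially the same route as the paper. The paper treats this statement as a remark without a formal proof: it is meant to be read as a reformulation of Lemma~\ref{lem:eqpo} together with Observation~\ref{remark:equal} and the analogous $\perp$-result from \cite{kn:ocar}, while the identification of the filter entailment on $\mathcal A_{\mathcal K\bullet}$ with the realizability preorder $\sqsubseteq_\bullet$ is precisely the content of the subsequent Theorem~\ref{theo:pakstooca}. Your first step reproves that theorem (and its plain-$\circ$ analogue) in slightly different packaging---you unwind $\ast_\bullet$ directly instead of invoking the $(\circ_\bullet,\to_\bullet)$ adjunction---and your second step is Lemma~\ref{lem:eqpo} plus the same essential-surjectivity argument for the inclusion $\mathcal P_\perp(\Pi)\hookrightarrow\mathcal P_\bullet(\Pi)$; so the ingredients and their assembly coincide with the paper's.
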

\item In this paragraph, we show that if ${\mathcal K}$ is in
  ${\mathcal {AKS}}$ the preorders ${\mathcal H}_{\mathcal K \bullet}$
  and ${\mathcal H}_{\mathcal A_{\mathcal K\bullet}}$ are
  isomorphic (see Paragraphs \ref{item:streicherbullet},\ref{item:noadjunctor} and the definition below). 

%% after recalling the construction $ \mapsto {\mathcal
%%     A}_{{\mathcal K}\bullet}$ appearing in \ref{item:noadjunctor} that
%%   produces a $\ioca$ from an we show that the indexed preorder
%%   associated to the ${\mathcal {AKS}}$ and the tripos associated to
%%   the $\ioca$ are isomorphic.

\begin{defi}If ${\mathcal A}$ is a $\foca$ with with filter $\Phi$ and
  maximum element $\top\in\Phi$, define: the relation $\sqsubseteq$ in
  $A$: $a \sqsubseteq b, {\textrm{if and only if}}\,\, \exists f \in
  \Phi$ such that $f a \leq b$ and a map $\wedge: A \times A
  \rightarrow A$ as $a \wedge b:= {\cp} ab$. We call ${\mathcal
    H}_{\mathcal A}=(A, \sqsubseteq, \wedge,\rightarrow)$ the
  Heyting preorder associated to ${\mathcal A}$.
\end{defi}
It is clear (compare with the definitions in Paragraph
\ref{item:constperp}), that if ${{\mathcal A}}$ is a ${\foca}$ one has
that ${\mathcal H}_{\mathcal A}$ is a Heyting preorder.

\begin{theo}\label{theo:pakstooca} Let ${\mathcal K}$ be an $\mathcal{AKS}$, then  ${\mathcal
    H}_{\mathcal K\bullet}$ and ${\mathcal H}_{\mathcal A_{\mathcal K\bullet}}$,
  are isomorphic preorders. In particular, we conclude that
  ${\mathcal H}_{\mathcal K\bullet}$ is a Heyting preorder.
\end{theo}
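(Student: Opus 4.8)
The plan is to show that the relation $\sqsubseteq_\bullet$ of $\mathcal{H}_{\mathcal{K}\bullet}$ and the entailment relation $\sqsubseteq$ of $\mathcal{H}_{\mathcal{A}_{\mathcal{K}\bullet}}$ are literally the \emph{same} relation on the common underlying set $\mathcal{P}_\bullet(\Pi)$; the identity map is then the required isomorphism of preorders, and the final clause follows because $\mathcal{A}_{\mathcal{K}\bullet}$ is a $\foca$ by Theorem \ref{theo:akstoocabullet}, so $\mathcal{H}_{\mathcal{A}_{\mathcal{K}\bullet}}$ is a Heyting preorder, a structure carried along the isomorphism. Throughout I would use two elementary remarks: for $t\in\Lambda$ and $S\subseteq\Pi$ one has $t\perp S$ if and only if $S\subseteq\{t\}^\perp$; and if $t\in\mathrm{QP}$ then $\{t\}^\perp\in\Phi$, since $t\perp\{t\}^\perp$ holds trivially and $\{t\}^\perp\in\mathcal{P}_\perp(\Pi)\subseteq\mathcal{P}_\bullet(\Pi)$.

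First I would prove that $P\sqsubseteq_\bullet Q$ implies $P\sqsubseteq Q$. Let $t\in\mathrm{QP}$ realize $P\sqsubseteq_\bullet Q$, i.e.\ $t\perp P\to_\bullet Q$, and set $f:=\{t\}^\perp\in\Phi$. Since $P\to_\bullet Q={}^\perp P\leadsto_\bullet Q$ (Definition \ref{defi:newop}), the first remark rewrites the hypothesis as ${}^\perp P\leadsto_\bullet Q\subseteq\{t\}^\perp$. As $\{t\}^\perp,Q\in\mathcal{P}_\bullet(\Pi)$ and ${}^\perp P\in\mathcal{P}(\Lambda)$, the adjunction of Theorem \ref{theo:adjunctionbullet} gives $Q\subseteq\{t\}^\perp\ast_\bullet{}^\perp P=f\circ_\bullet P$. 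Recalling that the order of $\mathcal{A}_{\mathcal{K}\bullet}$ is reverse inclusion, this says $f\circ_\bullet P\leq Q$ with $f\in\Phi$, hence $P\sqsubseteq Q$. Conversely, if $f\in\Phi$ witnesses $P\sqsubseteq Q$, then $Q\subseteq f\circ_\bullet P=f\ast_\bullet{}^\perp P$, and since $f,Q\in\mathcal{P}_\bullet(\Pi)$ the same adjunction yields $P\to_\bullet Q={}^\perp P\leadsto_\bullet Q\subseteq f$. Picking $t\in\mathrm{QP}$ with $t\perp f$, i.e.\ $f\subseteq\{t\}^\perp$, we obtain $P\to_\bullet Q\subseteq\{t\}^\perp$, that is $t\perp P\to_\bullet Q$, so $t$ realizes $P\sqsubseteq_\bullet Q$. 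This shows the two relations coincide.

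I do not expect a genuine obstacle here; the only points demanding care are the order-reversal inside $\mathcal{A}_{\mathcal{K}\bullet}$ (``$\leq$'' means ``$\supseteq$'') and checking that the arguments fed into Theorem \ref{theo:adjunctionbullet} really lie in $\mathcal{P}_\bullet(\Pi)$, which is automatic since perpendiculars are closed and $\Phi\subseteq\mathcal{P}_\bullet(\Pi)$. An alternative route would be to pass through Observation \ref{remark:equal} together with the already-established isomorphism $\mathcal{H}_{\mathcal{K}\perp}\cong\mathcal{H}_{\mathcal{A}_{\mathcal{K}\perp}}$, but the direct verification above is shorter and makes the role of the filter $\Phi$ transparent.
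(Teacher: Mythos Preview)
Your proof is correct and follows essentially the same route as the paper: both show that the two preorder relations on the common underlying set $\mathcal{P}_\bullet(\Pi)$ coincide, passing between ``$\exists f\in\Phi,\ f\circ_\bullet P\leq Q$'' and ``$\exists t\in\mathrm{QP},\ t\perp P\to_\bullet Q$'' via the full adjunction between $\circ_\bullet$ and $\to_\bullet$ together with the correspondence $t\in\mathrm{QP}\leftrightarrow\{t\}^\perp\in\Phi$. The only difference is cosmetic: the paper unfolds the equivalence as a chain of biconditionals through $\{t\}^\perp\supseteq({}^\perp Q.R)^{\widehat{}}$, whereas you package the same steps as two implications invoking Theorem~\ref{theo:adjunctionbullet} directly.
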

\begin{proof} 
In both cases the basic sets of the preorders is ${\mathcal
  P}_{\bullet}(\Pi)$.  We only have to 
check that the two definitions of the preorder coincide. The order
relation in in $\mathcal{H}_{\mathcal{A}_{\mathcal{K}\bullet}}$ is
  given by: 
$\exists P\in \Phi\,,\,P \circ_\bullet Q\supseteq R$, which
using the full adjunction can be formulated
equivalently as: $\exists P\in \Phi\;\,P\supseteq Q  \to_{\bullet} R$.

As to the equivalence we have that:
\begin{eqnarray*} & \exists P\in \Phi\,&,\,\quad
P\supseteq Q\to_{\bullet}R\\ \Leftrightarrow\quad & \exists
t\in {\mathrm{QP}}\,&,\,\quad \{t\}^\perp\supseteq
({}^\perp Q.R)^{\widehat{}}\\ \Leftrightarrow\quad &
\exists t\in {\mathrm{QP}}\,&,\,\quad t\perp
({}^\perp Q.R)^{\widehat{}} \\ \Leftrightarrow\quad &
\exists t\in {\mathrm{QP}}\,&,\,\quad
t\perp {}^\perp Q.R\\ \Leftrightarrow\quad & \exists
t\in {\mathrm{QP}}\,&,\,\quad t\perp{} Q\rightarrow R,\\
\end{eqnarray*} and the last line is the definition of the preorder in
${\mathcal H}_{\mathcal K\bullet}$.
\end{proof}

\item\label{item:inverse} Next we consider and adapt to the current context, the
  construction of an inverse of the horizontal map that appears in
  \cite[Definition 5.12]{kn:ocar}.

In this weaker context the map going in the oposite direction than
$\mathcal K \rightarrow {\mathcal A}_{\mathcal K \bullet}$, that
we denote as $\mathcal A \rightarrow {\mathcal K}_{\mathcal A
  \bullet}$, will not be a full inverse but only a \emph{Galois injection} (see Observation \ref{obse:galoisinj}):
\[\xymatrix{\mathcal{AKS} \ar@/_.5pc/[rr]_{\mathcal
  K \rightarrow {\mathcal A}_{\mathcal K \bullet}}
  \ar[rd]&&\mathcal{{}^{\mathcal
      F}OCA}\ar[ld]\ar@/_.5pc/[ll]_{ {\mathcal
      K}_{\mathcal A \bullet}\leftarrow {\mathcal A}}\\ &{\bf HPO}&}\]
\begin{defi}\label{def:aks-from-rdlp} Given a $\foca$ called 
${\mathcal A}=(A,\leq,\rapp,\rimp,{\ck},{\cs},\Phi)$,
we define the structure: ${\mathcal
  K}_{\mathcal A \bullet}=(\Lambda,\Pi,\Perp,\mathrm{app}, \mathrm{push}, \cK,\cS,\mathrm{QP})$
as follows. 
\begin{enumerate}
\item $\Lambda=\Pi:=A$;
\item $\Perp:=\,\leq$\,,\, i.e.\,\, $s\perp\pi :\Leftrightarrow
  s\leq\pi$;
\item ${\rapp}(s,t):=st$ \quad,\quad ${\mathrm{push}}(s,\pi):={\rimp}(s,\pi)=
  s\to\pi$;
\item $\cK := {\ck}\quad ,\quad \cS :={\cs}$;
\item $\mathrm{QP}:=\Phi$.
\end{enumerate}
\end{defi} 
\begin{theo} 
In the notations of Definition \ref{def:aks-from-rdlp},
  ${\mathcal K}_{\mathcal A \bullet}$ is an $\mathcal {AKS}$.
\end{theo}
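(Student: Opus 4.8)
The plan is to verify, clause by clause, the definition of an ${\mathcal {AKS}}$ (Definition~\ref{defi:aks}) for the tuple ${\mathcal K}_{\mathcal A\bullet}$ of Definition~\ref{def:aks-from-rdlp}, using only the axioms of a $\foca$ recorded in Definition~\ref{defi:foca}. First I would note that $(\Lambda,\Pi,\Perp,\operatorname{push})$ is a realizability lattice: $\Perp={\leq}$ is a subset of $\Lambda\times\Pi=A\times A$, and $\operatorname{push}(s,\pi)=s\to\pi=\rimp(s,\pi)$ is a well-defined function $A\times A\to A$. Then $\operatorname{app}(s,t)=st$ is the application of $\mathcal A$, while $\mathrm{QP}=\Phi$ is closed under application and contains $\cK=\ck$ and $\cS=\cs$ by the filter axiom (FI). Hence the only substantial content lies in clause~(3) of Definition~\ref{defi:aks}, i.e.\ the three reduction rules governing $\cK$ and $\cS$.

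Before checking those I would translate the primitives according to Definition~\ref{def:aks-from-rdlp}: the relation $t\perp\pi$ means $t\leq\pi$; a pushed stack $s\cdot\pi$ is $s\to\pi$; and, since $\cdot$ associates to the right while application associates to the left, a chain $t_1\cdot t_2\cdots t_n\cdot\pi$ equals $t_1\to(t_2\to\cdots\to(t_n\to\pi))$, whereas $t_1t_2\cdots t_n$ means $(\cdots(t_1t_2)\cdots)t_n$. The engine of the three verifications is the full adjunction property $\text{(PE)}'$ of a $\foca$: from $ab\leq c$ one infers $a\leq b\to c$.

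With these translations the rules become short inequalities in $\mathcal A$. For the basic reduction rule: if $t\perp s\cdot\pi$, i.e.\ $t\leq s\to\pi$, then axiom (PA) gives $ts\leq\pi$, i.e.\ $ts\perp\pi$. For the $\cK$-rule: assume $t\perp\pi$, i.e.\ $t\leq\pi$; axiom (PK) with $a=t$, $b=s$ gives $\cK t s\leq t\leq\pi$, and two applications of $\text{(PE)}'$ yield first $\cK t\leq s\to\pi$ and then $\cK\leq t\to(s\to\pi)$, i.e.\ $\cK\perp t\cdot s\cdot\pi$ for every $s$. For the $\cS$-rule: assume $tu(su)\perp\pi$, i.e.\ $tu(su)\leq\pi$; axiom (PS) with $a=t$, $b=s$, $c=u$ gives $\cS t s u\leq tu(su)\leq\pi$, and three applications of $\text{(PE)}'$ give in turn $\cS t s\leq u\to\pi$, then $\cS t\leq s\to(u\to\pi)$, then $\cS\leq t\to(s\to(u\to\pi))$, i.e.\ $\cS\perp t\cdot s\cdot u\cdot\pi$. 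This exhausts the axioms of an ${\mathcal {AKS}}$.

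I do not expect a genuine obstacle: every clause collapses to a one- or two-line computation in the ordered combinatory algebra. The only points needing care are the bookkeeping of the opposite associativity conventions for application and push, and the observation that it is exactly the full adjunction $\text{(PE)}'$ — available precisely because $\mathcal A$ is a $\foca$ and not merely a $\qoca$ or a $\ioca$ — that lets one ``uncurry'' $\cK t s\leq\pi$ and $\cS t s u\leq\pi$ back into the perpendicularity assertions demanded by the ${\mathcal {AKS}}$ axioms; in the $\ioca$ setting this step would instead force the insertion of the adjunctor $\ce$ into the chosen combinators.
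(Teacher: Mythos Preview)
Your proposal is correct and follows essentially the same route as the paper: after noting that $\mathrm{QP}=\Phi$ is closed under application and contains $\cK,\cS$, the three reduction rules are verified by translating $\perp$ as $\leq$ and $\cdot$ as $\to$, then invoking (PA) for rule~(a) and (PK)/(PS) followed by repeated use of the full adjunction $\text{(PE)}'$ for rules~(b) and~(c). Your write-up is somewhat more explicit about the bookkeeping (associativity conventions, the role of $\text{(PE)}'$ versus a mere $\ioca$), but the argument is the same.
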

\begin{proof} 
It is clear that $\mathrm{QP}$ is closed under application
  and contains $\cK,\cS$. Next we check the axioms
  concerning the orthogonality relation (see Definition
  \ref{defi:aks}). Substituting the above definitions, these axioms
  become:

\noindent $
\begin{array}{l@{\hspace{.5cm}}r@{\;\leq\;}l@{\quad\Rightarrow\quad}r@{\;\leq\;}l} \textrm{(S1)} &t & u\to\pi& tu&\pi \\
\textrm{(S2)} &t&\pi &{\ck}& t\to u\to\pi
\\ \textrm{(S3)} & tv(uv)&\pi &
  {\cs}
   & t\to u\to
   v\to\pi
\end{array}$

\medskip
(S1)\ follows from Definition \ref{defi:ioca},(PA).

(S2)\ is shown by the following derivation based on the definition of
$\ck$ (\ref{defi:ioca},(PK)) and the full adjunction,
i.e. \ref{defi:ioca},(PE)':
\[ t \leq  \pi \Rightarrow {\ck} tu\leq \pi \Rightarrow 
{\ck} t \leq u\to\pi \Rightarrow {\ck} \leq t\to u\to\pi.\]

(S3)\ is proved using repeatedly the full adjunction property as before as well as \ref{defi:ioca},(PS):
\[ tv(uv)\leq \pi \Rightarrow {\cs}
tuv\leq\pi \Rightarrow {\cs} tu \leq v\to\pi \Rightarrow {\cs}t
 \leq u\to v\to\pi \Rightarrow {\cs}
\leq t\to u\to v \to \pi.\]

\end{proof}
\begin{obse} To avoid confusion, when we view $A$ as the set $\Pi$ of the corresponding ${\mathcal{AKS}}$ $\mathcal K_{\mathcal A\bullet}$ 
we write it as $A_{\Pi}$ and when we view it as $\Lambda$ we write
$A_{\Lambda}$.
\end{obse}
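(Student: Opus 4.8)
The final statement is a notational convention, not a proposition: it carries no mathematical content and therefore requires no proof. The plan is simply to record why the convention is introduced and how it will be used. In Definition \ref{def:aks-from-rdlp} one sets $\Lambda=\Pi:=A$, so the single underlying set $A$ of the $\foca$ ${\mathcal A}$ simultaneously serves as the set of terms and as the set of stacks of the associated ${\mathcal{AKS}}$ ${\mathcal K}_{\mathcal A\bullet}$. I would introduce the subscripts $A_\Lambda$ and $A_\Pi$ to mark which of these two roles a given occurrence of $A$ is playing, with the tacit understanding that $A_\Lambda=A_\Pi=A$ as sets.

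The reason to isolate this convention is that the structure maps of ${\mathcal K}_{\mathcal A\bullet}$ mix the two roles and would otherwise be ambiguous: the orthogonality relation $\Perp$ is the order $\leq$ read as a subset of $A_\Lambda\times A_\Pi$; the application ${\rapp}(s,t)=st$ sends a pair in $A_\Lambda\times A_\Lambda$ to $A_\Lambda$; and the push ${\mathrm{push}}(s,\pi)=s\to\pi$ sends a term $s\in A_\Lambda$ together with a stack $\pi\in A_\Pi$ to a stack in $A_\Pi$. Keeping the subscripts visible makes each argument and value unambiguous, which is precisely what will be needed when comparing the preorders and triposes obtained from ${\mathcal A}$ directly with those obtained by first passing to ${\mathcal K}_{\mathcal A\bullet}$. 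Since nothing is asserted beyond fixing this notation, there is no step to single out as the main obstacle.
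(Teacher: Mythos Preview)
Your assessment is correct: the observation is a pure notational convention with no mathematical content, and the paper offers no proof either. Your explanation of why the subscripts are needed matches the paper's intent, so there is nothing to add.
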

\item In what follows we compare both constructions, considering the
  relationship between ${\mathcal A}$, the original $\foca$,  and the
  iterated construction of $\mathcal A_{\mathcal K_{\mathcal
      A\bullet}\bullet}$.
\begin{lema}\label{lem:galois}
Let $\mathcal A$ be a $\foca$   and ${\mathcal K}_{\mathcal A\bullet}$
the $\mathcal{AKS}$ of Definition~\ref{def:aks-from-rdlp}.
\begin{enumerate}
\item \label{lem:galois-u} For $C\subseteq A_{\Pi}$ we have ${}^\perp
  C ={\downarrow}(\inf C)$, and for $C\subseteq A_{\Lambda}$ we have
  $C^\perp={\uparrow}(\sup C)$.
\item \label{lem:galois-part} In particular{\emph {:}}
\begin{equation}\label{eq:galois-part}
\begin{array}{cccccccc}
  {}^{\perp}({\uparrow}a)&={\downarrow}a;
  {}^{\perp}({\downarrow}a)&=\perp; {}^\perp a&={\downarrow}a;
  ({}^{\perp}a)^{\perp}&= {\uparrow}a; \overline{{\uparrow}a}&=
  {\uparrow}a; \widehat{({\uparrow}a)}&={\uparrow}a &\mbox{for}\, a \in
  A_{\Pi}; \\ ({\downarrow}a)^\perp
  &={\uparrow}a;({\uparrow}a)^{\perp}&=\top; a^{\perp}&={\uparrow}a ;
  {}^{\perp}(a^{\perp})&={\downarrow}a; \overline{{\downarrow}a}&=
  {\downarrow}a; \widehat{({\downarrow}a)}&= {\downarrow}a &\mbox{for}\,
  a \in A_{\Lambda}.
\end{array}
\end{equation}

\item\label{lem:galois-pbot} If and $D\subseteq A_{\Pi}$, then
  $\overline{D}= {\uparrow}(\inf D)$ and
  $\widehat{D}=\bigcup\{{\uparrow}c: c \in D\}$. In other words:
  $\mathcal P_{\perp}(A_{\Pi})$ consists of the set of all principal
  filters of $A$ and $\mathcal P_{\bullet}(A_{\Pi})$ is the set of all
  subsets that are union of principal filters of $A$. In particular
  $\inf D =\inf(\widehat{D})=\inf(\overline{D})$.
%% $P_D=\bigcup\{{\uparrow} d: d \in D\}$, then
  %% ${\mathcal P}_\bullet(A)=\{P_D: D \subseteq A\}$.  consists
  %% precisely of
  
\item\label{lem:galois-infimp} For $C,D\in {\mathcal P}(A_{\Pi})$ we
  have $(\inf C\to \inf D) \leq \inf(C \rightarrow_{\perp} D)=\inf(C
  \rightarrow_{\bullet} D)=\inf(C\rightarrow D)$.
\end{enumerate}
\end{lema}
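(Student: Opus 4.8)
The four parts are best dealt with in order, since \eqref{lem:galois-part} consists of substitution instances of \eqref{lem:galois-u}, \eqref{lem:galois-pbot} follows by composing the two polar maps computed in \eqref{lem:galois-u}, and \eqref{lem:galois-infimp} is a short computation that moreover invokes \eqref{lem:galois-pbot} for the equalities on its right-hand side. It is useful to note at the outset that an $\inf$-complete poset is automatically a complete lattice — $\sup S$ is the infimum of the set of upper bounds of $S$, which is nonempty since it contains $\top=\inf\emptyset$ — so $\sup$ is available throughout. For \eqref{lem:galois-u}: unwinding Definition~\ref{def:aks-from-rdlp}, where $\perp$ is $\leq$, one has ${}^\perp C=\{t\in A:\forall\pi\in C,\ t\leq\pi\}$, i.e.\ the set of lower bounds of $C$, which is exactly ${\downarrow}(\inf C)$; dually $C^\perp$ is the set of upper bounds of $C$, namely ${\uparrow}(\sup C)$. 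Then \eqref{lem:galois-part} is obtained by feeding into \eqref{lem:galois-u} the cases $C=\{a\}$, $C={\uparrow}a$, $C={\downarrow}a$ and using $\inf\{a\}=a=\sup\{a\}$, $\inf{\uparrow}a=a=\sup{\downarrow}a$ (Observation~\ref{obse:essentiallysur}(3)), $\inf{\downarrow}a=\bot$, $\sup{\uparrow}a=\top$; the identities with $\overline{(\ )}$ and $\widehat{(\ )}$ then follow from one more application of a polar map together with $\overline{\{\pi\}}=({}^\perp\{\pi\})^{\perp}=({\downarrow}\pi)^{\perp}={\uparrow}\pi$ (and the symmetric computation on the $\Lambda$ side).

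For \eqref{lem:galois-pbot} one composes: $\overline{D}=(({}^\perp D)^\perp)=({\downarrow}(\inf D))^\perp={\uparrow}\bigl(\sup{\downarrow}(\inf D)\bigr)={\uparrow}(\inf D)$, and $\widehat{D}=\bigcup\{\overline{\{\pi\}}:\pi\in D\}=\bigcup\{{\uparrow}\pi:\pi\in D\}$. Hence the $\perp$-closed subsets of $A_\Pi$ are precisely the principal filters of $A$, and the $\bullet$-closed ones are precisely the up-sets of $A$, i.e.\ the arbitrary unions of principal filters. Finally $D\subseteq\widehat D\subseteq\overline D={\uparrow}(\inf D)$ gives $\inf D\geq\inf\widehat D\geq\inf\overline D=\inf\bigl({\uparrow}(\inf D)\bigr)=\inf D$, so the three infima coincide.

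The only item needing real work is \eqref{lem:galois-infimp}. Using ${}^\perp C={\downarrow}(\inf C)$ and $\mathrm{push}(s,\pi)=s\to\pi$, I would first compute $C\to D={}^\perp C\leadsto D=\{\,s\to\pi:\ s\leq\inf C,\ \pi\in D\,\}$. For a fixed $\pi$, since $\to$ is antitone in its first argument (Definition~\ref{defi:ioca}, (OP)) and $\inf C$ is the greatest element of ${\downarrow}(\inf C)$, the element $\inf C\to\pi$ is the least member of $\{s\to\pi:s\leq\inf C\}$, so $\inf\{s\to\pi:s\leq\inf C\}=\inf C\to\pi$; thus $\inf(C\to D)=\inf_{\pi\in D}(\inf C\to\pi)$. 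Because $\to$ is monotone in its second argument and $\inf D\leq\pi$ for every $\pi\in D$, we get $\inf C\to\inf D\leq\inf C\to\pi$ for all such $\pi$, and taking the infimum over $\pi\in D$ yields $\inf C\to\inf D\leq\inf(C\to D)$. For the two equalities, Definition~\ref{defi:newop} and the definitions of $\leadsto_\bullet,\leadsto_\perp$ in Definition~\ref{item:adempty} give $C\to_\bullet D=\widehat{C\to D}$ and $C\to_\perp D=\overline{C\to D}$, so \eqref{lem:galois-pbot} (that $\widehat{(\ )}$ and $\overline{(\ )}$ preserve infima) finishes it; alternatively one may combine the inclusions $C\to D\subseteq C\to_\bullet D\subseteq C\to_\perp D$ from Paragraph~\ref{item:rewritingop} with \eqref{lem:galois-pbot}.

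The main obstacle — in fact the only subtle point — is resisting the false hope that $\inf C\to\inf D=\inf(C\to D)$. Equality here would say $\to$ preserves infima in its second slot, which holds in a Heyting algebra but \emph{not} in a general $\foca$, where the adjunction available is $ab\leq c\Leftrightarrow a\leq b\to c$ rather than $a\wedge b\leq c\Leftrightarrow a\leq b\to c$; hence only the inequality can be asserted in \eqref{lem:galois-infimp}. A minor loose end is to check the degenerate cases $C=\varnothing$ or $D=\varnothing$, where $\inf\varnothing=\top$ forces e.g.\ $C\to D=\varnothing$ when $D=\varnothing$; these cause no difficulty since every identity above was derived for arbitrary subsets.
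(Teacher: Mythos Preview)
Your proof is correct and follows the paper's approach closely: the paper dismisses (1)--(3) as ``clear'' and, for (4), obtains the three equalities from the chain $C\to D\subseteq C\to_\bullet D\subseteq C\to_\perp D$ together with part~(3), then derives the inequality from the antitonicity of $\to$ in its first argument --- exactly your route.

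One correction to your closing commentary, though: in a $\foca$ the full adjunction $ab\leq c\Leftrightarrow a\leq b\to c$ says precisely that, for each fixed $b$, the map $b\to(-)$ is right adjoint to $(-)b$, and therefore $b\to(-)$ \emph{does} preserve infima. The paper's own proof of (4) invokes exactly this fact (``preservation of limits by right adjoints'') to obtain $\inf\{\inf C\to d:d\in D\}=\inf C\to\inf D$; combined with your computation $\inf(C\to D)=\inf_{\pi\in D}(\inf C\to\pi)$ this actually yields equality in (4), not merely the $\leq$ that is stated. Your monotony argument suffices for what the lemma asserts, but the ``false hope'' you warn against is in fact true here.
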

\begin{proof}
The verification of the first three items is clear, the remaining can be proved as follows.

$C\rightarrow D={}^\perp C . D \subseteq ({}^\perp C
. D)^{\widehat{}}=C\rightarrow_{\bullet} D \subseteq ({}^\perp C
. D)^{-}=C\rightarrow_{\perp} D$, then $\inf(C \rightarrow_{\perp}
D)=\inf(C \rightarrow_{\bullet} D)=\inf(C\rightarrow D)$ --see part
\eqref{lem:galois-pbot} of this Lemma. The inequality that remains to be  proved, is a consequence of the antimonotony of the arrow with respect to the first variable:  $\inf\{\inf C \rightarrow d: d \in D\} \leq \inf\{c \rightarrow d: c \leq \inf C\,,\,d \in D\} \leq \inf (C \rightarrow D)$. Clearly $\inf\{\inf C \rightarrow d: d \in D\}= \inf C \rightarrow \inf D$ by the preservation of limits by right adjoints and hence the conclusion follows. 
 
\end{proof}

\begin{theo}\label{theo:galois-inv} Let $\mathcal A$ be a $\foca$, ${\mathcal K}_{\mathcal A\bullet}$ 
the $\mathcal{AKS}$ of Definition~\ref{def:aks-from-rdlp} and
$\mathcal A_{\mathcal K_{\mathcal A\bullet}\bullet}= ({\mathcal
  P}_{\bullet}(A_{\Pi}), \circ_\bullet, \rightarrow_\bullet,
\supseteq, \ck_\bullet, \cs_\bullet,\Phi)$ the associated $\foca$
--Theorem \ref{theo:akstoocabullet}--.  Then, the functors:
\[\xymatrix{(A,\leq)\ar@<.5ex>[r]^-{\iota}& ({\mathcal P}_\bullet(A_{\Pi}),\supseteq )\ar@<.5ex>[l]^-{\rho}},\] 
$\iota:A\to{\mathcal P}_{\bullet}(A_{\Pi}),\, a\mapsto {\uparrow} a$
and $\rho: {\mathcal P}_{\bullet}(A_{\Pi})\to A,\, C\mapsto \inf C$,
form and adjoint pair (a Galois connection), i.e. if $a \in A$, $C
\subseteq A_{\Pi}\,,\, \widehat{C}=C$ then:
\begin{equation}\label{eqn:galois-con}a \leq \rho(C) \Leftrightarrow \iota(a) \leq C :\Leftrightarrow \iota(a) \supseteq C.
\end{equation}
and the unit of the adjunction $\iota \dashv \rho$ is an isomorphism
: $\operatorname{id}_A \cong \rho\,\iota$, while the counit is the
natural inclusion $\iota\,\rho \leq \operatorname{id}_{\mathcal
  P_\bullet(A)}$.
\end{theo}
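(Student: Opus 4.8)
The plan is to verify everything directly from the definitions, since the statement amounts to the elementary fact that forming principal up‑sets and taking infima are adjoint, once $\mathcal P_\bullet(A_\Pi)$ has been identified. First I would record that both maps are well defined and monotone: $\iota(a)=\uparrow a$ lies in $\mathcal P_\bullet(A_\Pi)$ because $\widehat{(\uparrow a)}=\uparrow a$ by Lemma \ref{lem:galois}\eqref{lem:galois-part}; $\rho(C)=\inf C$ exists for every $C\subseteq A_\Pi$ since $A$ is $\inf$‑complete by Definition \ref{defi:foca}; and monotonicity is immediate, as $a\leq b$ gives $\uparrow a\supseteq\uparrow b$, i.e. $\iota(a)\leq\iota(b)$ in the reverse–inclusion order of $\mathcal P_\bullet$, while $C\supseteq D$ gives $\inf C\leq\inf D$.

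The adjunction \eqref{eqn:galois-con} is then nothing but the universal property of the infimum. For $a\in A$ and $C\in\mathcal P_\bullet(A_\Pi)$, the relation $\iota(a)\leq C$ means, by definition of the order on $\mathcal P_\bullet(A_\Pi)$, that $\uparrow a\supseteq C$; this says exactly that $a\leq c$ for all $c\in C$, i.e. that $a$ is a lower bound of $C$, i.e. $a\leq\inf C=\rho(C)$. Hence $a\leq\rho(C)\Leftrightarrow\iota(a)\leq C$, which is precisely $\iota\dashv\rho$.

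For the unit and counit I would compute $\rho\iota(a)=\inf(\uparrow a)=a$, using Observation \ref{obse:essentiallysur}(3) ($\inf(\uparrow d_0)=d_0$), so that $\rho\iota=\operatorname{id}_A$ on the nose, in particular $\rho\iota\cong\operatorname{id}_A$; and for $C\in\mathcal P_\bullet(A_\Pi)$ every $c\in C$ satisfies $\inf C\leq c$, whence $C\subseteq\uparrow(\inf C)$, i.e. $\iota\rho(C)\supseteq C$, which in the order of $\mathcal P_\bullet(A_\Pi)$ reads $\iota\rho(C)\leq C$. Thus the counit is the natural inclusion $\iota\rho\leq\operatorname{id}_{\mathcal P_\bullet(A)}$, as claimed.

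There is no genuine obstacle; the only points requiring care are bookkeeping ones. One must keep in mind throughout that the order on $\mathcal P_\bullet(A_\Pi)$ is reverse inclusion, so that \textquotedblleft$\iota(a)\leq C$\textquotedblright\ unpacks to \textquotedblleft$\uparrow a\supseteq C$\textquotedblright; and one invokes Lemma \ref{lem:galois}\eqref{lem:galois-pbot}, which describes $\mathcal P_\bullet(A_\Pi)$ as exactly the set of unions of principal filters of $A$ and gives $\inf C=\inf(\widehat C)$, to justify that $\iota$ lands in $\mathcal P_\bullet(A_\Pi)$ and that $\rho$ is insensitive to passing to the $\bullet$–closure.
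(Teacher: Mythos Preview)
Your proof is correct and follows essentially the same approach as the paper's: both verify that $\iota$ lands in $\mathcal P_\bullet(A_\Pi)$ via Lemma~\ref{lem:galois}, check monotonicity, and establish the adjunction together with the computations $\rho\iota=\operatorname{id}_A$ and $\iota\rho(C)=\uparrow(\inf C)\supseteq C$. The only cosmetic difference is that the paper first establishes the unit and counit and then derives \eqref{eqn:galois-con} from them, whereas you prove the biconditional \eqref{eqn:galois-con} directly from the universal property of $\inf$ and then compute unit and counit separately; the paper also notes in passing that $\iota\rho(C)=\overline{C}$ using Lemma~\ref{lem:galois}\eqref{lem:galois-pbot}, which you replace by the elementary inclusion $C\subseteq\uparrow(\inf C)$.
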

\begin{proof} The codomain of $\iota$ lies in $\mathcal P_{\bullet}(A_{\Pi})$, see Lemma \ref{lem:galois}, (2)
where we proved that ${\uparrow}a \in \mathcal P_{\bullet}(A_{\Pi})$,
and it is clear that the maps $\iota$ and $\rho$ are monotonic.  As
$\inf({\uparrow} a)=a$ it follows that $\rho \iota =
\operatorname{id}_A$. Also for $C \subseteq A_{\Pi}\,,\,
\widehat{C}=C$, $\iota(\rho(C))= {\uparrow}(\inf C)=\overline{C}
\supseteq C= \operatorname{id}_{\mathcal P_{\bullet}(A_{\Pi})}(C)$.

Once we have the unit and counit, the proof of the assertion
\eqref{eqn:galois-con} follows directly.
\end{proof}

\begin{obse}\label{obse:galoisinj} \begin{enumerate}
\item A pair of preorders as above, equipped with a Galois connection \[\xymatrix{(A,\leq)\ar@<.5ex>[r]^-{\iota}& (B,\leq)\ar@<.5ex>[l]^-{\rho}}, \quad \mathrm{i.e.}\,\, \iota \dashv \rho,\]
with the property that the unit is an isomorphism, is sometimes called
a \emph{Galois injection}. It is easy to prove that to give such a
connection is equivalent to give an operator on $B$ satisfying the following axioms:
\begin{enumerate}
\item $b \leq b' \Rightarrow \zeta(b)\leq \zeta(b')$,
\item $\zeta(b)\leq b$,
\item $\zeta \zeta=\zeta$.
\end{enumerate}
To prove the above assertion just take $\zeta:=\iota\,\rho :B
\rightarrow B$. Conversely, given $\zeta: B \rightarrow B$ as above,
take $A=\zeta(B)$, $\rho=\zeta$  and $\iota=\operatorname{inc}$. 
\item
In our
case the operator $\zeta:{\mathcal P}_{\bullet}(A_{\Pi}) \rightarrow
{\mathcal P}_{\bullet}(A_{\Pi})$ is $\zeta(C)=\overline{C}$.
\end{enumerate}
\end{obse}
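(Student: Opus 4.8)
The plan is to prove the stated equivalence by exhibiting the two constructions announced in the observation, checking that each produces data of the required type, and noting that they are mutually inverse; the second assertion then follows by a one-line computation. Throughout I keep in mind that we work in preorders, so that idempotence in axiom~(c) is read up to the canonical isomorphism $\cong$, and that in the application of interest the order on ${\mathcal P}_\bullet(A_{\Pi})$ is the \emph{reversed} inclusion $\supseteq$. This reversal is the one genuine source of possible sign errors, and it is the point I would check most carefully.

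First I would treat the passage from a Galois injection to an operator. Given $\iota\dashv\rho$ with $\iota:(A,\leq)\to(B,\leq)$ the left adjoint, $\rho:(B,\leq)\to(A,\leq)$ the right adjoint, and unit an isomorphism $\rho\iota\cong\id_A$, set $\zeta:=\iota\rho:B\to B$. Axiom~(a) is immediate since $\iota$ and $\rho$ are monotone. Axiom~(b), $\zeta(b)\leq b$, is exactly the counit inequality $\iota\rho\leq\id_B$ that is part of the definition of $\iota\dashv\rho$ (see the definition of adjoint in Paragraph~\ref{item:recall}). For axiom~(c) I compute $\zeta\zeta=\iota(\rho\iota)\rho\cong\iota\,\id_A\,\rho=\iota\rho=\zeta$, where the isomorphism uses precisely the hypothesis that the unit is invertible.

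Next I would treat the reverse passage. Given $\zeta:B\to B$ satisfying (a)--(c), put $A:=\zeta(B)$ with the induced order, $\rho:=\zeta$ regarded as a map $B\to A$ (well defined since $\zeta(b)\in\zeta(B)$), and $\iota:=\operatorname{inc}:A\hookrightarrow B$; both are monotone by (a). To verify $\iota\dashv\rho$ I would use the characterization of adjunction in Observation~\ref{obse:essentiallysur}(1): for $a\in A$ and $b\in B$ one needs $a\leq b\Leftrightarrow a\leq\zeta(b)$. The implication $\Leftarrow$ follows from (b) since $\zeta(b)\leq b$; for $\Rightarrow$, writing $a=\zeta(c)$ and using (c) gives $\zeta(a)=a$, so applying (a) to $a\leq b$ yields $a=\zeta(a)\leq\zeta(b)$. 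Finally, for $a=\zeta(c)\in A$ one has $\rho\iota(a)=\zeta(a)=\zeta\zeta(c)=\zeta(c)=a$ by (c), so $\rho\iota=\id_A$ and the unit is an isomorphism. The two assignments are mutually inverse: starting from $\zeta$ one recovers $\iota\rho=\zeta$ on the nose, and starting from a Galois injection the inclusion $\iota$ is order reflecting (being split by $\rho$) and restricts to an equivalence onto its image $\zeta(B)=\iota\rho(B)$, where $\rho$ is essentially surjective because $\rho\iota\cong\id_A$; so the roundtrip returns the same connection up to equivalence of preorders.

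It remains to identify $\zeta$ in the situation of Theorem~\ref{theo:galois-inv}, where $B={\mathcal P}_\bullet(A_{\Pi})$ is ordered by $\supseteq$, $\iota(a)={\uparrow}a$ and $\rho(C)=\inf C$. Then $\zeta(C)=\iota\rho(C)=\iota(\inf C)={\uparrow}(\inf C)=\overline{C}$, the last equality being Lemma~\ref{lem:galois}(3). As a sanity check one verifies directly that $\overline{(\ )}$ obeys (a)--(c) for the order $\supseteq$: it is monotone for $\subseteq$ hence for $\supseteq$ (axiom~(a)); $\overline{C}\supseteq C$ reads $\zeta(C)\leq C$ (axiom~(b)); and $\overline{\overline{C}}=\overline{C}$ (axiom~(c)). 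I expect the main obstacle to be purely bookkeeping: keeping the reversed order straight, so that the closure operator $\overline{(\ )}$ correctly appears as a \emph{deflationary} idempotent for $\leq\,=\,\supseteq$, and reading axiom~(c) up to $\cong$ in the preorder setting.
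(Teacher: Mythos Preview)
Your proposal is correct and follows exactly the approach sketched in the paper's observation itself: the paper merely announces the two constructions (take $\zeta:=\iota\rho$; conversely take $A=\zeta(B)$, $\rho=\zeta$, $\iota=\operatorname{inc}$) without verifying the details, and you have filled those in carefully, including the identification $\zeta(C)=\overline{C}$ via Lemma~\ref{lem:galois}(3). Your attention to the reversed order $\supseteq$ on ${\mathcal P}_\bullet(A_\Pi)$ and to reading idempotence up to $\cong$ in the preorder setting is appropriate and goes beyond what the paper makes explicit.
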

\item{\em From $\koca$\hspace*{1pt}s to $\mathcal{AKS}$s.}

Let us look again at the above triangle of constructions. 
 \[\xymatrix{\mathcal{AKS} \ar@/_.5pc/[rr]_{\mathcal
  K \rightarrow {\mathcal A}_{\mathcal K \bullet}}
  \ar[rd]_-{\mathcal S}&&\mathcal{{}^{\mathcal
      F}OCA}\ar[ld]^-{\mathcal H}\ar@/_.5pc/[ll]_{ {\mathcal
      K}_{\mathcal A \bullet}\leftarrow {\mathcal A}}\\ &{\bf HPO}&}\]

We want to show that the family of ${\bf HPO}$s produced by $\foca$s,
coincides with the family of ${\bf HPO}$s produced by Streicher's construction --named $\mathcal S$ in the diagram. This is a direct consequence of the following theorem:
\begin{theo}\label{theo:main3} In the notations above, the diagram \[\xymatrix{\mathcal{AKS} \ar[rd]_-{\mathcal S}&&\mathcal{{}^{\mathcal
      F}OCA}\ar[ld]^-{\mathcal H}\ar[ll]_{ {\mathcal
        K}_{\mathcal A \bullet}\leftarrow {\mathcal A}}\\ &{\bf
      HPO}&}\] is commutative, up to equivalence of preorders.  

%% The associated indexed triposes ${{\bf
%%       P}}({\mathcal A})$ and ${{\bf P}}_{\bullet}({\mathcal
%%     K}_{\mathcal A})$ are equivalent (see Paragraphs
%%   \ref{item:entailmentoca},\ref{item:entailmentaks}).
%% Definitions
%%   ~\ref{lem:oca-islat}-\ref{lem:oca-islat-func} and
%%   in~\ref{lem:aks-iord}-\ref{lem:aks-iord-func} respectively).
\end{theo}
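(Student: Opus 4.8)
The plan is to unwind both composites around the triangle and then read off the comparison from the Galois injection of Theorem~\ref{theo:galois-inv}. Fix a $\foca$ $\mathcal A=(A,\leq,\rapp,\rimp,\ck,\cs,\Phi)$ and abbreviate $\mathcal K:=\mathcal K_{\mathcal A\bullet}$, so that by Definition~\ref{def:aks-from-rdlp} we have $\Lambda=\Pi=A$, $\perp\,=\,\leq$, $\mathrm{push}=\rimp$, $\mathrm{app}=\rapp$, $\cK=\ck$, $\cS=\cs$ and $\mathrm{QP}=\Phi$. Streicher's construction $\mathcal S$ sends $\mathcal K$ to the preorder $\mathcal H_{\mathcal K\bullet}=({\mathcal P}_\bullet(A_\Pi),\sqsubseteq_\bullet)$, which is a Heyting preorder by Theorem~\ref{theo:pakstooca}, whereas $\mathcal H(\mathcal A)=\mathcal H_{\mathcal A}=(A,\sqsubseteq)$. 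So the theorem reduces to exhibiting an equivalence of preorders between $(A,\sqsubseteq)$ and $({\mathcal P}_\bullet(A_\Pi),\sqsubseteq_\bullet)$, and the obvious candidates are the maps $\iota(a)={\uparrow}a$, $\rho(C)=\inf C$ of Theorem~\ref{theo:galois-inv} (recall $\iota$ already takes values in ${\mathcal P}_\bullet(A_\Pi)$).

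Before testing compatibility of $\iota,\rho$ with the two $\sqsubseteq$-relations I would isolate the one computation that carries the argument. Using Lemma~\ref{lem:galois}, parts~\eqref{lem:galois-u}--\eqref{lem:galois-pbot} --- that is ${}^\perp{\uparrow}a={\downarrow}a$, ${}^\perp C={\downarrow}(\inf C)$ and $\inf\widehat D=\inf D$ --- together with the antitonicity of the Heyting implication of $\mathcal A$ in its first argument and the preservation of infima by the right adjoint $\inf C\to(-)$, one obtains for $C,D\in{\mathcal P}(A_\Pi)$
\[
\inf(C\to_\bullet D)=\inf\bigl(({}^\perp C\leadsto D)^{\wedge}\bigr)=\inf\{\,s\to\pi : s\leq\inf C,\ \pi\in D\,\}=\inf C\to\inf D ,
\]
and in particular $\inf({\uparrow}a\to_\bullet{\uparrow}b)=a\to b$. (Observation~\ref{remark:equal} shows that replacing $\to_\bullet$ here by $\to$ or $\to_\perp$ changes neither the perpendicular nor the infimum, which is the same phenomenon.)

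The verification then consists of four routine checks. (i) \emph{$\iota$ is monotone}: if $a\sqsubseteq b$, choose $f\in\Phi$ with $fa\leq b$; the full adjunction (PE)' of Definition~\ref{defi:foca} gives $f\leq a\to b=\inf({\uparrow}a\to_\bullet{\uparrow}b)$, i.e.\ $f\perp{\uparrow}a\to_\bullet{\uparrow}b$, and since $f\in\mathrm{QP}$ this is exactly ${\uparrow}a\sqsubseteq_\bullet{\uparrow}b$ in the sense of Definition~\ref{defi:squareorder}. (ii) \emph{$\rho$ is monotone}: if $C\sqsubseteq_\bullet D$, choose $t\in\mathrm{QP}=\Phi$ with $t\perp C\to_\bullet D$, i.e.\ $t\leq\inf(C\to_\bullet D)=\inf C\to\inf D$; by (PA) then $t(\inf C)\leq\inf D$, so $\inf C\sqsubseteq\inf D$. (iii) $\rho\iota=\operatorname{id}_A$, because $\inf{\uparrow}a=a$. (iv) $\iota\rho(C)={\uparrow}(\inf C)=\overline C$ by Lemma~\ref{lem:galois}\,\eqref{lem:galois-pbot}, and $\overline C\cong C$ in $({\mathcal P}_\bullet(A_\Pi),\sqsubseteq_\bullet)$: the inclusion $C\subseteq\overline C$ already gives $\overline C\sqsubseteq_\bullet C$ (realized by $\cI$), while $C\sqsubseteq_\bullet\overline C$ is the essential-surjectivity argument in the proof of Lemma~\ref{lem:eqpo} (using $\cI\perp{}^\perp C\cdot C$ and ${}^\perp\overline C={}^\perp C$). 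Items (i)--(iv) say that $\iota$ and $\rho$ are mutually weakly inverse monotone maps; hence they form an equivalence of preorders, and the triangle commutes up to equivalence.

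The single load-bearing step is the displayed identity $\inf(C\to_\bullet D)=\inf C\to\inf D$: it is precisely what makes the realizers of $\sqsubseteq_\bullet$ between principal filters coincide, on the nose, with the realizers of $\sqsubseteq$ in $\mathcal A$, so that $\iota$ and $\rho$ interchange the two preorder structures. Everything else is bookkeeping with the three closure/interior operators, entirely supplied by Lemma~\ref{lem:galois}, Observation~\ref{remark:equal} and the proof of Lemma~\ref{lem:eqpo}; the proof introduces no idea beyond the Galois injection of Theorem~\ref{theo:galois-inv}.
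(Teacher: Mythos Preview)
Your argument is correct, and the overall strategy coincides with the paper's: both identify $\mathcal S(\mathcal K_{\mathcal A\bullet})$ with $({\mathcal P}_\bullet(A_\Pi),\sqsubseteq_\bullet)$ and compare it to $(A,\sqsubseteq)$ via the Galois pair $\iota={\uparrow}(-)$, $\rho=\inf$ of Theorem~\ref{theo:galois-inv}. The organisation differs slightly. The paper invokes the criterion of Observation~\ref{obse:essentiallysur} and checks that $\rho$ is essentially surjective (immediate from $\rho\iota=\id$) and order reflecting: from $r\inf C\leq\inf C'$ and monotony of application one gets $rs\leq\pi$ for all $s\leq\inf C$, $\pi\in C'$, which is exactly the realizability condition for $C\sqsubseteq_\bullet C'$. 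You instead exhibit the explicit weak inverse pair and verify monotonicity of both $\iota$ and $\rho$; for this you prove the sharper identity $\inf(C\to_\bullet D)=\inf C\to\inf D$, which upgrades the inequality recorded in Lemma~\ref{lem:galois}\,\eqref{lem:galois-infimp} to an equality (your use of ``$\inf C$ is itself in the index set'' together with preservation of infima by the right adjoint $b\to(-)$ is the point). Either route works; yours is a bit more symmetric and makes the monotonicity of $\rho$ explicit, while the paper's order-reflecting argument needs only the one inequality already stated in Lemma~\ref{lem:galois}.
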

\begin{proof}
If ${\mathcal K}$ is an ${\mathcal {AKS}}$ the map ${\mathcal S}$ is
defined as ${\mathcal S}({\mathcal K})= {\mathcal H}_{{\mathcal
    A}_{{\mathcal K}\bullet}}$.

Hence, we need to prove that the Heyting preorders $(A,\sqsubseteq)$
and $({\mathcal P}_{\bullet}(A_{\Pi}),\sqsubseteq_{\bullet})$, are
equivalent.  
%% Let $I$ be a set. The elements of ${{\bf P}}({\mathcal
%%   A})(I)$ are functions $\varphi:I\to A$, and the elements of ${\bf
%%   P}_{\bullet}({\mathcal K}_{\mathcal A})(I)$ are functions
%% ${\psi}:I\to{\mathcal P_{\bullet}}(A_\Pi)$.
Recall that the order relations are the following:
\begin{equation*}\label{eq:entailment1} C,C' \in
  {\mathcal P}_{\bullet}(A_\Pi)\,,\quad C \sqsubseteq_{\bullet} C' \Leftrightarrow
  \exists t\in {\mathrm{QP}}\;\,,\, t\perp
  C \rightarrow_{\bullet}C', 
\end{equation*}
\begin{equation*}\label{eq:entailment2} a,a' \in
  A\,,\quad a \sqsubseteq a' \Leftrightarrow
  \exists r\in \Phi\;\,,\, ra\leq a'. 
\end{equation*}
Consider the map $\rho:{\mathcal P}_{\bullet}(A_{\Pi}) \rightarrow A$, 
$\rho(C)=\inf C$.  

%% Define the natural transformation: \[\tau:{{\bf
%%     P}}_{\bullet}({\mathcal K}_{\mathcal A}) \Rightarrow {{\bf
%%     P}}({\mathcal A})\,,\, (I \stackrel{\psi}{\rightarrow}{\mathcal
%%   P}_{\bullet}(A_{\Pi})) \leadsto (I
%% \stackrel{\psi}{\rightarrow}{\mathcal
%%   P}_{\bullet}(A_{\Pi})\stackrel{\rho}{\rightarrow}A)=I \stackrel{\rho
%%   \psi}{\longrightarrow}A.\] Observe that the naturality of $\tau$
%% follows directly from the definition.
In order to guarantee that $\rho$ is an equivalence, we prove that
$\rho$ is order reflecting and essentially surjective (see Observation
\ref{obse:essentiallysur}).

The surjectivity condition for $\rho$ comes from the fact that the
adjunction $\iota \dashv \rho$ is a Galois injection (see Theorem
\ref{theo:galois-inv}).
%% Given $\varphi:I \rightarrow A$ consider
%% $\psi:=\iota \varphi: I \rightarrow {{\bf P}}_{\bullet}({\mathcal
%%   K}_{\mathcal A})$ where $\iota$ is as in Lemma \ref{lem:galois}. In
%% that situation $\rho \psi = \rho \iota \varphi= \varphi$ by the
%% mentioned lemma.
The fact that $\rho$ is order reflecting is guaranteed by the
following reasoning.  

The assertion $C
\sqsubseteq_{\bullet} C'$ in our situation means that: $\exists t \in
\mathrm{QP}: t \leq b\,\, \forall b \in \{s \rightarrow \pi, s\leq C,
\pi \in C'\}$.
Using the full adjunction we deduce that the condition $C
\sqsubseteq_{\bullet} C'$ is:
\begin{equation}\label{eqn:second2} \exists t \in
\mathrm{QP}=\Phi: ts \leq
           \pi\,\,\forall s \leq C\,,\, \pi \in C'.
\end{equation}

The assertion 
\begin{equation}\label{eqn:third}
\rho(C) \sqsubseteq \rho(C'),
\end{equation} 
can be written as: $\exists r \in \Phi : r\inf C \leq \inf C'$. It is
clear that if $s \leq C$, then $s \leq \inf C$, hence for such an $s
\leq C$, $rs \leq r \inf C \leq \inf C'$. We can take then $t:=r$ to
obtain a proof that from the condition \eqref{eqn:third}, we deduce
\eqref{eqn:second2}, i.e. that $\rho$ is order reflecting.
\end{proof}
\section{Constructing triposes from ordered structures}
\label{section:constripos}
\item\label{item:entailmentaks} In \cite[Section 5]{kn:ocar} we
  illustrated how to construct triposes from ordered combinatory
  algebras and from abstract Krivine structures and the relations
  between them. As the situation is almost the same, we very briefly revist that construction in our situation where we
  are dealing with $\foca$s.

Given an ${\mathcal {AKS}}$ called ${\mathcal K}$, define:
\begin{enumerate}
\item The \emph{entailment relation} $\vdash$ in ${\mathcal
  P}_\bullet(\Pi)^I$ is: $\label{eq:entailment0} \varphi,\psi \in
  {\mathcal P}_\bullet(\Pi)^I\,,\,\varphi \vdash \psi \Leftrightarrow
  \exists t\in {\mathrm{QP}}\;\forall i \in I \quad t\perp
  \varphi(i)\rightarrow_\bullet\psi(i),$ for $\varphi,\psi:I\to {\mathcal
    P}_\bullet(\Pi)$. An element $t \in \Phi$ as above is said to be
  ``a realizer of the entailment $\varphi\vdash\psi$''.
\item \label{lem:aks-iord-monot} For any function $f:J\to I$, $f^*$ is
  the monotonic map $f^*:({{\mathcal P}_\bullet}(\Pi)^I,\vdash)\to
  ({{\mathcal P}_\bullet}(\Pi)^J,\vdash),\, \varphi\mapsto \varphi\circ f$.
\item \label{def:aks-iord2}\label{lem:aks-iord-func} The preceding constructions yield an
indexed preorder:
\[ {\bf P}_\bullet({\mathcal K}):{\bf Set}^{\operatorname{op}}\rightarrow {\bf Ord},\qquad I\mapsto
({{\mathcal P}}(\Pi)^I,\vdash),\qquad f\mapsto f^*.
\]
\end{enumerate}

\begin{rema}
\begin{enumerate}  
\item Notice that the entailment relation above, could have been defined
  using the arrow $\rightarrow$, because $t\perp
  \varphi(i)\rightarrow \psi(i)$ if and only if $t\perp
  \varphi(i)\rightarrow_{\bullet}{}\psi(i)$, compare with Observation
  \ref{remark:equal}.
\item In particular from this follows that if we change in the above
  definition ${\mathcal P}_\bullet(\Pi)$ by ${\mathcal P}(\Pi)$, we may
  define the indexed preorder ${\bf P}(\mathcal K)$ and in this situation we
  have a natural inclusion ${\bf P}_{\bullet}(\mathcal K)
  \hookrightarrow {\bf P}(\mathcal K)$. 
\end{enumerate}
\end{rema}

\begin{lema} The canonical inclusion ${\bf P}_{\bullet}(\mathcal K)
\hookrightarrow {\bf P}(\mathcal K)$ is an equivalence of indexed
preorders.
\end{lema}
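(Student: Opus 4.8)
The plan is to apply Lemma~\ref{lem:equiv-iord}, which reduces the assertion to checking, for every set $I$, that the component $\iota_I:({\mathcal P}_\bullet(\Pi)^I,\vdash)\hookrightarrow({\mathcal P}(\Pi)^I,\vdash)$ is order reflecting and essentially surjective. Order reflection will be immediate: the entailment relation on ${\mathcal P}_\bullet(\Pi)^I$ is by definition the restriction of the one on ${\mathcal P}(\Pi)^I$, and, by the preceding Remark, it does not even matter whether $\rightarrow$ or $\rightarrow_\bullet$ is used to define $\vdash$. I will also observe in passing that $\iota$ really is an indexed monotonic map, since for $f:J\to I$ reindexing is precomposition with $f$ on both sides, so the coherence condition \eqref{eq:pseudo} holds strictly.

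All the real content is therefore in essential surjectivity, which is the indexed analogue of Lemma~\ref{lem:eqpo}. Given $\varphi:I\to{\mathcal P}(\Pi)$ I will take the pointwise closure $\widehat\varphi:I\to{\mathcal P}_\bullet(\Pi)$, $\widehat\varphi(i):=\widehat{\varphi(i)}$, and show that $\iota_I(\widehat\varphi)\cong\varphi$ inside ${\mathcal P}(\Pi)^I$, i.e. $\varphi\vdash\widehat\varphi$ and $\widehat\varphi\vdash\varphi$. The key point that makes the indexed statement go through is that a single realizer works for all $i$ at once. Concretely, Theorem~\ref{item:hatperp=perp}(\ref{iitem:hatperp=perp2}) gives ${}^\perp\varphi(i)={}^\perp\widehat{\varphi(i)}$, so both $\varphi(i)\rightarrow\widehat\varphi(i)$ and $\widehat\varphi(i)\rightarrow\varphi(i)$ are of the form ${}^\perp\varphi(i)\leadsto Q$ with $\varphi(i)\subseteq Q\subseteq\widehat{\varphi(i)}$; since any $p\in{}^\perp\varphi(i)={}^\perp\widehat{\varphi(i)}$ satisfies $p\perp\pi$ for every $\pi\in\widehat{\varphi(i)}$, and since the combinator $\cI=\cS\cK\cK\in\mathrm{QP}$ satisfies $p\perp\pi\Rightarrow\cI\perp p\cdot\pi$ (Paragraph~\ref{item:morecombinators}; this is exactly the relation $\cI\perp{}^\perp Q\cdot Q$ already used in the proof of Lemma~\ref{lem:eqpo}), I get $\cI\perp{}^\perp\varphi(i)\leadsto Q$ for every $i$. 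By the preceding Remark this perpendicularity is unchanged if $\leadsto$ is replaced by $\leadsto_\bullet$, so the fixed quasi-proof $\cI$ realizes both entailments and $\iota_I$ is essentially surjective.

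I expect the main --- essentially the only --- obstacle to be the bookkeeping around uniformity: I must make sure the realizer $\cI$ is genuinely independent of $i$ (it is, being a fixed combinator), and that interchanging $\rightarrow$ and $\rightarrow_\bullet$ in the definition of $\vdash$ is harmless, which is precisely the content of the preceding Remark together with Theorem~\ref{item:hatperp=perp}. Once the two component properties are in hand, Lemma~\ref{lem:equiv-iord} assembles them into the claimed equivalence of indexed preorders (and, should it be needed downstream, Remark~\ref{rema:equiv-iord-trip} then carries tripos structure across the equivalence).
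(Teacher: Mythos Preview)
Your proof is correct and follows essentially the same strategy as the paper: reduce via Lemma~\ref{lem:equiv-iord} to checking that each component inclusion is an equivalence, with the substantive step being essential surjectivity witnessed uniformly by the combinator $\cI$. The paper's proof is terser---it simply cites Lemma~\ref{lem:eqpo} for each $I$, leaving implicit what you spell out, namely that the realizer $\cI$ used there is a fixed quasi-proof independent of the argument and hence works uniformly over $i\in I$; the only cosmetic difference is that the paper (via Lemma~\ref{lem:eqpo}) uses the double-perpendicular closure $\overline{\varphi(i)}$ as the approximating element of ${\mathcal P}_\bullet(\Pi)$, whereas you use $\widehat{\varphi(i)}$, but either choice lands in ${\mathcal P}_\bullet(\Pi)$ and the same $\cI$-argument goes through.
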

\begin{proof} By Lemma~\ref{lem:equiv-iord} it suffices to show
that the inclusion $({\mathcal
  P}_{\bullet}(\Pi)^I,\vdash)\hookrightarrow({{\mathcal
    P}}(\Pi)^I,\vdash)$ is an equivalence for all sets $I$ and this is
the content of Lemma \ref{lem:eqpo}.
\end{proof}
%\section{From $\mathcal {AKS}$s to
%$\koca$s} \label{subsection:akstooca}
\item In this Paragraph we prove the equivalence of all the relevant
  tripos constructed previously in this paper.
\begin{rema}\label{def:oca-to-islat} Let 
${\mathcal A}$ be a $\foca$: 
\begin{enumerate}
\item 
The \emph{entailment relation} is defined by $\label{eq:entailment}
\varphi\vdash \psi\Leftrightarrow \exists r\in \Phi\;\forall i\in
I\,\, r(\varphi(i))\leq\psi(i),$ for $\varphi,\psi:I\to A$. An element
$r \in \Phi$ as above is said to be ``a realizer of the entailment
$\varphi\vdash\psi$''. The entailment relation is a preorder on $A^I$,
and $(A^I,\vdash)$ is a meet-semi-lattice with: $\top:I\to A$;
$\top(i)=\top={\ck}$ and $(\varphi\wedge\psi)(i)= \varphi (i) \wedge
\psi (i)$.
\item \label{lem:oca-islat-monot} For any function $f:J\to I$, we
  define map $f^*:(A^I,\vdash)\to (A^J,\vdash),\,\, \varphi\mapsto
  \varphi\circ f$ that preserves meets and is monotonic.
\item \label{lem:oca-islat-func} Define the indexed meet-semi-lattice:
${\bf P}({\mathcal A}):{\bf Set}^{\operatorname{op}}\to{\bf SLat},\,\, I\mapsto
(A^I,\vdash),\,\, f\mapsto f^*$.
\item \label{theo:main} If ${\mathcal A}=
(A,\leq,\rapp,\rimp,\Phi,{\ck},{\cs})$ is a $\ioca$, then
  ${\bf P}(\mathcal A)$ is a tripos.  
\end{enumerate}
The last assertion can be proved in an identical manner than the
corresponding result for a $\ioca$, compare with \cite[Section 5,
  Theorem 5.8]{kn:ocar}.
\end{rema}
%%  In this paragraph, after recalling the construction ${\mathcal K}
%%  \mapsto {\mathcal A}_{{\mathcal K}\bullet}$ appearing in
%%  \ref{item:noadjunctor} that produces a $\ioca$ from an ${\mathcal
%%    {AKS}}$ we show that the indexed preorder associated to the
%%  ${\mathcal {AKS}}$ and the tripos associated to the $\ioca$ are
%%  isomorphic.

%% If we have an ${\mathcal {AKS}}, {\mathcal
%%   K}=(\Lambda,\Pi,\Perp,\mathrm{push},\mathrm{app},
%% \cK,\cS,\mathrm{QP})$ we constructed in item \ref{item:noadjunctor}
%% the $\ioca$,  ${\mathcal A}_{{\mathcal K}\bullet}=(A,\leq,\rapp,\rimp,{\ck},{\cs},\Phi)$ as:
%% \begin{enumerate}
%% \item $(A,\leq)=({\mathcal P}_{\bullet}(\Pi),\supseteq)$;
%% \item ${\rapp}(P,Q)=P \circ_{\bullet} Q ,\,\, {\mathrm{imp}}(P,Q)=P
%%   \rightarrow_{\bullet} Q$;
%% \item $\ck_{\bullet}=\{\cE\cK\}^\perp$, $\cs_{\bullet}=\{\cE((\cB\cE)\cS)\}^\perp$;
%% \item $\Phi=\{P \in {\mathcal P}_{\bullet}(\Pi):\, \exists t\in {\mathrm{QP}}. \,
%% t\perp P\}$.
%% \end{enumerate} 
%% If $a,b \in A$ we write $ab:={\rapp}(a,b)$ and $a \to b:={\rimp}(a,b)$.

%% We recall the following important theorem from \cite{kn:streicher} and
%% write down a short proof for later use. 

% The reader should be aware of
% a slight change of notation that is necessary for precision in the
% theorem below: we write 
% ${\mathcal P}(\mathcal A_\mathcal K)$ instead of ${\mathcal P}(A,\Phi)$ and
% ${\mathcal P}_\bot(\mathcal K)$ instead of ${\mathcal P}_\bot\mathcal K$.

\begin{theo}\label{theo:pakstooca2} Let ${\mathcal K}$  
  and ${\mathcal A}_{{\mathcal K}\bullet}$ be as in Paragraph \ref{item:noadjunctor}.  Then, the
  associated indexed preorders ${\bf P}_{\bullet}({\mathcal K})$
  and ${\bf P}(\mathcal A_{{\mathcal K}\bullet})$ are
  isomorphic.
\end{theo}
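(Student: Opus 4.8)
The plan is to use that the two indexed preorders have, by construction, the \emph{same} object part, namely $I\mapsto \mathcal P_\bullet(\Pi)^I$, and the \emph{same} reindexing maps $f^*=(-)\circ f$. So the candidate isomorphism is simply the family of identity maps $\operatorname{id}_{\mathcal P_\bullet(\Pi)^I}$, for which the compatibility condition \eqref{eq:pseudo} holds on the nose. Everything therefore reduces to showing that, for every set $I$, the two entailment relations on $\mathcal P_\bullet(\Pi)^I$ coincide; once this is done the identity family and its (identical) inverse are both indexed monotonic, which gives the claimed isomorphism of indexed preorders.

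First I would fix a set $I$ and $\varphi,\psi\colon I\to \mathcal P_\bullet(\Pi)$ and unwind the two relations. The relation in ${\bf P}_\bullet(\mathcal K)$ asserts that there is a single $t\in\mathrm{QP}$ with $t\perp \varphi(i)\to_\bullet\psi(i)$ for all $i\in I$; the relation in ${\bf P}(\mathcal A_{\mathcal K\bullet})$ asserts that there is a single $r$ in the filter $\Phi$ of $\mathcal A_{\mathcal K\bullet}$ with $r\circ_\bullet\varphi(i)\supseteq\psi(i)$ for all $i$ (recall that $\leq$ in $\mathcal A_{\mathcal K\bullet}$ is $\supseteq$). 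The two facts that tie these together are: (a) the \emph{full} adjunction between $\circ_\bullet$ and $\to_\bullet$ on $\mathcal P_\bullet(\Pi)$, which is exactly what makes $\mathcal A_{\mathcal K\bullet}$ a $\foca$ (Theorem \ref{theo:akstoocabullet}; see also Theorem \ref{theo:adjunctionbullet} and Paragraph \ref{item:rewritingop}, item \eqref{item:adj}), by which $r\circ_\bullet\varphi(i)\supseteq\psi(i)$ is equivalent to $r\supseteq \varphi(i)\to_\bullet\psi(i)$; and (b) the elementary fact that for $t\in\mathrm{QP}$ one has $t\perp Q\Leftrightarrow Q\subseteq\{t\}^\perp$, where $\{t\}^\perp\in\mathcal P_\perp(\Pi)\subseteq\mathcal P_\bullet(\Pi)$ (Theorem \ref{item:hatperp=perp}\eqref{iitem:hatperp=perp3}), so in particular $\{t\}^\perp\in\Phi$.

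For the direction ${\bf P}_\bullet(\mathcal K)\Rightarrow{\bf P}(\mathcal A_{\mathcal K\bullet})$, given a uniform realizer $t\in\mathrm{QP}$ I would put $r:=\{t\}^\perp\in\Phi$; then $\varphi(i)\to_\bullet\psi(i)\subseteq r$ for every $i$, and (a) turns this into $r\circ_\bullet\varphi(i)\supseteq\psi(i)$ for every $i$. The crucial point is that the same $r$ serves all indices, so uniformity is preserved. For the converse, given $r\in\Phi$ with $r\circ_\bullet\varphi(i)\supseteq\psi(i)$ for all $i$, (a) yields $r\supseteq\varphi(i)\to_\bullet\psi(i)$; choosing $t\in\mathrm{QP}$ with $t\perp r$ (possible by the definition of $\Phi$) and using that perpendicularity is order reversing (Observation \ref{obse:props-perps}) we get $t\perp\varphi(i)\to_\bullet\psi(i)$ for all $i$. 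This is precisely the chain of equivalences already carried out in the proof of Theorem \ref{theo:pakstooca}, now with the single realizer ranging over all $i\in I$.

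I do not expect a genuine obstacle here: the statement is the indexed (uniform) refinement of Theorem \ref{theo:pakstooca}. The only point deserving a line of care is that the realizer translations $t\mapsto\{t\}^\perp$ and $r\mapsto$ ``some $\mathrm{QP}$-element orthogonal to $r$'' are independent of the index $i$, so that a single realizer on one side is transported to a single realizer on the other; this is immediate from their form. Checking that the identity family is an isomorphism of indexed preorders is then pure bookkeeping: both it and its inverse are indexed monotonic by the fibrewise equality of entailment relations just established, and \eqref{eq:pseudo} is trivial because the two reindexing functors literally agree.
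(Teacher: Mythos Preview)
Your proposal is correct and follows essentially the same approach as the paper, which simply observes that the result is a direct consequence of Theorem~\ref{theo:pakstooca}. You have merely unwound that reference explicitly, making visible the uniformity of the realizer translations $t\mapsto\{t\}^\perp$ and $r\mapsto t\in\mathrm{QP}$ with $t\perp r$ across all $i\in I$, which is exactly what lifts the fibrewise isomorphism of Theorem~\ref{theo:pakstooca} to the indexed level.
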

\begin{proof} The proof is a direct consequence of the fact that 
the Heyting preorders ${\mathcal H}_{{\mathcal K}\bullet}$ and
${\mathcal H}_{\mathcal A_{{\mathcal K}\bullet}}$ are isomorphic, see
Theorem \ref{theo:pakstooca}.
%% In both cases the predicates on a set $I$ are functions
%% $\varphi,\psi:I\to {\mathcal P}_{\bullet}(\Pi)$, so we only have to
%% check that the two definitions of entailment coincide. The entailment
%% in ${\bf P}({\mathcal A}_{{\mathcal K}\bullet})$ is given by: $\exists
%% P\in \Phi\;\forall i\in I \,.\, P\varphi(i)\leq\psi(i)$, which using
%% the full adjunction can be formulated equivalently as: $\exists P\in
%% \Phi\;\forall i\in I\,\, P\leq\varphi(i)\to_{\bullet}\psi(i)$.

%% As to the equivalence we have that:
%% \begin{eqnarray*} & \exists P\in \Phi\;\forall i\in I\,\,
%% P\leq\varphi(i)\to_{\bullet}\psi(i)\\ \Leftrightarrow\quad & \exists
%% t\in {\mathrm{QP}}\;\forall i\in I\,\, \{t\}^\perp\supseteq
%% ({}^\perp\varphi(i).\psi(i))^{\widehat{}}\\ \Leftrightarrow\,\, &
%% \exists t\in {\mathrm{QP}}\;\forall i\in I\,\, t\perp
%% ({}^\perp\varphi(i).\psi(i))^{\widehat{}} \\ \Leftrightarrow\quad &
%% \exists t\in {\mathrm{QP}}\;\forall i\in I\,.\,
%% t\perp {}^\perp\varphi(i).\psi(i)\\ \Leftrightarrow\quad & \exists
%% t\in {\mathrm{QP}}\;\forall i\in I\,\, t\perp{} \varphi(i)\rightarrow
%% \psi(i)\\
%% \end{eqnarray*} and the last line is the definition of entailment in
%% ${\bf P}_{\bullet}(\mathcal K)$.
\end{proof}

\begin{theo}\label{theo:equivtripos}
Let ${\mathcal A}$ and ${\mathcal K}_{\mathcal A\bullet}$ be as in
Paragraph \ref{item:inverse}. Then, the associated indexed triposes
${{\bf P}}({\mathcal A})$ and ${{\bf P}}(\mathcal A_{\mathcal K_{\mathcal A\bullet}\bullet})$ are equivalent.
%% Definitions
%%   ~\ref{lem:oca-islat}-\ref{lem:oca-islat-func} and
%%   in~\ref{lem:aks-iord}-\ref{lem:aks-iord-func} respectively).
\end{theo}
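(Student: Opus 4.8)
The plan is to lift the equivalence of Heyting preorders $\mathcal H_{\mathcal A}\simeq\mathcal H_{{\mathcal K}_{\mathcal A\bullet}\bullet}$ established in Theorem \ref{theo:main3} to an equivalence of the corresponding indexed preorders, and then appeal to Remark \ref{rema:equiv-iord-trip} to transport triposhood. By Theorem \ref{theo:pakstooca2}, applied to the $\mathcal{AKS}$ ${\mathcal K}_{\mathcal A\bullet}$, the indexed preorder ${\bf P}(\mathcal A_{\mathcal K_{\mathcal A\bullet}\bullet})$ is isomorphic to ${\bf P}_\bullet({\mathcal K}_{\mathcal A\bullet})$, so it is enough to produce an equivalence of indexed preorders between ${\bf P}_\bullet({\mathcal K}_{\mathcal A\bullet})$ and ${\bf P}({\mathcal A})$. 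For each set $I$ I would take $\rho_I\colon({\mathcal P}_\bullet(A_\Pi)^I,\vdash)\to(A^I,\vdash)$ to be postcomposition with the map $\rho$ of Theorem \ref{theo:galois-inv}, that is $\rho_I(\psi)(i)=\inf\psi(i)$; this is the fibrewise version of the map used in Theorem \ref{theo:main3}, whose weak inverse is postcomposition with $\iota\colon a\mapsto{\uparrow}a$.

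There are three things to verify. First, $\{\rho_I\}_I$ is an indexed monotonic map: the naturality requirement \eqref{eq:pseudo} holds on the nose, because reindexing along $f\colon J\to I$ is precomposition with $f$ on both sides while $\rho_I$ is postcomposition with $\rho$, and these two operations commute. Second, each $\rho_I$ is essentially surjective: given $\varphi\in A^I$, set $\psi:=\iota\circ\varphi$, so that $\psi(i)={\uparrow}\varphi(i)\in{\mathcal P}_\bullet(A_\Pi)$ by Lemma \ref{lem:galois}, \eqref{lem:galois-part}; then $\rho_I(\psi)=\varphi$ exactly, since $\inf({\uparrow}a)=a$. Third --- and this is the step I expect to demand the most care --- $\rho_I$ is order preserving and order reflecting, which I would obtain at once from the biconditional
\[
\psi\vdash\psi'\ \text{in}\ {\bf P}_\bullet({\mathcal K}_{\mathcal A\bullet})
\quad\Longleftrightarrow\quad
\rho_I(\psi)\vdash\rho_I(\psi')\ \text{in}\ {\bf P}({\mathcal A}).
\]

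To prove this biconditional one unwinds both entailment relations. In ${\mathcal K}_{\mathcal A\bullet}$ one has $\mathrm{QP}=\Phi$, $\perp\ =\ \leq$, and $\psi(i)\to_\bullet\psi'(i)=({}^\perp\psi(i)\leadsto\psi'(i))^{\widehat{}}$; so the left-hand side says $\exists t\in\Phi\ \forall i\ \forall s\leq\inf\psi(i)\ \forall\pi\in\psi'(i)\ \ t\leq s\to\pi$, where I have used $t\perp X^{\widehat{}}\Leftrightarrow t\perp X$ together with ${}^\perp\psi(i)={\downarrow}(\inf\psi(i))$ from Lemma \ref{lem:galois}, \eqref{lem:galois-u}. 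Now apply the full adjunction property of the $\foca$ ${\mathcal A}$ (Definition \ref{defi:ioca}) to rewrite $t\leq s\to\pi$ as $ts\leq\pi$; since application is monotone in its second argument and $ts\leq\pi$ for all $\pi\in\psi'(i)$ is the same as $ts\leq\inf\psi'(i)$, the clause collapses to $\exists t\in\Phi\ \forall i\ \ t\inf\psi(i)\leq\inf\psi'(i)$, which is precisely the entailment $\rho_I(\psi)\vdash\rho_I(\psi')$ in ${\bf P}({\mathcal A})$, realised by the very same $t$ --- the uniformity in $i$ being automatic. With the three points in place, Lemma \ref{lem:equiv-iord} shows that $\{\rho_I\}_I$ is an equivalence of indexed preorders (hence of indexed Heyting preorders, the Heyting structure being determined up to isomorphism by the order); composing with the isomorphism of Theorem \ref{theo:pakstooca2} gives ${\bf P}(\mathcal A_{\mathcal K_{\mathcal A\bullet}\bullet})\simeq{\bf P}({\mathcal A})$. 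Finally, ${\mathcal A}$ is in particular a $\ioca$, so ${\bf P}({\mathcal A})$ is a tripos by Remark \ref{def:oca-to-islat}, \eqref{theo:main}, and Remark \ref{rema:equiv-iord-trip} then yields that ${\bf P}(\mathcal A_{\mathcal K_{\mathcal A\bullet}\bullet})$ is a tripos equivalent to it.
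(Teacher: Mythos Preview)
Your proof is correct and follows the same route as the paper, which simply invokes Theorem \ref{theo:main3} for the commutativity of the triangle and declares the tripos equivalence immediate. You make explicit what the paper leaves implicit: that the realizer witnessing $\rho(C)\sqsubseteq\rho(C')\Leftrightarrow C\sqsubseteq_\bullet C'$ in the proof of Theorem \ref{theo:main3} is the \emph{same} element $t=r$ on both sides and does not depend on the particular pair $C,C'$, so the equivalence of Heyting preorders lifts uniformly over $i\in I$ to an equivalence of indexed preorders via Lemma \ref{lem:equiv-iord}.
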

\begin{proof} The equivalence of the triposes follow immediately from the commutativity --up to equivalence-- of the diagram: 

\[\xymatrix{\mathcal{AKS} \ar[rd]_-{\mathcal S}&&\mathcal{{}^{\mathcal
      F}OCA}\ar[ld]^-{\mathcal H}\ar[ll]_{ {\mathcal
        K}_{\mathcal A \bullet}\leftarrow {\mathcal A}}\\ &{\bf
      HPO}&}\] as guaranteed by Theorem \ref{theo:main3}. 
\end{proof}
\end{list}

\end{document}